\newtheorem{thm}{Theorem}[section]
\newtheorem{cor}[thm]{Corollary}
\newtheorem{lem}[thm]{Lemma}
\newtheorem{prop}[thm]{Proposition}
\newtheorem*{thm A}{Theorem A}
\newtheorem*{thm B}{Theorem B}
\newtheorem*{lem:appendix}{Lemma \ref{appendix lemma}}
\theoremstyle{definition}
\newtheorem{defn}[thm]{Definition}
\newtheorem{example}[thm]{Example}
\theoremstyle{remark}
\newtheorem{rem}[thm]{Remark}
\numberwithin{equation}{section}
\renewcommand{\b}{\mathbf}
\newcommand{\fx}[2]{\tensor[_{#1}]{\x}{_{#2}}}
\newcommand{\bb}[1]{\mathbb{#1}}
\newcommand{\D}{\mathbb D}
\newcommand{\II}{\mathbb I}
\newcommand{\JJ}{\mathbb J}
\newcommand{\R}{\mathbb R}
\newcommand{\C}{\mathbb C} 
\renewcommand{\D}{\mathcal{D}}
\newcommand{\Z}{\mathbb Z}
\newcommand{\Tc}{\mathbb T} 
\newcommand{\Lie}{\mathbb{L}\mathrm{ie}}
\newcommand{\tens}{\otimes} 
\newcommand{\dsum}{\oplus} 
\newcommand{\x}{\times}
\newcommand{\iso}{\cong} 
\newcommand{\isoto}{\overset\sim\to}
\newcommand{\toto}{\rightrightarrows}
\renewcommand{\phi}{\varphi} 
\renewcommand{\to}{\longrightarrow}
\newcommand{\lto}{\longleftarrow}
\newcommand{\oto}[1]{\overset{#1}\to}
\newcommand{\into}{\hookrightarrow}
\renewcommand{\mapsto}{\longmapsto}
\newcommand{\actson}{\mathbin{\,\rotatebox[origin=c]{-90}{$\circlearrowright$}\,}}
\newcommand{\ractson}{\mathbin{\,\reflectbox{\rotatebox[origin=c]{-90}{$\circlearrowright$}}\,}}
\newcommand{\comp}{\circ}
\newcommand{\sr}{\mathcal}
\newcommand{\Ann}{\textrm{Ann}} 
\renewcommand{\Re}{\textrm{Re}} 
\newcommand{\del}{\partial}
\newcommand{\pair}[1]{\left\langle #1 \right\rangle}
\renewcommand{\^}{\wedge} 
\renewcommand{\Bar}{\overline}
\renewcommand{\epsilon}{\varepsilon}
\newcommand{\wt}[1]{\widetilde{#1}}
\newcommand{\hide}[1]{} 
\newcommand{\qq}{\mathbin{
  \mathchoice{/\mkern-6mu/}
    {/\mkern-6mu/}
    {/\mkern-5mu/}
    {/\mkern-5mu/}}}
\newcommand{\Id}{\textrm{Id}}
\renewcommand{\Im}{\textrm{Im}}
\newcommand{\longversion}[1]{}
\newcommand{\note}[1]{}
\newcommand{\Inv}{\textrm{Inv}}
\newcommand{\marco}[1]{}
\newcommand{\mike}[1]{}
\title{Integration of generalized complex structures}
\author{
Michael Bailey
\and
Marco Gualtieri
}
\begin{document}
	
\begin{abstract}
We solve the integration problem for generalized complex manifolds, obtaining as the natural integrating object a \emph{weakly holomorphic symplectic groupoid}, which is a real symplectic groupoid with a compatible complex structure defined only on the associated stack, i.e., only up to Morita equivalence.  We explain how such objects \emph{differentiate} to give generalized complex manifolds, and we show that a generalized complex manifold is \emph{integrable} in this sense if and only if its underlying real Poisson structure is integrable.  
Crucial to our solution are several new technical tools which are of independent interest, namely, a reduction procedure for Lie groupoid actions on Courant algebroids, as well as certain local-to-global extension results for multiplicative forms on local Lie groupoids.
Finally, we implement our generalized complex integration procedure in several concrete examples.
\end{abstract}
	
\maketitle

\tableofcontents

\section{Introduction}

A generalized complex structure~\cite{MR2013140, Gualtieri2004,Gualtieri2011} is a differential-geometric structure on a manifold which interpolates between symplectic and complex structures. While it is defined as an integrable complex structure on an exact Courant algebroid, it can be partially described in terms of more familiar geometric structures: roughly speaking, it consists of a foliation by symplectic leaves, together with a complex structure transverse to the leaves. However, this description is misleading: the leaves may degenerate and vary in dimension across the manifold. Indeed, these are the symplectic leaves of a real Poisson structure canonically associated to the generalized complex structure, whose rank may vary. In view of this, the precise nature of the transverse complex structure requires clarification.

Since the introduction of generalized complex structures, the question of the precise relationship between a generalized complex structure and its underlying real Poisson structure has been a subject of a number of works, principally~\cite{CrainicGC} where the problem was first posed and treated in greatest detail, and in~\cite{WeinsteinCLA}, placing the problem in a larger context of complex Lie algebroid theory. In this paper we provide the first complete solution to this problem, which takes the following form.

We show that, subject to an integrability condition, a generalized complex structure with underlying real Poisson structure $P$ may be viewed, equivalently, as an extension of a \emph{Morita equivalent} real Poisson structure $P'$ to a \emph{holomorphic Poisson structure}.  The notion of Morita equivalence of Poisson manifolds was developed by Weinstein and Xu~\cite{weinstein1983,Xu1991-0}.  Roughly speaking, we say that Poisson manifolds $(M',P')$ and $(M,P)$ are Morita equivalent when there is a symplectic manifold $(E,\omega)$ defining a correspondence
\begin{align}\label{span1}
\begin{diagram}[width=2em,height=1.5em]
 & & (E,\omega) & & \\
 & \ldTo^t & & \rdTo^s & \\
 (M',P') & & & & (M,-P)
\end{diagram}
\end{align}
such that $t$ and $s$ are Poisson submersions.  The key point is that a generalized complex structure provides a holomorphic Poisson structure not on $(M,P)$, but on a Morita equivalent $(M',P')$.  Indeed, the manifold $M$ may not even admit an integrable complex structure~\cite{cavalcanti2007}.  $(M',P')$ itself is unique only up to holomorphic Morita equivalence.  Thus, we have a complex structure defined on the Morita equivalence class of $(M,P)$.  This equivalence class is a differential stack, and is a geometric model for the space of symplectic leaves of $P$.  This clarifies the nature of the transverse complex structure mentioned earlier.

To be precise about our notion of Morita equivalence, we use the language of \emph{symplectic groupoids} and \emph{integration}.  Our result may be seen as the solution to the \emph{integration} problem for generalized complex structures, in analogy with the integration of a Poisson manifold to a symplectic groupoid \cite{Coste1987}.  Given a Poisson manifold $(M,P)$, Crainic and Fernandes \cite{crainic2004} characterized the obstruction to the existence of a symplectic groupoid which \emph{differentiates} to $(M,P)$.  If $(M,P)$ is integrable in this sense, it has a unique $s$-connected and $s$-simply-connected integration, the \emph{Weinstein groupoid}.

We show that the integrating object of a generalized complex manifold $(M,\II)$ is a \emph{weakly holomorphic symplectic groupoid}, or \emph{WHSG} (see Definition \ref{WHSG defn}).  This consists of a real symplectic groupoid $G$ equipped with a weak holomorphic symplectic structure, namely, a \emph{symplectic Morita equivalence} (see Definition \ref{morita defn}) between $G$ and the imaginary part of a holomorphic symplectic groupoid $\Phi$.  
(This definition parallels, at the groupoid level, the Morita equivalence of \eqref{span1}.  The holomorphic symplectic groupoid is understood to be unique up to holomorphic symplectic Morita equivalence, hence we denote it ``weak''.)

The Morita equivalence between $G$ and $\Phi$ is a symplectic biprincipal bibundle $\Phi \actson E \ractson G$.  These data determine a \emph{generalized complex reduction} of $E$ by the action of $\Phi$ (see Section \ref{courant reduction section}), which determines a generalized complex structure on $M:=E/\Phi$ whose underlying real Poisson structure is the differentiation of $G$.  This is the \emph{differentiation} of the WHSG.  Our main result says that a generalized complex structure is the differentiation of a WHSG if and only if its underlying real Poisson structure is integrable.  We summarize this as follows:

\begin{thm}\label{summary theorem}
Let $G$ be the Weinstein symplectic groupoid of the Poisson manifold $(M,P)$.  Differentiation of weakly holomorphic symplectic groupoids defines a functor from the category of weak holomorphic symplectic structures on $G$ to the category of generalized complex structures on $M$ with underlying Poisson structure $P$. Furthermore, this functor is an essential equivalence.
\end{thm}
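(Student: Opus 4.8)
The plan is to establish that differentiation is an equivalence of categories by proving it fully faithful and essentially surjective; equivalently, I would construct a quasi-inverse integration functor and check that both composites are naturally isomorphic to the identity. Throughout, the fixed datum is the Weinstein groupoid $G$ of $(M,P)$, which exists precisely because $P$ is integrable, and whose $s$-connected, $s$-simply-connected nature will drive the uniqueness arguments below.

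First I would verify that differentiation is well-defined into the stated target. Given a weak holomorphic symplectic structure on $G$, namely a symplectic biprincipal bibundle $\Phi \actson E \ractson G$ with $\Phi$ the imaginary part of a holomorphic symplectic groupoid, the generalized complex reduction of Section \ref{courant reduction section} yields a generalized complex structure on $M = E/\Phi$. The point requiring care is that its underlying real Poisson structure is exactly $P$: this should follow because the $G$-side of the bibundle is a symplectic realization compatible with $G$ integrating the cotangent Lie algebroid $T^*M$, so the reduced bivector agrees with $P$. Functoriality on morphisms then reduces to checking that morphisms of bibundles descend through the reduction, which is formal once the reduction is seen to be natural.

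The heart of the theorem is the construction of the inverse, which furnishes essential surjectivity and, by uniqueness, the faithfulness below. Starting from a generalized complex structure $\II$ on $M$ with underlying Poisson structure $P$, its $+i$-eigenbundle $L \subset (TM \oplus T^*M)\otimes\C$ is a complex Dirac structure, hence a complex Lie algebroid whose underlying real Lie algebroid is the cotangent Lie algebroid integrated by $G$. The task is to integrate the additional complex structure carried by $L$ to the groupoid level, producing a holomorphic symplectic groupoid $\Phi$ together with a symplectic Morita bibundle $E$ relating $G$ to the imaginary part of $\Phi$. By Lie's second theorem this integration exists over a local groupoid, on which one can build the multiplicative holomorphic symplectic form; globalizing the form over the $s$-simply-connected groupoid $G$ is then achieved through the local-to-global extension results for multiplicative forms promised in the introduction.

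For full faithfulness I would differentiate a morphism of generalized complex structures to a morphism of the infinitesimal complex Dirac data and integrate it back: $s$-simple connectivity of $G$ guarantees that such an infinitesimal morphism integrates uniquely to a morphism of the corresponding weak holomorphic symplectic structures, giving both injectivity and surjectivity on Hom-sets. I expect the main obstacle to be exactly the global integration of the multiplicative holomorphic symplectic form: the infinitesimal complex structure integrates freely on a local groupoid, but passage to the global Weinstein groupoid $G$ can meet a genuine obstruction, and controlling it is the role of the local-to-global extension theorems. A secondary difficulty is tracking the three interlocking pieces --- the real groupoid $G$, the holomorphic groupoid $\Phi$, and the bibundle $E$ --- coherently, so that the natural isomorphisms $D\circ I \cong \Id$ and $I\circ D \cong \Id$ respect the full weakly holomorphic structure and not merely the underlying Poisson geometry.
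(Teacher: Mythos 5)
Your overall architecture (well-definedness, functoriality, uniqueness, and an inverse construction for essential surjectivity) matches the paper's, and your identification of the local-to-global extension of multiplicative forms as a key technical tool is on target. The well-definedness and functoriality steps correspond to Proposition \ref{only if} and Proposition \ref{unique derivative}, and uniqueness is Theorem \ref{unique integration} (though the paper proves it not by integrating an infinitesimal morphism but by directly endowing the composite bibundle $E \comp \Bar{F}$ with a holomorphic symplectic structure pulled back from the generalized complex isomorphism on $M$).

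The genuine gap is in your construction of the integration. You propose to integrate the $+i$-eigenbundle $L$, viewed as a complex Lie algebroid, ``to the groupoid level'' and then globalize the resulting multiplicative holomorphic symplectic form over the $s$-simply-connected groupoid $G$. This cannot work as stated: there is in general \emph{no} global multiplicative holomorphic symplectic form on $G$ (or on any groupoid over $M$), because $\II$ need not admit a global holomorphic gauge --- indeed $M$ need not admit any complex structure at all, and the paper's Hopf surface example exhibits a $\II$ with no global holomorphic Poisson presentation. The obstruction you flag as ``a genuine obstruction'' to globalizing the form is not something the extension lemmas overcome; it is the reason the holomorphic structure must live on a \emph{Morita-equivalent} groupoid over a different base. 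The paper's actual construction requires three ingredients absent from your proposal: (i) Bailey's local normal form (Theorem \ref{Bailey thm} and Corollary \ref{parity corollary}), which supplies a \emph{holomorphic cover} $\{(U_i,B_i)\}$ with $\II|_{U_i} = e^{B_i}\JJ_i e^{-B_i}$ for holomorphic Poisson $\JJ_i$; (ii) the parity reduction of Section \ref{parity reduction section}, stabilizing by symplectic $\R^2$ when the rank of $P$ is $2 \bmod 4$ so that (i) applies; and (iii) the \emph{holomorphic localization} $G_{\sr{U}}$ over $\bigsqcup U_i$, on which the local holomorphic symplectic integrations of the $\JJ_i$ are glued across the off-diagonal components $G_{ij}$ by the multiplicative gauge transforms $t^*B_{ij} - s^*B_{ij}$ of Section \ref{B-field section}, subject to the compatibility condition $B_{ij}I_i + I_i^*B_{ij} + B_{ij}PB_{ij} = 0$. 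Only after this gluing is performed near the \v{C}ech groupoid does the local-to-global extension result (Lemma \ref{appendix lemma}) apply --- and it extends the form over the localization $G_{\sr{U}}$, not over $G$ itself. Without the localization, the map you would be integrating does not exist, so the essential surjectivity argument does not get off the ground.
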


This result has two main components.  First, in Sections \ref{courant reduction section} and \ref{WHSG section} we develop a theory of reductions of Courant algebroids and generalized complex structures by Lie groupoid actions, and we use this to define the differentiation functor on WHSG's.  We prove functoriality of differentiation and uniqueness of the integration.

Second, in Sections \ref{localizations} to \ref{integration} we show essential equivalence, i.e., the existence of integrations. The main task is to construct a WHSG associated to any GC manifold with integrable Poisson structure, and for this the crucial ingredient is Bailey's Theorem \ref{Bailey thm}, which provides a holomorphic normal form in a neighbourhood of any point of a GC manifold.  The particular WHSG's we construct are \emph{holomorphic localizations} of the real symplectic groupoids, discussed in Section \ref{localizations}.

Section \ref{generalized complex section} is a brief introduction to Courant algebroids and generalized complex geometry, with the standard examples and local normal forms.  Section \ref{examples section} gives examples of WHSG's integrating generalized complex structures.

\begin{rem}
For simplicity, we study only structures which integrate to actual globally-defined Lie groupoids.  However, there are notions of Morita equivalence of local Lie groupoids (eg., \cite{debord2001}) which could be adapted to give a notion of \emph{local WHSG}, giving a theory of the local integration of generalized complex structures which are not integrable in the global sense.
\end{rem}

\begin{rem}(Deformation quantization)
Holomorphic symplectic groupoids model stacks with ``shifted symplectic structure.''  Such objects have been studied in \cite{Pantev2013}, and their deformation quantizations in \cite{Calaque2015}.  We expect that the deformation quantization, in this sense, of a WHSG captures an essential aspect of the ``deformation quantization'' (heretofore undefined) of the underlying generalized complex structure.
\end{rem}

\begin{rem}(Generalized complex branes)
The results of this paper provide a way to view \emph{generalized complex branes} 
from a holomorphic point of view.  In a forthcoming paper, we use these techniques to show that generalized complex branes correspond precisely to holomorphic Lagrangians in WHSG's.
\end{rem}

\noindent{\bf Acknowledgments:}
We would like to thank Henrique Bursztyn, Alejandro Cabrera, Marius Crainic, Ezra Getzler, Brent Pym and Alan Weinstein for helpful discussions. This research was supported by an NSERC Discovery Grant.

\section{Courant algebroids and generalized complex structures}\label{generalized complex section}

In this section, we introduce exact Courant algebroids and generalized complex structures, and describe their basic properties.  Most of the material here, and a more thorough introduction to generalized complex geometry, may be found in \cite{Gualtieri2011}, with the exception of the local structure theorem, which is from \cite{bailey2013}.

\begin{defn}
An \emph{exact Courant algebroid} on a manifold $M$ consists of a (real) vector bundle $\bb{E} \to M$, an \emph{anchor map} $\rho : \bb{E} \to TM$, a nondegenerate symmetric blinear pairing $\pair{\cdot,\cdot} : \bb{E} \dsum \bb{E} \to \R \x M$, and a bilinear \emph{Courant bracket} on the space of sections, such that, for the identification $\bb{E} \iso \bb{E}^*$ determined by $\pair{\cdot,\cdot}$, the following sequence is exact,
\begin{align}\label{exact sequence}
0 \to T^*M \oto{\rho^*} \bb{E} \oto{\rho} TM \to 0,
\end{align}
and, for sections $u,v,w \in \Gamma(\bb{E})$ and function $f \in C^\infty(M)$,
\begin{enumerate}
\item $[u,[v,e_3]] = [[u,v],e_3] + [v,[u,e_3]]$,
\item $[u, f v] = (rho(u) \cdot f) v + f [u,v]$,
\item $[u,u] = \tfrac{1}{2} \rho^*d\pair{u,u}$ and
\item $\rho(u) \cdot \pair{v,v} = 2\pair{[u,v],v}$.
\end{enumerate}
In an abuse of notation, we treat $\rho^* : T^*M \into \bb{E}$ as an inclusion $T^*M \subset \bb{E}$.
\end{defn}

Note that the bracket is not skew-symmetric, so that this is not a Lie algebroid.  One can always choose an isotropic splitting, $\nabla : TM \to \bb{E}$, of the sequence \eqref{exact sequence}, which realizes $\bb{E}$ as isomorphic to $TM \dsum T^*M$, with the standard symmetric pairing
\begin{align}
\pair{X + \xi, Y + \eta} = \tfrac{1}{2}(\xi(Y) + \eta(X))
\end{align}
(for $X,Y$ vectors and $\xi,\eta$ covectors), and bracket
\begin{align}\label{bracket formula}
[X+\xi,Y+\eta] = [X,Y]_{Lie} + \Lie_X \eta - \iota_Y d\xi + \iota_Y\iota_X H,
\end{align}
with $H$---the twist---a closed 3-form determined by $\nabla$ as $\iota_Y\iota_X H = 2\nabla^*[\nabla(X),\nabla(Y)]$.  We say that $\nabla$ is a \emph{flat} or \emph{involutive} splitting if $\nabla(TM)$ is closed under the bracket, i.e., if $H=0$.

\begin{defn}
For any closed 3-form we have such a Courant algebroid structure on $TM\dsum T^*M$, and when $H=0$ we call it the \emph{standard Courant algebroid} $\Tc M$.
\end{defn}

\begin{defn}
An isomorphism of Courant algebroids is just a vector bundle isomorphism which respects the structure $\rho$, $\pair{\cdot,\cdot}$ and $[\cdot,\cdot]$.

For $B$ a closed 2-form, the \emph{$B$-field-transform} (or just $B$-transform) $e^B : \bb{E} \to \bb{E}$ is given by
\begin{align}
e^B\, u := u + B(\rho(u)),
\end{align}
which may be expressed in a splitting as $e^B (X + \xi) := X + B(X) + \xi$.  If $B$ is closed, $e^B$ is a Courant automorphism.

A diffeomorphism $\phi : M \to M$ acts on $TM \dsum T^*M$ by pushforward on $TM$ and inverse pullback on $T^*M$, which we denote $\phi_*$.  $\phi_*$ is a Courant automorphism for the standard Courant algebroid.  For a twist $H$, we have that $\phi_* \comp e^B$ is a Courant automorphism if and only if $H - \phi_*(H) = dB$.

Given a splitting, all of the automorphisms of an exact Courant algebroid are generated by diffeomorphisms and $B$-transforms
\end{defn}

\begin{rem}
A $B$-transform corresponds to a change in a choice of splitting, and a non-closed $B$-transform shifts the bracket \eqref{bracket formula} by $\Delta H = dB$.  Any exact Courant algebroid is locally equivalent to the standard one (i.e., with $H=0$), and thus may be constructed by specifying an open cover of $M$ and a \v{C}ech 1-cocycle of closed 2-forms to determine how to glue the standard Courant algebroids on intersections.
\end{rem}

\begin{defn}
A \emph{generalized complex structure} on an exact Courant algebroid $\bb{E} \to M$ is a complex structure, $\II : \bb{E} \to \bb{E},\, \II^2 = -1$, on $\bb{E}$ which is orthogonal with respect to $\pair{\cdot,\cdot}$ and whose $+i$-eigenbundle is Courant--involutive.
\end{defn}

A generalized complex structure has an underlying real Poisson structure, with anchor map
\begin{align}\label{associated real Poisson structure}
P := \rho \comp \II \comp \rho^* \,:\, T^*M \to TM,
\end{align}
which is $B$-transform--invariant.

\begin{example}\label{intro example}
Both symplectic and complex structures may be realized as generalized complex structures in a standard way on $\Tc M$.  If $I : TM \to TM$ is an (integrable) complex structure, and if $\omega : TM \to T^*M$ is a symplectic structure, then
\begin{align}\label{complex and symplectic example}
\II_I &= \begin{pmatrix}
-I & 0 \\
0 & I^*
\end{pmatrix}
\quad\textnormal{and}\quad
\II_\omega = \begin{pmatrix}
0 & \omega^{-1} \\
-\omega & 0
\end{pmatrix}
\end{align}
are generalized complex.  Given a complex structure $I$ and a holomorphic Poisson structure with anchor map $\pi = IP + iP : T_\C^*M \to T_\C M$ ($P$ its imaginary part),
\begin{align}\label{hol Poisson example}
\II_{I,\pi} &= \begin{pmatrix}
-I & P \\
0 & I^*
\end{pmatrix}
\end{align}
is generalized complex.
\end{example}
\begin{rem}\label{splitting remark}
In fact, any generalized complex structure of the form \eqref{hol Poisson example}, i.e., with vanishing $TM \to T^*M$ component, is holomorphic Poisson.  Equivalently, an involutive, isotropic splitting of a Courant algebroid, invariant with respect to a generalized complex structure, determines a holomorphic Poisson structure.
\end{rem}

In \cite{bailey2013}, Bailey showed that
\begin{thm}\label{Bailey thm}
In a small enough neighbourhood of any point, a generalized complex structure is equivalent (up to a choice of splitting) to a product of a symplectic manifold with a holomorphic Poisson manifold whose Poisson tensor vanishes at the point in question.
\end{thm}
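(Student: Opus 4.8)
The plan is to find, in a neighbourhood of the given point $p$, a splitting of the Courant algebroid which is isotropic, involutive (i.e.\ flat, $H=0$) and invariant under $\II$; by Remark \ref{splitting remark} such a splitting exhibits the structure as holomorphic Poisson, and after factoring out the symplectic directions the residual Poisson tensor will vanish at $p$. I would proceed in two stages: first peel off the symplectic factor, then treat the remaining ``rank-zero'' piece.

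\emph{Stage one (splitting off the symplectic leaf).} Let $2k = \rank P(p)$, so that $p$ lies on a $2k$-dimensional symplectic leaf of the underlying Poisson structure $P = \rho \comp \II \comp \rho^*$. The leaf directions are spanned by $\rho(\II \rho^* T^*M)$, and I would build over them an isotropic, $\II$-invariant, involutive sub-bundle of $\bb{E}$ integrating the symplectic model $\II_\omega$ of \eqref{complex and symplectic example}. A Moser-type isotopy, realized through a path of diffeomorphisms and $B$-transforms (the only Courant automorphisms available, by the discussion preceding this theorem), would then identify a neighbourhood of $p$ with a product $(\R^{2k}, \II_\omega) \times (N, \II_N)$, where the second factor is a generalized complex structure whose underlying Poisson tensor $P_N$ vanishes at the image of $p$. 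This reduces the theorem to the case $P(p) = 0$.

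\emph{Stage two (the rank-zero case).} When $P(p) = 0$ the structure is of complex type at $p$: the value $\II_p$ agrees with $\II_{I_p}$ for an honest complex structure $I_p$ on $T_pM$, in the block form of \eqref{hol Poisson example}. The goal becomes to produce, near $p$, an $\II$-invariant involutive isotropic splitting putting $\II$ into holomorphic Poisson form. I would phrase this as a deformation problem for the $+i$-eigenbundle $L \subset \bb{E}_\C$ (maximal isotropic, involutive, with $L \cap \cnj{L} = 0$): write $L$ as a graph of a tensor $\eps$ over the constant-coefficient model $L_p$, so that the splitting condition becomes a nonlinear equation for $\eps$ in which integrability of the transverse complex structure and involutivity of the splitting are coupled, with the linear model ($\eps = 0$) as the starting approximation.

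The main obstacle --- and the genuinely new ingredient --- is that this equation suffers a \emph{loss of derivatives}: the natural linearization controlling the failure of involutivity does not invert boundedly on the same function spaces, so a plain contraction-mapping or implicit-function argument cannot close. I would overcome this with a Nash--Moser fast-convergence scheme: introduce smoothing operators on a tame scale of Banach (Hölder or Sobolev) spaces and construct a sequence of approximate $\II$-invariant involutive isotropic splittings whose error decreases quadratically at each Newton step, the derivative loss being absorbed by the smoothing. Tame estimates for the linearized operator, uniform over a shrinking family of neighbourhoods of $p$, would give convergence to a genuine solution on a (possibly smaller) neighbourhood. Applying Remark \ref{splitting remark} to the resulting splitting yields a holomorphic Poisson structure whose tensor vanishes at $p$, and combining this with Stage one completes the proof.
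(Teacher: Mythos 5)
The paper does not actually prove this theorem\,---\,it imports it from \cite{bailey2013}\,---\,and your proposal reproduces the strategy of that reference in all essentials: first split off the symplectic factor by a generalized Darboux/Moser-type argument to reduce to a point where $P$ vanishes, then recast the search for an $\II$-invariant flat splitting as a deformation problem for the $+i$-eigenbundle and solve the resulting derivative-losing equation by a Nash--Moser fast-convergence scheme on shrinking neighbourhoods. The only detail left implicit, as in any such outline, is that the linearized operator is essentially $\bar\partial$ on the deformation complex, whose tame right inverse on small balls supplies the required estimates.
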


If the symplectic component has real dimension $0 \bmod 4$, then its Darboux coordinates are compatible with some complex structure, and it will be $B$-equivalent to some non-degenerate holomorphic Poisson structure.  Thus, under this parity assumption, both components in Theorem \ref{Bailey thm} are holomorphic, and we may say, more simply, 
\begin{cor}\label{parity corollary}
If a generalized complex structure has real Poisson structure of rank $0 \bmod 4$, then it is locally equivalent to a holomorphic Poisson structure.
\end{cor}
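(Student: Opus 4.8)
The plan is to combine Bailey's local normal form (Theorem \ref{Bailey thm}) with the parity argument indicated just above the corollary: the only work is to show that a symplectic factor whose dimension is divisible by four is $B$-field equivalent to a holomorphic symplectic—hence holomorphic Poisson—structure, after which the product assembles into a holomorphic Poisson structure. First I would apply Theorem \ref{Bailey thm} at the point $p$ in question: in a neighbourhood of $p$, and after a choice of splitting, $(M,\II)$ is a product $(S,\omega)\x(N,I_N,\pi_N)$ of a symplectic manifold with a holomorphic Poisson manifold whose tensor $\pi_N$ vanishes at the $N$-component of $p$. The underlying real Poisson structure of such a product is $\omega^{-1}\dsum\Im\pi_N$, so at $p$, where $\pi_N=0$, its rank is exactly $\dim S$. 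Thus the hypothesis $\rank P\equiv 0 \pmod 4$ at $p$ is precisely the statement that $\dim S=4m$.

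The crux is to realize the symplectic factor as a holomorphic symplectic manifold. Working in Darboux coordinates, $(S,\omega)$ is linearly symplectomorphic to $(\R^{4m},\omega_0)$, and one may pair the $4m$ coordinates into $2m$ complex coordinates so as to exhibit a constant complex structure $I_S$ together with a constant, nondegenerate $(2,0)$-form $\Omega=\sigma+i\omega$ with $\Im\Omega=\omega$; transporting $I_S$ and $\Omega$ back by the symplectomorphism handles general $\omega$. This is possible exactly because the complex dimension $2m$ is even, so that $\C^{2m}$ carries a standard holomorphic symplectic form. This is the only place the parity hypothesis enters, and it is also the main obstacle: when $\dim S\equiv 2\pmod 4$ the complex dimension is odd and no nondegenerate $(2,0)$-form exists, so no such $\Omega$ can be found. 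Since $\Omega$ is constant it is closed, so the resulting holomorphic Poisson structure will be integrable.

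Next I would verify the $B$-field equivalence at the level of $+i$-eigenbundles. The symplectic structure $\II_\omega$ of \eqref{hol Poisson example} (more precisely of the symplectic case) has $+i$-eigenbundle $L_\omega=\set{X+i\,\iota_X\omega}$, so that $e^\sigma L_\omega=\set{X+\iota_X(\sigma+i\omega)}=\set{X+\iota_X\Omega}$. Because $\Omega$ is of type $(2,0)$ we have $\iota_X\Omega=0$ for $X\in T^{0,1}$, whence
\begin{align*}
e^\sigma L_\omega = T^{0,1}\dsum\set{X+\iota_X\Omega : X\in T^{1,0}}.
\end{align*}
Inverting the nondegenerate map $X\mapsto\iota_X\Omega$ from $T^{1,0}$ to $T^{*(1,0)}$ rewrites the second summand as the graph of the holomorphic bivector $\pi_S:=\Omega^{-1}$, so $e^\sigma L_\omega$ is the eigenbundle of the holomorphic Poisson (indeed holomorphic symplectic) structure on $(S,I_S,\pi_S)$ as in \eqref{hol Poisson example}. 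Thus the closed $B$-transform $e^\sigma$ carries $\II_\omega$ to a holomorphic Poisson structure; note that this is consistent with the fact that $P$ is $B$-transform invariant, since $\Im\pi_S=\omega^{-1}$.

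Finally I would assemble the product. After applying $e^\sigma$ on the $S$-factor, $(M,\II)$ is equivalent to the product of holomorphic Poisson generalized complex structures $\II_{I_S,\pi_S}\x\II_{I_N,\pi_N}$. Both factors have vanishing $TM\to T^*M$ component in the block form \eqref{hol Poisson example}, hence so does their product, which is therefore the generalized complex structure of the holomorphic Poisson manifold $(S\x N,\,I_S\x I_N,\,\pi_S\dsum\pi_N)$ by Remark \ref{splitting remark}. This exhibits $(M,\II)$, up to the choice of splitting and a $B$-transform, as locally equivalent to a holomorphic Poisson structure, completing the proof.
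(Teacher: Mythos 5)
Your proposal is correct and follows essentially the same route as the paper: apply Bailey's normal form, observe that the parity hypothesis forces the symplectic factor to have dimension $0 \bmod 4$, realize it in Darboux coordinates as the imaginary part of a constant holomorphic symplectic form, and apply the closed $B$-transform by the real part. The paper leaves these steps as a one-sentence remark preceding the corollary, and your eigenbundle computation fills in exactly the details it omits.
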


\section{Reduction of Courant algebroids by Lie groupoid actions}\label{courant reduction section}
In this section, we describe a general formalism for the action of a Lie groupoid on a Courant aglebroid or generalized complex manifold, generalizing the formalism for Lie groups developed in \cite{Bursztyn2005}.  The actions we consider are ``inner'' in a certain sense, so that we have the data necessary to perform \emph{reduction} of the Courant algebroid/generalized complex structure to the quotient.  All of the examples in this section are used later in the paper, but one should pay special attention to Example \ref{holomorphic symplectic action}, which corresponds to part of the \emph{weakly holomorphic symplectic groupoids} defined in Section \ref{WHSG section}, and whose reduction is the differentiation operation of Theorem \ref{summary theorem}.

We will only briefly recall standard material for Lie groupoids, Lie algebroids and Courant morphisms.  For a review of Lie groupoids and their actions, see \cite{mackenzie2005general} or \cite{MoerdijkMrcun}, and for Dirac relations and Courant morphisms, one can look at \cite{Li-BlandMeinrenken}.  The material from Section \ref{courant action section} onwards is new.

\begin{rem}
Throughout this section, we consider only left actions satisfying the left version of the action law.  All definitions and results can be converted to right actions in a straightforward way.
\end{rem}

\subsection{Lie groupoids}\label{lie groupoid section}

A Lie groupoid $G \toto M$ consists of manifolds $G$ (the \emph{arrows}) and $M$ (the \emph{objects} or the \emph{base} of $G$), together with smooth surjective submersions, $s,t : G \to M$ (the \emph{source} and \emph{target} maps) and $m : G \fx{s}{t} G \to G$ (the multiplication), an identity section $\Id : M \to G$, and an inversion $\Inv : G \to G$, satisfying the groupoid axioms.

A left action of $G$ on a manifold $E$ is given by a \emph{moment map} (in analogy \cite{mikami1988moments} with the moment maps of symplectic reduction), $s : E \to M$, along with a smooth map
\begin{align}
a : G \fx{s}{t} E \to E
\end{align}
satisfying the usual left action law.  We often denote $m(g_1,g_2)$ and $a(g,e)$ by $g_1\cdot g_2$ and $g\cdot e$ respectively, and a left (resp. \emph{right}) action by $G \actson E$ (resp. $E \ractson G$).

A fiber bundle $E \to B$ is a $G$--bundle if $G$ acts on $E$ preserving the fibers over $B$.  $E$ is a \emph{principal} $G$--bundle if
\begin{align}
a \tens \Id : G \fx{t}{s} E \to E \x_B E
\end{align}
is a diffeomorphism.  In other words, for any $e_1,e_2 \in E$, there is at most one $g \in G$ relating them.  Then the quotient, $E / G$, i.e., the space of orbits, is identified with the base $B$.  We note that a groupoid $G$ is itself naturally a left-- and right--principal $G$-bundle.

A multiplicative form, $\omega \in \Omega^\bullet(G)$, on a groupoid $G$ is one for which
\begin{align}
m^*(\omega) = \b{p}_1^*(\omega) + \b{p}_2^*(\omega)
\end{align}
on $G \fx{s}{t} G$.  Similarly, forms $\omega_G \in \Omega^\bullet(G)$ and $\omega_E \in \Omega^\bullet(E)$ are multiplicative for action $a$ if $\omega_G$ is multiplicative and $a^*(\omega_E) = \b{p}_G^*(\omega_G) + \b{p}_E(\omega_E)$.  A \emph{symplectic groupoid} $(G,\omega)$ is a Lie groupoid $G$ with a multiplicative symplectic form $\omega$, and a symplectic groupoid action, $G \actson E$, is a Lie groupoid action respecting symplectic structures on $G$ and $E$.

A Lie algebroid consists of a vector bundle $L \to M$, along with a Lie bracket on $\Gamma(L)$ and an \emph{anchor map} $\rho : L \to TM$, satisfying the Leibniz rule, $[X,fY] = (\rho(X)\cdot f)\, Y + f[X,Y]$.  A Lie groupoid $G$ ``differentiates'' to a Lie algebroid,
\begin{align}
\Lie(G) = T^s G|_\Id = \ker(s_*)|_\Id.
\end{align}
If $G \toto M$ is a symplectic groupoid, then $\Lie(G)$ is identified with $T^*M$, and the anchor map $\rho = t_* : \Lie(G) \to TM$ determines a Poisson structure.  In this case, $t$ is a Poisson map and $s$ is anti-Poisson.

All of the above makes sense in the holomorphic category.

\subsection{Dirac relations and Courant morphisms}\label{Dirac Courant section}

We first describe Dirac relations at the level of linear algebra, and then pass to manifolds.  If $V$ and $W$ are vector spaces with symmetric, nondegenerate bilinear forms of split signature (eg., fibers of Courant algebroids), then a \emph{Dirac relation} from $V$ to $W$ is a maximal isotropic subspace $D \subset W \x \Bar{V}$, where $\Bar{V}$ is $V$ with the opposite bilinear form.  Dirac relations $D : U \to V$ and $D' : V \to W$ can be composed \emph{as relations} in the usual way, with the resulting $D' \comp D : U \to W$ also a Dirac relation.

Given such a $V$, if $K \subset V$ is an isotropic subspace, then the \emph{reduction}
\begin{align}
V \qq K := K^\perp / K
\end{align}
also has a symmetric, nondegenerate split form.  A Dirac relation, $D : V_1 \to V_2$, has a \emph{left kernel} $K_1 = \{v \in V_1 | v \sim_D 0\}$ and a \emph{right kernel} $K_2 = \{v \in V_2 | 0 \sim_D v\}$.  While $D$ may not in general give a map $V_1 \to V_2$, it \emph{does} give an isomorphism of inner-product spaces $V_1 \qq K_1 \isoto V_2 \qq K_2$.

If $\bb{X} \to X$ is an exact Courant algebroid with anchor $\rho : \bb{X} \to TX$ and $S \subset X$ is a submanifold, then we may \emph{reduce} $\bb{X}$ to a Courant algebroid on $S$, i.e.,
\begin{align}
\bb{X}_S := \rho^{-1}(TS) / N^*S.
\end{align}
If $\nabla : TS \into \bb{X}_S$ is an isotropic, involutive splitting, then we say that $(S,\nabla)$ is a \emph{brane} supported on $S \subset X$.  $\nabla$ is equivalently determined by the maximal isotropic subbundle over $S$,
\begin{align}
\nabla(TS) + N^*S \;\subset\; \bb{X}|_S.
\end{align}
If $\bb{X}$ has a generalized complex structure $\II$, then $(S,\nabla)$ is a \emph{generalized complex brane} if this maximal isotropic subbunlde is $\II$--invariant.

If $\bb{E} \to E$ and $\bb{F} \to F$ are exact Courant algebroids, and if $p : E \to X$ and $q : F \to X$ are smooth submersions, then the \emph{Courant fiber product}, $\bb{E} \tensor[_p]{\x}{_q} \bb{F}$, is the reduction of $\bb{E} \x \bb{F}$ to $E \fx{p}{q} F \subset E \x F$, in the sense above.

If $\bb{E} \to E$ and $\bb{F} \to F$ are exact Courant algebroids, and $f : E \to F$ is a smooth map, then a \emph{Courant morphism} $\phi : \bb{E} \to \bb{F}$ covering $f$ is given by by an isotropic, involutive splitting
\begin{align}
\nabla_\phi : T(F \fx{}{f} E) \to \bb{F} \fx{}{f} \Bar{\bb{E}},
\end{align}
i.e., a brane structure on $F \fx{}{f} E \subset F \x E$, where $F \fx{}{f} E$ is the graph of $f$ and $\bb{F} \fx{}{f} \Bar{\bb{E}}$ is the corresponding Courant fiber product.  If $\bb{F}$ and $\bb{E}$ are equipped with generalized complex structures, we say that $\phi$ is a \emph{generalized complex morphism} (or \emph{generalized holomorphism}) if $\nabla_\phi$ is a generalized complex brane.  Courant morphisms (resp. generalized holomorphisms) may be composed by composing them, at each point, as Dirac relations.  Such compositions are always smooth Courant morphisms (resp. generalized holomorphisms).

\begin{rem}
If the Courant algebroids are split, i.e., if $\bb{E} = \Tc E$ and $\bb{F} = \Tc F$, then $f$ lifts to a canonical Courant morphism which respects the given splitting, and any other Courant morphism covering $f$ will be the standard one precomposed with a $B$-transform on $E$.
\end{rem}

\subsection{Lie groupoid actions on Courant algebroids}\label{courant action section}

\begin{defn}\label{courant action}
Let $G \toto M$ be a Lie groupoid, let $\bb{E} \to E$ be an exact Courant algebroid.  Then a \emph{Courant action} of $G$ on $\bb{E}$ with moment map $t : E \to M$ consists of a Courant morphism
\begin{align}
\tilde{a} : \Tc G \tensor[_s]{\x}{_t} \bb{E} \to \bb{E}
\end{align}
covering some Lie groupoid action $a : G \fx{s}{t} E \to E$, and satisfying the Courant version of the usual action law, namely,
\begin{align}\label{Courant action law}
\tilde{a} \comp (\Id_{\Tc G} \tens \tilde{a}) = \tilde{a} \comp (\tilde{m} \tens \Id_\bb{E}),
\end{align}
where $\tilde{m}$ is the standard lift of the Lie groupoid product to $\Tc G$.
\end{defn}

One could generalized this further, replacing $\Tc G$ with an arbitrary exact Courant algebroid on $G$ equipped with a multiplicative structure.  However, in such generality, we do not have enough data to do a reduction of $\bb{E}$, which is our goal.  For this, we need something like an ``inner action,'' which amounts to the additional data of a multiplicative splitting of the Courant algebroid on $G$, which means we just have $\Tc G$.  Hence Definition \ref{courant action}.

\begin{example}\label{standard Courant action}
	Any Lie groupoid action $a : G \tensor[_s]{\x}{_t} E \to E$ lifts in a canonical way to an action on the standard Courant algebroid $\Tc E$.
\end{example}

\begin{example}\label{B-transformed standard action}
	Given a Courant action $\tilde{a}$ of $G$ on $\bb{E}$, and a closed multiplicative 2-form $B$ on $G$, $B$ pulls back to $\b{p}_G^*(B)$ on $G \fx{s}{t} E$, and its $B$-transform precomposes with $\tilde{a}$ to give a new Courant action.
\end{example}

\subsection{Reduction}\label{reduction section}

Suppose $\tilde{a} : \Tc_g G \fx{s}{t} \bb{E} \to \bb{E}$ is a Courant action.  Given $g \in G$, we may view the standard splitting $s_g : T_gG \into \Tc_gG$ as a Dirac relation $s_g : 0 \to \Tc_gG$.  Then, if $e_1,e_2 \in E$ such that $g\cdot e_1 = e_2$, we have a Dirac relation
\begin{align}
\tilde{g} &: \bb{E}_{e_1} \to \bb{E}_{e_2} \notag \\
\tilde{g}&= \tilde{a}_{g,e_1} \comp (s_g \tens \Id). 
\end{align}
That is, $v_1 \sim_{\tilde{g}} v_2$ whenever there is some $u \in T_gG$ such that $(u,v_1) \sim_{\tilde{a}} v_2$.  $\tilde{g}$ has both a ``left'' and ``right'' kernel,
\begin{align}
K_{e_1}^\ell := \{v_1 \in \bb{E}_{e_1} | v_1 \sim_{\tilde{g}} 0\}
\quad \textnormal{and} \quad
K_{e_2}^r := \{v_2 \in \bb{E}_{e_2} | 0 \sim_{\tilde{g}} v_2\}.
\end{align}
As a consequence of the Courant action law, \eqref{Courant action law},
\begin{prop}
Neither $K_{e_1}^\ell$ nor $K_{e_2}^r$ depends on $g$, and $K^\ell = K^r$ as subbundles of $\bb{E}$.
\end{prop}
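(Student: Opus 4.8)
The plan is to promote the pointwise definition of $\tilde g$ to a \emph{multiplicative} family of Dirac relations, so that both kernels become governed entirely by the unit relations $\tilde 1_x$, which turn out to be idempotent and hence symmetric. First I would establish the composition law $\widetilde{g_2 g_1} = \tilde g_2 \comp \tilde g_1$ for composable $g_1,g_2$. Unwinding definitions, $v_1 \sim_{\tilde g_2\comp\tilde g_1} v_3$ means there are $u_1\in T_{g_1}G$, $u_2\in T_{g_2}G$ and an intermediate $v_2$ with $((u_1,0),v_1)\sim_{\tilde a} v_2$ and $((u_2,0),v_2)\sim_{\tilde a} v_3$. Since $\tilde m$ is the standard split lift of $m$, one has $\tilde m((u_2,0),(u_1,0)) = (m_*(u_2,u_1),0)$, so the Courant action law \eqref{Courant action law} rewrites this composite as $((m_*(u_2,u_1),0),v_1)\sim_{\tilde a} v_3$; as $m_*\colon T(G\fx{s}{t}G)\to T_{g_2g_1}G$ is surjective, the witnesses $m_*(u_2,u_1)$ exhaust $T_{g_2g_1}G$, recovering exactly the relation $\widetilde{g_2g_1}$. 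This is the technical heart of the whole argument.

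Taking $g_1=g_2=1_x$ and using $1_x\cdot 1_x = 1_x$ then shows each $\tilde 1_x\colon\bb E_e\to\bb E_e$ (for $t(e)=x$) is an \emph{idempotent} Dirac relation. Here I would invoke a short linear-algebra lemma: an idempotent Dirac relation has equal left and right kernels. Indeed, the standard behaviour of kernels under composition of Dirac relations forces the left kernel of $D\comp D$ to equal the left kernel of $D$ only when the left kernel is contained in the right kernel, and dually; idempotency therefore gives both inclusions, hence equality. This produces a well-defined subbundle $K$ with $K_e := (\text{left kernel of }\tilde 1_{t(e)}) = (\text{right kernel of }\tilde 1_{t(e)})$.

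The hard part will be an \emph{inversion compatibility}: for $g\cdot e_1 = e_2$, the left kernel of $\widetilde{g^{-1}}$ at $e_2$ coincides with the right kernel $K^r_{e_2}$ of $\tilde g$. To see $K^r_{e_2}\subseteq(\text{left kernel of }\widetilde{g^{-1}})$, I would take $v_2$ with $((u,0),0)\sim_{\tilde a} v_2$, choose $u'\in T_{g^{-1}}G$ with $m_*(u',u)=0$, and apply \eqref{Courant action law} to the composable pair $(g^{-1},g)$: the composite collapses to the action of the zero vector over $1_{t(e_1)}$, which by the unit behaviour of $\tilde a$ is the identity relation, yielding $((u',0),v_2)\sim_{\tilde a} 0$, i.e.\ $v_2$ lies in the left kernel of $\widetilde{g^{-1}}$; the reverse inclusion is symmetric, using the pair $(g,g^{-1})$. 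I expect this to be the main obstacle, because \eqref{Courant action law} encodes only associativity, so relating the kernels computed from $g$ and from $g^{-1}$ requires feeding in the groupoid identities $g^{-1}g = g g^{-1} = 1$ together with the unit axiom for $\tilde a$, all while carefully tracking the multivalued (relational) nature of $\tilde a$ and the fact that the chosen $u'$ exists because $m$ is a surjective submersion.

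Finally I would assemble the pieces. From $\widetilde{g^{-1}}\comp\tilde g = \tilde 1_{t(e_1)}$ and the composition-kernel formula, the left kernel of the right-hand side equals the left kernel $K^\ell_{e_1}$ of $\tilde g$, since the inclusion supplied by the crux makes the correction term vanish; hence $K^\ell_{e_1} = K_{e_1}$, independent of $g$. Applying the same to $g^{-1}$ acting on $e_2$ identifies the left kernel of $\widetilde{g^{-1}}$ with $K_{e_2}$, and the crux then gives $K^r_{e_2} = K_{e_2}$, again independent of $g$. Since both $K^\ell_e$ and $K^r_e$ coincide with the single subbundle $K_e = \ker\tilde 1_{t(e)}$, we conclude $K^\ell = K^r$. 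Smoothness as a subbundle follows from smoothness of $\tilde a$ and of the identity section, the fibre rank being that of the infinitesimal action of $\Lie(G)$ along the orbit.
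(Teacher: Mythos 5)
Your step 1 --- the multiplicativity $\widetilde{g_2g_1}=\tilde g_2\comp\tilde g_1$, derived from the action law \eqref{Courant action law} together with the fact that $\tilde m$ is the standard split lift of the submersion $m$ --- is correct and is surely the intended content of the paper's remark that the proposition is ``a consequence of the Courant action law'' (the paper records no further proof). However, two of your remaining steps have genuine gaps. First, the linear-algebra lemma in step 2 is false: an idempotent Dirac relation need \emph{not} have equal left and right kernels. Take $V=\R^2$ with the split pairing $\pair{e_1,e_2}=1$, $\pair{e_i,e_i}=0$, and let $D=\R e_1\x\R e_2\subset V\x\Bar V$ (output factor first). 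This is maximal isotropic, one checks directly that $D\comp D=D$, yet the left kernel is $\R e_2$ and the right kernel is $\R e_1$. Since this $D$ even satisfies every multiplicative identity one could ask of a unit ($D=D^2$, $D$ its own ``inverse''), no amount of purely relational bookkeeping with the composition--kernel formulas can establish $K^\ell=K^r$; some input beyond multiplicativity is unavoidable.

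That missing input is hidden in your step 3 under the phrase ``the unit behaviour of $\tilde a$.'' Definition \ref{courant action} imposes only the associativity-type law \eqref{Courant action law}, with no unit axiom, so this must be \emph{derived}; and note that $\tilde a_{1_x,e}\comp(s_{1_x}\tens\Id)=\tilde 1_x$ is certainly not the identity relation (its kernel is the orbit direction). What is true, and what your argument actually needs, is that $\{w : ((0,0),0)\sim_{\tilde a}w\}=0$. This follows not from the action law but from $\tilde a$ being a Courant morphism covering $a$: its graph support forces vectors to transform by $a_*$ and covectors by $a^*$, and injectivity of $a^*$ (as $a$ restricted to $G\fx{s}{t}E$ is a surjective submersion) kills the covector part. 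The same fact is needed to repair a second defect in step 3: composition of relations only asserts the \emph{existence} of an intermediate element, so your chain produces \emph{some} $v_2'$ with $((u,0),0)\sim_{\tilde a}v_2'$ and $((u',0),v_2')\sim_{\tilde a}0$, not a priori your given $v_2$; linearity plus the vanishing of $\{w:((0,0),0)\sim_{\tilde a}w\}$ shows the intermediate over a fixed $u$ is unique, which closes this. (Also, the element $u'$ with $m_*(u',u)=0$ exists only because the relevant $u$ automatically satisfies $s_*u=0$, which is worth saying.) With these repairs your step 3, applied to $g=1_x=1_x^{-1}$, already yields $\ker^\ell(\tilde 1_x)=\ker^r(\tilde 1_x)$, making the false lemma of step 2 unnecessary; but as written the proof does not go through.
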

\note{details?}
Therefore we denote both $K^\ell$ and $K^r$ as the \emph{reduction kernel} $K \subset \bb{E}$.  Since $TG \subset \Tc G$ is isotropic and involutive and $\tilde{a}$ is a Courant morphism,
\begin{prop}
$K$ is isotropic and involutive.
\end{prop}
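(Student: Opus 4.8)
\emph{The plan} is to deduce both statements from the general principle that the left and right kernels of \emph{any} Courant morphism are isotropic and involutive, and then to apply this principle to the composite Dirac relation $\tilde{g} = \tilde{a}_{g,e_1} \comp (s_g \tens \Id)$ that defines $K$.

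\emph{Isotropy} is pure linear algebra and uses nothing beyond the definition of a Dirac relation. Writing $\tilde g \subset \bb{E}_{e_2} \x \Bar{\bb E}_{e_1}$ for the maximal isotropic subspace, the kernels are $K^r_{e_2} = \{v : (v,0) \in \tilde g\}$ and $K^\ell_{e_1} = \{v : (0,v) \in \tilde g\}$. For $(0,v),(0,v') \in \tilde g$ the isotropy of $\tilde g$ gives $0 = \pair{(0,v),(0,v')} = -\pair{v,v'}_{\bb{E}_{e_1}}$, so $K^\ell$, and likewise $K^r$, is isotropic; since $K^\ell = K^r = K$ by the preceding proposition, $K$ is isotropic.

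The real work is in \emph{involutivity}. First I would verify that $\tilde g$ is genuinely a Courant morphism: the standard splitting $s_g : T_gG \into \Tc_g G$ presents $TG \subset \Tc G$ as a maximal isotropic, Courant--involutive subbundle, i.e. a Courant morphism $0 \to \Tc G$, so $s \tens \Id$ is a Courant morphism; composing with $\tilde a$ and invoking that compositions of Courant morphisms are again Courant morphisms (established above), $\tilde g$ is a Courant morphism, whose graph $D$ is an involutive isotropic subbundle of the relevant product Courant algebroid. It then suffices to show the kernel of a Courant morphism is involutive. Given smooth sections $v,v'$ of $K$, I would lift them to sections $(0;\, s(U), v)$ and $(0;\, s(U'), v')$ of $D$, with $U,U'$ sections of $TG$ chosen so that $(s(U),v) \sim_{\tilde a} 0$ pointwise; smoothness of such lifts follows from the smoothness of the Dirac relation $\tilde a$, which makes $K$ and its lift smooth subbundles. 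The Courant bracket on a product decomposes as a direct sum over the factors, and the zero section of $\bb{E}$ brackets to zero (immediate from \eqref{bracket formula}), so
\begin{align}
[(0;\, s(U), v),\, (0;\, s(U'), v')] = \bigl(0;\; [s(U), s(U')]_{\Tc G},\; [v,v']_{\bb{E}}\bigr)
\end{align}
as a section of the product. Because $s$ is involutive, $[s(U),s(U')]_{\Tc G} = s([U,U'])$ again lies in $TG$; because $D$ is involutive, the right-hand side is a section of $D$. Reading off components gives $(s([U,U']),\,[v,v']) \sim_{\tilde a} 0$, i.e. $[v,v'] \sim_{\tilde g} 0$ witnessed by $[U,U']$, so $[v,v']$ is a section of $K$, proving involutivity.

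\emph{The main obstacle} is the bookkeeping in this last step: one must ensure the fiberwise kernel data assemble into \emph{smooth} sections so that the Courant bracket is defined, and one must track three compatibilities at once --- involutivity of the graph of $\tilde a$, involutivity of $TG \subset \Tc G$, and the vanishing of zero-section brackets --- while using the preceding proposition to pass from the fiberwise relations $\tilde g$ to the global subbundle $K \subset \bb{E}$. The Courant action law \eqref{Courant action law} enters only indirectly, through that proposition, which guarantees $K$ is well defined independently of $g$ and that $K^\ell = K^r$; this lets the computation above be carried out along the identity bisection and still conclude involutivity of $K$ everywhere.
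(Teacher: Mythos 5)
Your proof is correct and follows exactly the route the paper intends: the paper offers no written proof beyond the remark that the proposition follows ``since $TG \subset \Tc G$ is isotropic and involutive and $\tilde{a}$ is a Courant morphism,'' and your argument is a faithful, detailed unpacking of precisely that one-line justification (isotropy of the kernel from maximal isotropy of the Dirac relation, involutivity from the brane/involutivity property of the graph together with involutivity of the standard splitting of $\Tc G$).
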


\subsubsection{Infinitesimal action}
A classical Lie groupoid action $a : G \tensor[_s]{\x}{_t} E \to E$ determines a Lie algebroid, $t_E^*(\Lie(G))$, on $E$, whose anchor, $da : t_E^*(\Lie(G)) \to TE$, is the \emph{infinitesimal action}.  With a Courant action $\tilde{a}$, there is a surjective map of Lie algebras
\begin{align}
d\tilde{a} : t_E^*(\Lie(G)) \to K
\end{align}
covering $da$.  $d\tilde{a}$ is given as follows.  We identify $\Lie(G)$ with $T^sG|_\Id = \ker(s_*)|_\Id$.  Suppose that $g = \Id_x \in G$, $u \in T_g^sG$ and $e \in E$.  Then, since $\tilde{a}$ is a Courant morphism, it relates $(u,0) \in T_gG \x \bb{E}_e$ to a unique $d\tilde{a}_e(u) \in \bb{E}$.

\subsubsection{Courant algebroid on the quotient}
As we explained in Section \ref{Dirac Courant section}, $\tilde{g}$ determines an isomorphism from $K^\perp_{e_1} / K_{e_1}$ to $K^\perp_{e_2} / K_{e_2}$, and thus $G$ acts on $\bb{E}_{red} := K^\perp / K$.  Then the data of a Courant algebroid pass to the quotient, and
\begin{prop}
If $G \actson E$ is principal, then $\bb{E} \qq G := \bb{E}_{red} / G = (K^\perp / K) / G$ is an exact Courant algebroid on $E/G$.
\end{prop}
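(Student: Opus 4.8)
The plan is to verify, one structure at a time, that each ingredient of the exact Courant algebroid on $E$ descends through the two-step quotient, first from $K^\perp$ to $\bb{E}_{red} = K^\perp/K$ and then from $\bb{E}_{red}$ to $\bb{E}_{red}/G$, yielding an exact Courant algebroid over $B := E/G$. Since $G \actson E$ is principal, the orbit map $q : E \to B$ is a principal $G$-bundle, in particular a surjective submersion, and the infinitesimal action $da$ is fibrewise injective. Combined with the surjection $d\tilde{a} : t_E^*(\Lie(G)) \to K$ covering $da$, together with the anchor compatibility $\rho \comp d\tilde{a} = da$ (a consequence of $\tilde{a}$ being a Courant morphism over $a$), this shows $\rho$ restricts to an isomorphism $\rho|_K : K \isoto \ker(q_*)$; in particular $\rho(K) = \ker(q_*)$ has constant rank, $K \cap \ker\rho = 0$, and $K$, $K^\perp$, $\bb{E}_{red}$ are honest vector bundles over $E$.

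\emph{Vector bundle and pairing.} The relations $\tilde{g}$ assemble into a principal $G$-action on the total space of $\bb{E}_{red}$ covering $G \actson E$, so descent for principal bundles gives that $\bb{E} \qq G = \bb{E}_{red}/G$ is a smooth real vector bundle over $B$. Because each $\tilde{g} : K^\perp_{e_1}/K_{e_1} \isoto K^\perp_{e_2}/K_{e_2}$ is an isometry of inner-product spaces, the fibrewise pairing on $\bb{E}_{red}$ is $G$-invariant and descends to a nondegenerate symmetric pairing of split signature on $\bb{E} \qq G$.

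\emph{Anchor and exactness.} Since $q_* \comp \rho : K^\perp \to q^*TB$ annihilates $K$ (as $\rho(K) = \ker(q_*)$) and is $G$-equivariant, it descends to an anchor $\bar\rho : \bb{E} \qq G \to TB$, which is surjective because $\rho$ and $q_*$ are. To identify its kernel I would use $\pair{\rho^*\xi, k} = \tfrac12 \xi(\rho(k))$, which shows $T^*E \cap K^\perp = \rho^*(\Ann\ker(q_*))$, the conormal bundle of the orbits. A short diagram chase then gives, for $v \in K^\perp$, that $\bar\rho([v]) = 0$ exactly when $v \in K + \rho^*(\Ann\ker(q_*))$; since $\rho^*(\Ann\ker(q_*)) \cap K \subseteq \ker\rho \cap K = 0$, this identifies $\ker\bar\rho$ with $\rho^*(\Ann\ker(q_*)) \iso q^*T^*B$, which descends to $T^*B$. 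This produces the exact sequence $0 \to T^*B \to \bb{E} \qq G \to TB \to 0$, compatible with the descended pairing via $\bar\rho^*$.

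\emph{Bracket (the main obstacle).} The delicate point is that the Courant bracket is not skew and carries a derivative term, so it does not descend along $K$ for arbitrary lifts. My plan is to identify sections of $\bb{E} \qq G$ over $B$ with $G$-invariant sections of $\bb{E}_{red}$, represented by $u \in \Gamma(K^\perp)$ whose class modulo $K$ is $G$-invariant, and to set $[\bar{u}, \bar{v}] := \overline{[u,v]}$. Two facts must be established. First, isotropy and involutivity of $K$ force $[\Gamma(K^\perp), \Gamma(K^\perp)] \subseteq \Gamma(K^\perp)$; this is the standard consequence of the polarized form $\rho(u)\pair{v,w} = \pair{[u,v],w} + \pair{[u,w],v}$ of axiom (4) together with $K^{\perp\perp} = K$. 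Second, well-definedness modulo $K$ requires that altering $u$ or $v$ by a section of $K$ change $[u,v]$ only within $K$; here $G$-invariance is essential, since for invariant $v,w$ the function $\pair{v,w}$ is basic and hence killed by $\rho(k) \in \ker(q_*)$ for $k \in \Gamma(K)$, which is precisely what makes the offending derivative terms vanish. Granting these, the reduced bracket of invariant sections is again invariant and $K^\perp$-valued modulo $K$, so it descends to $B$, and the axioms (1)--(4) and the Leibniz rule then hold for $\bar\rho$ and $[\cdot,\cdot]$ because each is an identity between operations preserved under restriction to $K^\perp$ and passage to the quotient. I expect the careful bookkeeping of the non-skew bracket within the invariant-section formalism to be the main technical burden; the remaining descent statements are routine once the vanishing of orbit-direction derivatives is in place.
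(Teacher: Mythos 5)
Your overall architecture is sound and goes well beyond the paper, which offers no proof of this proposition: the text simply asserts that ``the data of a Courant algebroid pass to the quotient,'' implicitly deferring to the group-case reduction formalism of \cite{Bursztyn2005}. Your descent of the bundle, the pairing, the anchor, and the exactness of the sequence over $E/G$ (via $T^*E\cap K^\perp=\rho^*\mathrm{Ann}(\ker q_*)$ and $\rho^*\mathrm{Ann}(\ker q_*)\cap K=0$) are all correct and are exactly the bookkeeping one would want to see written down.

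There is, however, one genuinely false intermediate claim in the bracket step. The closure $[\Gamma(K^\perp),\Gamma(K^\perp)]\subseteq\Gamma(K^\perp)$ is \emph{not} a consequence of isotropy and involutivity of $K$ together with $K^{\perp\perp}=K$. Polarizing axiom (4) gives, for $u,v\in\Gamma(K^\perp)$ and $k\in\Gamma(K)$, the identity $\pair{[u,v],k}=\rho(u)\pair{v,k}-\pair{v,[u,k]}=-\pair{v,[u,k]}$, and to conclude that this vanishes for all $v\in\Gamma(K^\perp)$ you need $[u,k]\in\Gamma(K^{\perp\perp})=\Gamma(K)$, i.e.\ precisely the \emph{invariance} of $u$ along $K$, which is the hypothesis you are deferring to your second fact. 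A concrete counterexample on $\Tc\R^2$: take $K=\mathrm{span}(\del_x)$, which is isotropic and involutive, so that $K^\perp=\mathrm{span}(\del_x,\del_y,dy)$; then $u=x\,dy$ and $v=\del_y$ lie in $\Gamma(K^\perp)$, but $[x\,dy,\,\del_y]=-\iota_{\del_y}(dx \^ dy)=dx$, and $\pair{dx,\del_x}=\tfrac12\neq 0$, so $[u,v]\notin\Gamma(K^\perp)$. The repair is local and consistent with the rest of your argument: work from the outset only with sections $u\in\Gamma(K^\perp)$ satisfying $[\Gamma(K),u]\subseteq\Gamma(K)$ (a condition independent of the choice of lift modulo $K$, by involutivity of $K$, and equivalent to $G$-invariance of the class $[u]$ when the relevant fibres are connected); for two such sections the displayed computation does close, and your second fact then gives well-definedness modulo $K$. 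With that restriction imposed from the start, the proof goes through.
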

This determines a Courant morphism $\bb{E} \to \bb{E} \qq G$ with kernel $K$.

\subsection{Invariant splittings}\label{invariant splittings section}

In many cases, we have a more concrete description of the reduced algebroid.  Given a Lie groupoid action, $a : G \fx{t}{s} E \to E$, a flat splitting $\nabla : TE \into \bb{E}$ determines, together with the standard splitting of $\Tc G$, a Courant lift of $a$, 
\begin{align}
\tilde{a}_\nabla : \Tc G \fx{t}{s} \bb{E} \to \bb{E},
\end{align}
as in Example \ref{standard Courant action}.  We say that $\nabla$ is \emph{invariant} with respect to this action.  In particular, this implies that the reduction kernel $K$ is contained in $\nabla(TE)$.  We note that a Courant action may admit more than one invariant splitting on $\bb{E}$, i.e., more than one splitting may determine the same $\tilde{a}$.

$\nabla$ determines an equivalence $\bb{E} \iso \Tc E$, and this identifies the reduction $\bb{E}\qq G$ with $\Tc (E / G)$.  A different invariant splitting, $\nabla'$, differs from $\nabla$ by a $B$-transform vanishing on $K$, i.e., a $B$-field basic over $E/G$, and the identifications ${\bb{E} \qq G \iso_\nabla \Tc (E/G)}$ and ${\bb{E} \qq G \iso_{\nabla'} \Tc (E/G)}$ are related by a $B$-transform by precisely this basic $B$-field.

\subsection{Generalized complex actions}

Let $\tilde{a} : \Tc G \tensor[_s]{\x}{_t} \bb{E} \to \bb{E}$ be a Courant action, with reduction kernel $K \subset \bb{E}$ as defined in Section \ref{reduction section}.

\begin{defn}\label{g hol action}
If $\bb{E}$ is endowed with a generalized complex structure $\II_E$, then $\tilde{a}$ is \emph{generalized holomorphic} if $\II_E K = K$ (so that $\II_E$ passes to a complex structure $\II_{red}$ on $\bb{E}_{red}$), and the induced $G$-action on $\bb{E}_{red}$ respects $\II_{red}$.
\end{defn}

These data and this condition are sufficient to define a reduced generalized complex structure on the reduced Courant algebroid on the quotient $E/G$.  However, in general it is hard to know whether a given Courant action is generalized holomorphic, which leads us to consider a stronger notion of generalized complex action, involving the additional data of a GC structure on $G$:

\begin{defn}\label{gc action}
A \emph{generalized complex action} of a Lie groupoid consists of a Courant groupoid action $\tilde{a} : \Tc G \tensor[_s]{\x}{_t} \bb{E} \to \bb{E}$, where $\Tc G$ and $\bb{E}$ are endowed with generalized complex structures $\II_G$ and $\II_E$, such that the multiplication of $G$ is generalized complex, the standard splitting $T G \into \Tc G$ is $\II_G$--invariant, and $\tilde{a}$ is a generalized complex morphism.
\end{defn}

The condition that $TG \subset \Tc G$ be $\II_G$--invariant is a fairly strong condition, but, as we said earlier, the splitting of $\Tc G$ is an essential part of the data of an ``inner action'' that allows us to do reduction; therefore it is natural to ask for it to be compatible with the generalized complex structure.  As per Remark \ref{splitting remark}, a generalized complex structure with an invariant splitting is actually holomorphic Poisson.  Thus, the ``most natural'' objects by which to perform reduction on a generalized complex manifold are themselves holomorphic.

\begin{prop}\label{gc action is g hol}
A generalized complex action in the sense of Definition \ref{gc action} is generalized holomorphic in the sense of Definition \ref{g hol action}, and thus determines a reduced generalized complex structure on the quotient space $E/G$.
\end{prop}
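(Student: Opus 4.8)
The plan is to verify, pointwise over each arrow, the two conditions that comprise Definition \ref{g hol action}: that the reduction kernel $K$ satisfies $\II_E K = K$, and that the induced $G$--action on $\bb{E}_{red} = K^\perp/K$ respects the descended structure $\II_{red}$. Both will fall out once I establish that, for every $g \in G$ with $g\cdot e_1 = e_2$, the Dirac relation $\tilde g : \bb{E}_{e_1} \to \bb{E}_{e_2}$ built in Section \ref{reduction section} is itself a \emph{generalized complex} Dirac relation; the remaining content is then elementary linear algebra of such relations, and the global/smooth statements are supplied by the reduction formalism already developed.

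First I would show $\tilde g$ is generalized complex. Recall $\tilde g = \tilde a_{g,e_1}\comp(s_g \tens \Id)$. Viewed as a Dirac relation $0 \to \Tc_g G$, the standard splitting $s_g$ has maximal isotropic equal to its image $s_g(T_gG)$; by the hypothesis in Definition \ref{gc action} that the splitting $TG \into \Tc G$ is $\II_G$--invariant, this image is $\II_G$--invariant, so $s_g$ is a generalized complex relation. The identity relation on $\bb{E}_{e_1}$ is trivially generalized complex, hence so is $s_g \tens \Id$. Since $\tilde a$ is a generalized complex morphism by hypothesis, and since compositions of generalized holomorphisms are again generalized holomorphisms (Section \ref{Dirac Courant section}), the composite $\tilde g$ is a generalized complex Dirac relation $\bb{E}_{e_1} \to \bb{E}_{e_2}$.

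Next I would record the behaviour of the kernels and the induced isomorphism of any generalized complex Dirac relation $D : V \to W$. The complex structure on the opposite space $\Bar V$ is again $\II_V$ (reversing the pairing does not affect orthogonality), so $\II$--invariance of $D \subset W \x \Bar V$ means stability under $\II_W \dsum \II_V$. If $v$ lies in the left kernel, i.e. $(0,v)\in D$, then $(0,\II_V v) = (\II_W\dsum\II_V)(0,v)\in D$, so $\II_V v$ is again in the left kernel; the kernel is therefore $\II_V$--invariant, and symmetrically on the right. Taking $D = \tilde g$ and using $K = K^\ell$ yields $\II_E K = K$, the first condition. Furthermore the induced isomorphism $\psi : V\qq K_V \isoto W \qq K_W$ sends $[v]\mapsto[w]$ exactly when $(w,v)\in D$, so invariance of $D$ gives $(\II_W w, \II_V v)\in D$, i.e. $\psi(\II_V^{red}[v]) = \II_W^{red}\psi([v])$. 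Applied to $\tilde g$, whose induced isomorphism is by construction the action of $g$ on $\bb{E}_{red}$, this is precisely the statement that the $G$--action respects $\II_{red}$, the second condition. With both conditions of Definition \ref{g hol action} in hand, $\tilde a$ is generalized holomorphic, and the reduction machinery following that definition then produces the reduced generalized complex structure on $E/G$.

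The bookkeeping is routine, and I expect the only genuinely delicate point to be fixing the sign convention for the generalized complex structure on $\Bar V$ (pinned down, for instance, by requiring the diagonal relation to be generalized complex) so that the kernel--invariance and intertwining computations are consistent. Smoothness of $K$ as a subbundle and descent of $\II_{red}$ to $E/G$ require no new work, as they are furnished by the reduction formalism of Sections \ref{reduction section} and \ref{Dirac Courant section}; similarly, the fact that the family $\{\tilde g\}$ is an honest $G$--action (so that "respects $\II_{red}$" is meaningful) rests on the multiplicativity hypotheses of Definition \ref{gc action} already used to construct $\bb{E}_{red}$ as a $G$--space.
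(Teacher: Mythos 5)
Your proposal is correct and follows essentially the same route as the paper: exhibit $\tilde g$ as the composition of the generalized complex relations $s_g\tens\Id$ and $\tilde a_{g,e_1}$, then read off the $\II$--invariance of $K$ and the intertwining property of the induced isomorphism on $K^\perp/K$. The only difference is that you spell out the elementary linear algebra of generalized complex Dirac relations (kernel invariance and equivariance of the induced map) that the paper states without proof.
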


\begin{proof}
As in Section \ref{reduction section}, at points $g \in G,\, e_1,e_2=g\cdot e_1 \in E$, we have a Dirac relation
\begin{align}
\tilde{a}_{g,e_1} : \Tc_g G \x \bb{E}_{e_1} \to \bb{E}_{e_2}
\end{align}
as well as the Dirac relation $0 \to \Tc_g G$ given by the standard splitting, $\nabla$, of $\Tc G$.  By hypothesis, these relations are generalized complex, thus their composition,
\begin{align}
\tilde{g} = \tilde{a}_{g,e_1} \comp \nabla \tens \Id : \bb{E}_{e_1} \to \bb{E}_{e_2},
\end{align}
will be as well.  Thus, $K$ and $K^\perp$ are $\II$--invariant, and $\tilde{g}$ determines an $\II_{red}$--invariant isomorphism from $\bb{E}_{red,e_1}$ to $\bb{E}_{red,e_2}$.
\end{proof}

\begin{example}\label{holomorphic symplectic action}
If $(G,\Omega = B+i\omega_G)$ is a holomorphic symplectic groupoid acting on a symplectic bundle $(E,\omega_E)$, such that the action is symplectic for $\omega_G$ and $\omega_E$, then there is a natural lift of this action to a generalized complex action:

One can check that the standard lift of $a : G \tensor[_s]{\x}{_t} E \to E$ to a Courant action $\tilde{a} : \Tc G \tensor[_s]{\x}{_t} \Tc E \to \Tc E$ is a generalized complex morphism for the standard structures $\II_{\omega_G}$ and $\II_{\omega_E}$.  As-is, $TG$ is not $\II_{\omega_G}$--invariant.  Therefore, as in Example \ref{B-transformed standard action}, we modify the action and the generalized complex structure by applying the multiplicative B-transform $e^B$ to $(G,\II_{\omega_G})$, to give a modified action $\tilde{a}_B$ and generalized complex structure $\II_\Omega := e^B\II_{\omega_G} e^{-B}$.  Now we are in the holomorphic Poisson case of Example \ref{intro example}, and $TG \subset \Tc G$ \emph{is} $\II_\Omega$--invariant.  $\tilde{a}_B$ is generalized complex for $(G,\II_\Omega)$ and $(E,\II_{\omega_E})$.

If $G \actson E$ is principal, then this reduction gives a generalized complex structure $\II_B$ on $B := E/G$.  In this case, it is reasonable to call the reduction, $(\bb{E},\II_{\omega_E}) \to (\bb{E} \qq G, \II_B)$, a \emph{symplectic resolution} of $\II_B$.
\end{example}

\begin{example}\label{holomorphic on holomorphic action}
As a sub-case of Example \ref{holomorphic symplectic action}, we suppose that, not only is $G$ holomorphic symplectic, but so is $E$, with holomorphic symplectic forms $\Omega_G= B_G + i\omega_G$ and $\Omega_E = B_E + i\omega_E$ respectively, and that the action $G \actson E$ is holomorphic as well as being symplectic.

$B_E$ gives a nonstandard, $G$--invariant splitting (in the sense of Section \ref{invariant splittings section}), $S_E := e^{-B_E} TE \subset \Tc E$, for which $\II_{\omega_E} S_E = S_E$ and $K \subset S_E$.  This allows us to identify the reduction with $NK \dsum N^*K \,/\, G = \Tc (E/G)$.  Under this identification, it is easy to show:
\begin{prop}\label{hol reduction is hol poisson pushforward}
The reduction of a holomorphic symplectic manifold $E$ by the principal action of a holomorphic symplectic groupoid $G$, as described in Example \ref{holomorphic symplectic action}, is canonically isomorphic to the generalized complex structure coming from the holomorphic Poisson pushforward of $E$ to $E/G$.
\end{prop}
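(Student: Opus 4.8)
The plan is to make both the generalized complex structure on $E$ and its reduction manifestly of holomorphic Poisson type using the invariant splitting $S_E = e^{-B_E}TE$ from Example \ref{holomorphic on holomorphic action}, and then to match the descended complex and Poisson data against the pushforward. First I would record the linear algebra underlying the assertion (made in Example \ref{holomorphic on holomorphic action}) that $S_E$ is $\II_{\omega_E}$--invariant: conjugating the symplectic--type structure by $e^{B_E}$ gives, in the standard splitting,
\[
e^{B_E}\,\II_{\omega_E}\,e^{-B_E} = \begin{pmatrix} -\omega_E^{-1}B_E & \omega_E^{-1} \\ -(\omega_E + B_E\omega_E^{-1}B_E) & B_E\omega_E^{-1} \end{pmatrix}.
\]
Since $\Omega_E = B_E + i\omega_E$ is a holomorphic (i.e.\ $(2,0)$) form for the complex structure $I_E$, equivalently $B_E = \omega_E I_E$, one checks using the skewness of $B_E$ and $I_E^2 = -1$ that the bottom--left entry vanishes and the matrix reduces exactly to the holomorphic Poisson form \eqref{hol Poisson example}, with complex structure $I_E$ and imaginary Poisson part $P = \omega_E^{-1}$. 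By Remark \ref{splitting remark} this exhibits $(\bb E, \II_{\omega_E})$, in the splitting $S_E$, as the holomorphic Poisson manifold $(E, \pi_E)$ with $\pi_E = \Omega_E^{-1}$.

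Next I would push this description through the reduction of Example \ref{holomorphic symplectic action}. As noted there, $S_E$ is $G$--invariant and contains the reduction kernel $K$, so it descends to an invariant splitting of $\bb E \qq G$; by the discussion of Section \ref{invariant splittings section} this identifies $\bb E \qq G$ with $\Tc(E/G)$ in such a way that the descended splitting is precisely $T(E/G) \subset \Tc(E/G)$. Because $S_E$ is $\II_{\omega_E}$--invariant, the reduced structure $\II_{red}$ preserves $T(E/G)$, so Remark \ref{splitting remark} applies a second time: $\II_{red}$ is holomorphic Poisson, determined by a complex structure $\Bar I$ and a holomorphic Poisson tensor $\Bar\pi$ on $E/G$.

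It then remains to identify $(\Bar I, \Bar\pi)$ with the pushforward data. The complex structure $\Bar I$ is the one for which the principal projection $E \to E/G$ is holomorphic, which exists precisely because the action is assumed holomorphic; this is the same complex structure underlying the holomorphic Poisson pushforward. For the Poisson part, I would argue that the imaginary part of $\Bar\pi$ is the real Poisson structure underlying $\II_{red}$, which---being $B$--invariant in the sense of \eqref{associated real Poisson structure}---is the ordinary Poisson (symplectic) reduction of $\omega_E^{-1} = \mathrm{Im}(\pi_E)$ along $E \to E/G$. Since a holomorphic Poisson structure is determined by its complex structure together with its imaginary part via $\pi = \Bar I\,P + iP$, matching both data forces $\Bar\pi$ to be the holomorphic Poisson pushforward of $\pi_E$, giving the claimed canonical isomorphism.

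The hard part will be the last step: verifying that the reduction--theoretic descent of the real Poisson tensor genuinely coincides with the Poisson pushforward along $E \to E/G$---a standard but not purely formal fact about symplectic reduction by a symplectic groupoid---and that this real descent, together with the descended complex structure $\Bar I$, assembles into the holomorphic pushforward rather than some other holomorphic Poisson structure with the same imaginary part. The latter is guaranteed by $\II_{red}$ being an honest generalized complex structure, which is what makes the imaginary-part-plus-complex-structure reconstruction valid.
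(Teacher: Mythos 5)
Your proposal is correct and follows essentially the same route as the paper, which merely sets up the invariant splitting $S_E = e^{-B_E}TE$ (noting it is $G$--invariant, $\II_{\omega_E}$--invariant, and contains $K$), identifies the reduction with $\Tc(E/G)$, and declares the rest ``easy to show.'' Your explicit computation that $e^{B_E}\II_{\omega_E}e^{-B_E}$ takes the holomorphic Poisson form \eqref{hol Poisson example} via $B_E = \omega_E I_E$, and the subsequent matching of the descended complex structure and real Poisson part with the pushforward data, is exactly the verification the paper leaves implicit.
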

\end{example}

\section{Weakly holomorphic symplectic groupoids and their differentiation}\label{WHSG section}

In this section, we define the integrating objects that are the focus of this paper, and we show that they ``differentiate'' to generalized complex manifolds.  This differentiation is the functor referenced in Theorem \ref{summary theorem}.  We show that, if a generalized complex manifold has an integration, then it is unique up to topology.  It follows that the functor is 1-1 on equivalence classes.  Later, in Section \ref{integration}, we will show that it is also surjective on equivalence classes for which the real Poisson structure is integrable.

\begin{defn}\label{morita defn}
Recall \cite{MoerdijkMrcun} that a \emph{Morita equivalence} of Lie groupoids $G \toto M$ and $H \toto N$ is a \emph{biprincipal bibundle}, $G \actson E \ractson H$ between them, i.e., $G$ acts principally on $E$ from the left and $H$ acts principally from the right such that the quotient $E \to E/G = N$ is the moment map of $E \ractson H$ and the quotient $E \to E/H = M$ is the moment map of $G \actson E$, and such that the $G$ and $H$ actions commute.  The diagram to keep in mind is this:
\begin{align}
\begin{diagram}[h=2em,w=1.5em,p=.5em]
	G & \actson & E & \ractson & H \\
	\dTo <{\raisebox{2ex}{t}} \dTo >{\raisebox{2ex}{s}} & \ldTo_t & & \rdTo_s & \dTo <{\raisebox{2ex}{t}} \dTo >{\raisebox{2ex}{s}} \\
	M & & & & N
\end{diagram}
\end{align}
We say that the Morita equivalence is \emph{symplectic} if $G$ and $H$ are symplectic groupoids, $E$ is a symplectic manifold, and $G$ and $H$ act symplectically (see Section \ref{lie groupoid section}) on $E$.
\end{defn}

Morita equivalences $G \actson E \ractson G'$ and $G' \actson F \ractson G''$ may be \emph{composed} to give a Morita equivalence
\begin{align}
E \comp F = \tfrac{E \tensor[_s]{\x}{_t} F}{G'}
\end{align}
between $G$ and $G''$.  If the Morita equivalences are holomorphic and/or symplectic, so will be their composition \cite{Xu1991}.  Morita equivalences form a weak 2-category, with 2-morphisms being just equivalences of bibundles.  In this category, a Morita equivalence $G \actson E \ractson H$ may be inverted to $H \actson \Bar{E} \ractson G$ by taking the inverse actions of $G$ and $H$, and taking the inverse symplectic structure on $E$.

As we said, the integrating object we will consider is a symplectic Lie groupoid with a ``stacky complex structure'', i.e., a holomorphic structure up to Morita equivalence:
\begin{defn}\label{WHSG defn}
	A \emph{weakly--holomorphic symplectic groupoid} (or \emph{WHSG}) consists of a (real) symplectic groupoid $(G,\omega_G)$, equipped with a \emph{weak holomorphic extension}, namely, a symplectic groupoid $\Phi \toto X$ with symplectic form $\Omega = B + i\omega_\Phi$ and a (real) symplectic Morita equivalence $\Phi \actson E \ractson G$ between $(G,\omega)$ and the imaginary part, $(\Phi,\omega_\Phi)$, of $\Phi$.  We also call $\Phi$ a \emph{holomorphic atlas} for $G$.
	
	To complete the definition, we must specify the notion of equivalence for these objects.  An \emph{equivalence} between two weakly holomorphic symplectic groupoids $\Phi \actson E \ractson G$ and $\Psi \actson F \ractson H$ consists of a symplectic groupoid isomorphism between $G$ and $H$ and a \emph{holomorphic} symplectic Morita equivalence $\Phi \actson Q \ractson \Psi$ such that the resulting diagram weakly commutes (as real symplectic Morita equivalences):
	\begin{diagram}[height=2em,width=1em]
		\Phi & & & \rTo^Q  & & & \Psi \\
		& \rdTo_E & & & & \ldTo_F & \\
		& & & G \iso H & & &
	\end{diagram}
\end{defn}

\begin{defn}
We specify how a weakly holomorphic symplectic groupoid, ${\Phi \actson E \ractson G}$, \emph{differentiates} to a generalized complex structure on $M$, the base of $G$.  The action $\Phi \actson E$ is precisely the case described in Example \ref{holomorphic symplectic action}, of a holomorphic symplectic groupoid on a real symplectic manifold.  As in the example, this determines a generalized complex action of $\Phi$ on $(E,\II_{\omega_E})$, which reduces to a generalized complex structure, $\II_M$, on $M = E/\Phi$.   $(M,\II_M)$ is the \emph{derivative} and, conversely, we say that $\Phi \actson E \ractson G$ \emph{integrates} $(M,\II_M)$.
\end{defn}

Of course, we can forget the complex structure on $\Phi$ and just take $\Phi \actson E \ractson G$ as a Morita equivalence in the real symplectic category, with the quotient $E \to M$ inducing a Poisson structure on $M$.  In a symplectic Morita equivalence, $G$ integrates this Poisson structure \cite{Xu1991-0}.  Therefore:
\begin{prop}\label{only if}
If a WHSG differentiates to $(M,\II)$, then the real symplectic groupoid integrates the underlying real Poisson structure of $\II$.
\end{prop}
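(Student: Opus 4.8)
The plan is to unwind the definition of differentiation and observe that everything needed is already contained in the real symplectic structure, independent of the holomorphic extension $\Phi$. First I would note that the differentiation procedure, as defined, produces the generalized complex structure $\II_M$ on $M=E/\Phi$ via the reduction of Example \ref{holomorphic symplectic action}. By the construction in that example, the underlying real Poisson structure of $\II_M$ (as defined in \eqref{associated real Poisson structure}) is precisely the Poisson structure obtained by the \emph{real} symplectic reduction of $(E,\omega_E)$ by the $\Phi$-action, since the associated Poisson structure $P=\rho\comp\II\comp\rho^*$ is $B$-transform--invariant and hence insensitive to the holomorphic modification $\II_{\omega_G}\mapsto\II_\Omega$ used in the example. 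In other words, forgetting the complex structure, the reduction $(E,\II_{\omega_E})\to(\bb{E}\qq\Phi,\II_M)$ underlies the ordinary symplectic reduction of $E$ by $\Phi$.

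The key step is then to invoke the structure of the symplectic Morita equivalence $\Phi\actson E\ractson G$ directly. As observed in the remark following the differentiation definition, forgetting the complex structure on $\Phi$ leaves us with a Morita equivalence in the real symplectic category. The quotient map $E\to E/\Phi=M$ is the moment map of the right $G$-action, and the standard theory of symplectic Morita equivalences \cite{Xu1991-0} asserts that in such a correspondence, the symplectic groupoid $G$ integrates the Poisson structure on $M$ induced by the quotient $E\to M$. Thus I would cite this result to conclude that $G$ integrates the Poisson structure on $M$ arising from the real symplectic reduction.

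To finish, I would simply match the two Poisson structures: the Poisson structure that $G$ integrates (via the Morita correspondence) and the underlying real Poisson structure of $\II_M$ (via generalized complex differentiation) are both computed as the pushforward of the real symplectic structure on $E$ along $E\to M$, hence they coincide. Therefore $G$ integrates the underlying real Poisson structure of $\II_M$, which is exactly the claim.

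The main obstacle is ensuring that the two notions of ``induced Poisson structure on $M$''—one coming from the generalized complex reduction in Example \ref{holomorphic symplectic action}, the other from the classical symplectic Morita equivalence of \cite{Xu1991-0}—genuinely agree. This is essentially a compatibility check: one must verify that the real Poisson structure extracted from the reduced generalized complex structure $\II_M$ coincides with the classical Poisson quotient of the symplectic manifold $E$. I expect this to follow cleanly from the $B$-transform invariance of the associated Poisson structure together with the explicit description of the reduction, but it is the point where the argument requires genuine care rather than formal bookkeeping.
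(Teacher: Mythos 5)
Your proposal is correct and follows essentially the same route as the paper: forget the complex structure on $\Phi$, view $\Phi \actson E \ractson G$ as a real symplectic Morita equivalence, and invoke \cite{Xu1991-0} to conclude that $G$ integrates the Poisson structure induced on $M$ by the quotient $E \to M$. The compatibility check you flag (that this quotient Poisson structure agrees with the real Poisson structure underlying the reduced $\II_M$) is left implicit in the paper, and your justification via the $B$-transform invariance of $P = \rho\comp\II\comp\rho^*$ is the right reason it holds.
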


\subsection{Uniqueness of derivative}

Since we have a fairly weak notion of equivalence between WHSG's, we should check that differentiation to generalized complex structures respects this equivalence.  This is the functoriality referred to in Theorem \ref{summary theorem}.

\begin{prop}\label{unique derivative}
If weakly holomorphic symplectic groupoids $\Phi \actson E \ractson G$ and $\Psi \actson F \ractson G$ are equivalent via holomorphic Morita equivalence $\Phi \actson D \ractson \Psi$, then $D$ determines an isomorphism between their generalized complex derivatives, and this is functorial.
\end{prop}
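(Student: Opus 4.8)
The plan is to reduce the statement to a single structural fact---that composing the holomorphic atlas on the left with a holomorphic symplectic Morita equivalence does not change the derivative---and then to feed in the weak commutativity of the equivalence diagram. First I would unpack Definition \ref{WHSG defn}: the holomorphic Morita equivalence $\Phi \actson D \ractson \Psi$ together with the weak commutativity of the triangle supplies an isomorphism $E \iso D \comp F$ of \emph{real} symplectic $\Phi$--$G$ bibundles. Since the derivative of $\Phi \actson E \ractson G$ is, by construction (Example \ref{holomorphic symplectic action} and the differentiation definition), the reduction of $(\Tc E, \II_{\omega_E})$ by the generalized complex action of $\Phi$, and this reduction is built canonically from the bibundle data, any such bibundle isomorphism induces an isomorphism of the reduced generalized complex structures over $M$. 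Thus it suffices to compare the derivative of the composite WHSG $\Phi \actson (D \comp F) \ractson G$ with the derivative of $\Psi \actson F \ractson G$.

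The core step is a \emph{reduction in stages} argument carried out on $W := D \fx{s}{t} F$, the fiber product over the base of $\Psi$. On $W$ there are three commuting principal actions: $\Phi$ on the $D$ factor, $\Psi$ diagonally, and $G$ on the $F$ factor, and one has $W/\Psi \iso D\comp F$ as $\Phi$--$G$ bibundles and $W/\Phi \iso F$ as $\Psi$--$G$ bibundles, the further quotient being $M$ in either case. I would equip $W$ with the generalized complex structure built from $(D,\omega_D)$ and $(F,\omega_F)$, where $\omega_D = \Im \Omega_D$, and reduce by $\Phi \times \Psi$ in the two possible orders. Reducing by $\Phi$ first: because $D$ is holomorphic symplectic and $\Phi$ acts holomorphically, this is precisely the situation of Example \ref{holomorphic on holomorphic action}, so by Proposition \ref{hol reduction is hol poisson pushforward} the $\Phi$--reduction collapses the $D$--factor and returns $(\Tc F, \II_{\omega_F})$; the subsequent $\Psi$--reduction is then, by definition, the derivative of $\Psi \actson F \ractson G$. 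Reducing by $\Psi$ first returns $(\Tc(D\comp F), \II_{\omega_{D\comp F}})$, whose $\Phi$--reduction is the derivative of the composite WHSG. Equating the two orders produces the desired canonical isomorphism of derivatives over $M$.

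Finally, functoriality would follow from the associativity and holomorphic symplectic compatibility of composition of Morita equivalences \cite{Xu1991}, combined with the canonicity of the reduced structure: the isomorphisms induced by successive equivalences $D$ and $D'$ compose to the one induced by $D' \comp D$, and the identity equivalence induces the identity. I expect the main obstacle to be twofold. First, one must establish reduction in stages for two commuting Lie groupoid actions on a Courant algebroid, which is not literally among the single-action reductions of Section \ref{reduction section}; the natural route is to show that the two composite Dirac relations assembled from the $\Phi$-- and $\Psi$--actions agree pointwise, using the composition of Dirac relations from Section \ref{Dirac Courant section}. Second, one must track the multiplicative $B$--transforms and invariant splittings of Example \ref{holomorphic symplectic action} through the composition; these match precisely because $D$ is a \emph{holomorphic} symplectic Morita equivalence, so its form $\Omega_D$ intertwines $\Omega_\Phi$ and $\Omega_\Psi$ and hence the nonstandard invariant splitting of Example \ref{holomorphic on holomorphic action} descends compatibly on both factors.
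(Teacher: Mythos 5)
Your proposal is correct in outline but takes a genuinely different route from the paper. The paper's proof builds the \emph{linking groupoid} $\Phi *_D \Psi = \Phi \sqcup \Psi \sqcup D \sqcup \Bar{D}$ and lets it act on $E \sqcup F$; the single Courant reduction by this larger groupoid simultaneously contains the reductions of $\Tc E$ by $\Phi$ and of $\Tc F$ by $\Psi$, and the equivariance of the reduced Courant algebroid under the extra components $D, \Bar D$ is what furnishes the isomorphism of derivatives. You instead reduce the matter to the composite $\Phi \actson (D \comp F) \ractson G$ and run a reduction-in-stages argument on the Courant fiber product over $W = D \fx{s}{t} F$, commuting the $\Phi$- and $\Psi$-reductions. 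This is very much in the spirit of the paper's own proof of Theorem \ref{unique integration} (which uses exactly such a ``two reductions of one Courant algebroid'' argument on $\Tc E \x_M \Bar{\Tc F}$), and it has the concrete advantage of sidestepping the caveat the paper must make when $\dim\Phi \neq \dim\Psi$, since $W$ is a manifold regardless. What the linking-groupoid approach buys in exchange is that functoriality and the role of the weak-commutativity isomorphism $\alpha$ are packaged cleanly into the groupoid structure of $\Phi *_D \Psi$, whereas you must separately invoke canonicity of the reduction under bibundle isomorphism and associativity of composition.

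One step deserves more care than you give it. You cite Proposition \ref{hol reduction is hol poisson pushforward} to say the $\Phi$-reduction of $W$ ``returns $(\Tc F, \II_{\omega_F})$,'' but that proposition applies to a \emph{holomorphic} symplectic total space, and $W$ is not holomorphic symplectic (only its $D$-factor is). What you actually need is that the invariant splitting $e^{-(B_D,0)}$ of Example \ref{holomorphic on holomorphic action}, restricted to the fiber product, contains the reduction kernel of the $\Phi$-action and induces on $\Tc W \qq \Phi$ the \emph{standard} splitting of $\Tc F$ rather than one shifted by a nontrivial basic $B$-field on $F$; any residual shift would change $\II_{\omega_F}$ and break the comparison with the derivative of $\Psi \actson F \ractson G$. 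This is checkable (the non-basic part of $(B_D,0)$ is absorbed along the kernel directions), but it is the genuine content of your second ``obstacle'' and should be carried out explicitly, together with the pointwise agreement of the two composite Dirac relations that underlies reduction in stages.
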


\begin{proof}
Representing the bibundles as arrows, the equivalence can be represented in the following diagram:
\begin{align*}
\begin{diagram}[width=1.5em,height=1em]
\Phi & & \rTo^D & & \Psi \\
& \rdTo(2,5)_E & & \ldTo(2,5)_F & \\
& & \ldImplies(2,1)^\alpha & & \\
& & & & \\
& & & & \\
& & G & &
\end{diagram}
\end{align*}
where $D$ is a holomorphic symplectic Morita equivalence between $\Phi$ and $\Psi$, and $\alpha : F \comp D \isoto E$ is the isomorphism of symplectic bibundles which exhibits the weak commutativity of the diagram.

$\Phi *_D \Psi := \Phi \sqcup \Psi \sqcup D \sqcup \Bar{D}$ has a natural groupoid structure and, if $\Phi$ and $\Psi$ are of the same dimension, it is a holomorphic symplectic Lie groupoid.  In this case, the actions $\Phi \actson E$ and $\Psi \actson F$, along with $\alpha : F \comp D \to E$ and $\bar{\alpha} := (\alpha^{-1},\Id) : E \comp \bar{D} \to F \comp D \comp \bar{D} = F$, determine a (symplectic) action of $\Phi *_D \Psi$ on $E \sqcup F$.  This reduces to a generalized complex structure on the base of $G$, which must be the same as the reductions of each of $E$ and $F$ individually, which are thus equivalent to each other.

If $\Phi$ and $\Psi$ are not of the same dimension then, strictly speaking, $\Phi *_D \Psi$ and $E \sqcup F$ are not even manifolds, but the same relevant property will hold; namely, that the reduced Courant algebroids $\bb{E}_{red} \sqcup \bb{F}_{red}$ are $\Phi *_D \Psi$--equivariant.

If $\Phi'' \actson D' \ractson \Phi'$ and $\Phi' \actson D \ractson \Phi$ are holomorphic symplectic Morita equivalences, then functoriality of this construction follows from the associativity of $\Phi *_{D'\comp D} \Phi''$.
\end{proof}

\subsection{Uniqueness of integration}

\begin{thm}\label{unique integration}
Suppose that $G \toto M$ is a real symplectic groupoid and $\Phi \actson E \ractson G$ and $\Psi \actson F \ractson G$ are weakly holomorphic symplectic groupoids, such that their derivatives are equivalent as generalized complex structures.  Then they are equivalent as WHSG's.
\end{thm}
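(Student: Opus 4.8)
\subsection*{Proof proposal}

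The plan is to produce, from the generalized complex isomorphism of the two derivatives, a holomorphic symplectic Morita equivalence $\Phi \actson D \ractson \Psi$ together with a symplectic groupoid automorphism of $G$, and to check the weak commutativity demanded by Definition \ref{WHSG defn}. The natural candidate for the \emph{underlying real} bibundle is the composition $D := \Bar{F} \comp E$, which is a real symplectic Morita equivalence $\Phi \actson D \ractson \Psi$ between the imaginary parts $(\Phi,\omega_\Phi)$ and $(\Psi,\omega_\Psi)$. With the composition convention of Proposition \ref{unique derivative}, one has $F \comp D = (F \comp \Bar{F}) \comp E \iso G \comp E \iso E$, so the weak commutativity square closes up on the nose at the level of real symplectic Morita equivalences; the entire content of the theorem is therefore to \emph{upgrade the complex structure}, i.e. to show that this $D$ is in fact a holomorphic symplectic Morita equivalence.

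First I would normalize the hypothesis. The given generalized complex isomorphism $\chi \colon (M,\II_M^\Phi) \isoto (M,\II_M^\Psi)$ covers a Poisson automorphism of the common underlying Poisson manifold $(M,P)$ and differs from it by a $B$-field. As in the setting of Theorem \ref{summary theorem}, where $G$ is the Weinstein groupoid, this Poisson automorphism lifts to a symplectic groupoid automorphism of $G$, which I retain as the groupoid isomorphism in the WHSG equivalence, while the $B$-field is absorbed into a change of invariant splitting as in Section \ref{invariant splittings section} (such a $B$-field is basic over $E/\Phi = M$, so it does not disturb the reduction). After these identifications I may assume $\II_M^\Phi = \II_M^\Psi =: \II_M$, so that $\Phi \actson E \ractson G$ and $\Psi \actson F \ractson G$ are two holomorphic-symplectic resolutions, in the sense of Example \ref{holomorphic symplectic action}, of one and the same generalized complex manifold $(M,\II_M)$.

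The core of the argument is to construct the holomorphic structure on $D \iso (E \x_M F)/G$, the quotient by the residual diagonal $G$-action of the fiber product of the two resolution maps $E \to M \leftarrow F$, and this is where I expect the main difficulty to lie. I would build it by running the mechanism of Proposition \ref{unique derivative} in reverse: assemble $\Phi \sqcup \Psi \sqcup D \sqcup \Bar{D}$ into a single groupoid $\Phi *_D \Psi$ acting on $E \sqcup F$, where the off-diagonal action carries $e \in E$ to the point of $F$ lying over the same point of $M$ that is matched to it by the common reduction $\II_M$. The decisive point to verify is that the two generalized complex reductions --- of $(E,\II_{\omega_E})$ by $\Phi$ and of $(F,\II_{\omega_F})$ by $\Psi$ --- yield not merely the same real Poisson structure (which already follows from $G$ being shared, by Proposition \ref{only if}) but the \emph{same} generalized complex structure $\II_M$. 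This equality forces the intertwining Dirac relation between the invariant, $\II$-holomorphic splittings on the two sides to be a generalized holomorphism, whence the induced almost complex structure on $D$ is integrable and the $\Phi$- and $\Psi$-actions on $D$ are holomorphic. In short, it is precisely the coincidence of the transverse complex data, and not only of the Poisson data, that promotes the real symplectic Morita equivalence $\Bar{F}\comp E$ to a holomorphic one.

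Finally I would record that the resulting holomorphic $D$, the automorphism of $G$ from the first step, and the isomorphism $\alpha \colon F \comp D \isoto E$ constructed above together constitute an equivalence of WHSG's in the sense of Definition \ref{WHSG defn}, and that this construction is inverse, up to the functoriality of Proposition \ref{unique derivative}, to differentiation. The main obstacle, as indicated, is the integrability of the complex structure on $D$: establishing that the fiberwise matching dictated by $\II_M$ varies holomorphically, for which the generalized complex --- rather than merely Poisson --- content of the hypothesis is exactly what is needed.
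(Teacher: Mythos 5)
Your setup coincides with the paper's: the candidate bibundle is $D := E \comp \Bar{F} = (E \x_M \Bar{F})/G$, the real symplectic Morita equivalence and the weak commutativity with $E$ and $F$ come for free, and the entire content is promoting $D$ to a \emph{holomorphic} symplectic Morita equivalence using the generalized complex (not merely Poisson) content of the hypothesis. Your normalization via a groupoid automorphism of $G$ and absorption of the $B$-field is a reasonable preliminary. But the step you flag as ``where I expect the main difficulty to lie'' is in fact left unfilled, and the substitute you offer is circular: assembling $\Phi *_D \Psi$ as a holomorphic symplectic groupoid acting on $E \sqcup F$ already presupposes a holomorphic symplectic structure on $D$ compatible with both actions --- that is exactly what Proposition \ref{unique derivative} consumes and what this theorem must produce. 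Saying that the equality of the two reductions ``forces the intertwining Dirac relation \ldots to be a generalized holomorphism, whence the induced almost complex structure on $D$ is integrable'' names the desired conclusion without constructing the almost complex structure on $D$ or exhibiting why it is integrable.

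The missing mechanism in the paper is concrete and worth knowing. One views the Courant algebroid $\Tc E \x_M \Bar{\Tc F}$ as admitting \emph{two} reductions: down to $\Tc D$ by the diagonal $G$-action, and down to $(\Tc E \qq \Phi) \x_M \Bar{(\Tc F \qq \Psi)}$ by the $\Phi$- and $\Psi$-actions. The hypothesis --- a generalized complex isomorphism between the two derivatives covering $\Id_M$ --- is precisely a generalized complex brane $\nabla$ supported on the diagonal of $M \x M$; pulling it back through the second reduction gives a brane $\nabla_{EF}$ supported on $E \x_M F$. The kernel of that reduction is computed as $K_r = (\II_{\omega_E},-\II_{\omega_F})\cdot N^*(E \x_M F)$, and the $(\II_{\omega_E},-\II_{\omega_F})$-invariance of the brane forces $K_r \subset \nabla_{EF}(T(E\x_M F)) \subset K_r^\perp$, so $\nabla_{EF}$ descends through the first reduction to an $\II_{\omega_D}$-invariant, involutive, isotropic splitting of $\Tc D$; Remark \ref{splitting remark} then converts this splitting into the holomorphic symplectic structure on $D$. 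This brane-transport argument is the essential idea your proposal is missing; without it, the ``fiberwise matching dictated by $\II_M$'' has no precise meaning, since a point of $M$ does not single out a point of $F$.
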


\begin{proof}
We have a candidate for the bibundle $\Psi \actson D \ractson \Phi$, namely, $D := E \comp \Bar{F} = (E \x_M \Bar{F}) / G$.  We already know that this is a real symplectic Morita equivalence between $\Phi$ and $\Psi$, commuting with $E$ and $F$, with symplectic form $\omega_D$ given by the reduction of $(-\omega_E,\omega_F)$.  Thus, $\Tc D$ has a generalized complex structure $\II_{\omega_D}$ coming from $\omega_D$ in the standard way.  We will construct an $\II_{\omega_D}$--invariant splitting of $\Tc D$ by pulling back, from $M$ to $E \x_M \Bar{F}$, the data which determine the generalized complex isomorphism in the hypothesis of the theorem, and then pushing forward these data to $D$.

The Courant algebroid $\Tc E \x_M \Bar{\Tc F}$ has two different reductions,
\begin{align}
\Tc D \;\overset{\tilde{G}}{\lto}\; \Tc E \x_M \Bar{\Tc F} \;\oto{r}\; (\Tc E \qq \Phi) \x_M (\Bar{\Tc F \qq \Psi}).
\end{align}
where $\tilde{G}$ is the standard Courant lift of the quotient $G : E \x_M \Bar{F} \to D$, and $r$ is a product of the Courant reductions by $\Phi$ and $\Psi$.

We claim that both of these reductions are generalized holomorphic, in the sense that they determine generalized complex branes on the graphs of the underlying maps.  (For $\tilde{G}$ it follows from the definition of $\omega_D$, and for $r$ it is true by definition of the reduced structures on $M$.)

A generalized complex isomorphism from $\Tc E \qq \Phi$ to $\Tc F \qq \Psi$ covering the identity on $M$ is precisely a generalized complex brane,
\begin{align}
\nabla : TM \into (\Tc E \qq \Phi) \x_M \Bar{(\Tc F \qq \Psi)}
\end{align}
supported on $\mathrm{Diag}(M) \subset M \x M$.  $\nabla$ is the pushforward via $r$ of a unique generalized complex brane $\nabla_{EF} := r^*(\nabla)$ supported on $E \x_M F \subset E \x M$.

The kernel, $K_r \subset \Tc E \x_M \Bar{\Tc F}$, of $r$ is the image of the infinitesimal action of $G$, thus
\begin{align}
K_r &= (\omega_E^{-1},-\omega_F^{-1}) \cdot N^*(E \x_M F) \notag \\
&= (\II_{\omega_E},-\II_{\omega_F}) \cdot N^*(E \x_M F).
\end{align}
Since $\nabla_{EF}$ is generalized complex,
\begin{align}
\nabla_{EF}\left(T(E \x_M F)\right) + N^*(E \x_M F)
\end{align}
is $(\II_{\omega_E},-\II_{\omega_F})$--invariant.  Therefore $\nabla_{EF}\left(T(E \x_M F)\right)$ contains $K_r$, and thus is contained in $K_r^\perp$.  Then $\nabla_{EF}$ passes to an $\II_{\omega_D}$--invariant splitting, $\nabla_D$, of $\Tc D \iso (K_r^\perp / K_r) / G$.  As per Remark \ref{splitting remark}, this determines a holomorphic symplectic structure on $D$.  These data are compatible with the $(\Phi,\Psi)$--bibundle structure, and in fact this construction is inverse to that of Proposition \ref{unique derivative}.
\end{proof}

Combining Proposition \ref{only if}, Theorem \ref{unique integration} and Lie's second theorem for groupoids \cite{MackenzieXu1997}:
\begin{cor}\label{s-connected uniqueness}
A generalized complex structure has at most one integrating WHSG (up to equivalence) whose real groupoid is $s$-connected and $s$-simply-connected.
\end{cor}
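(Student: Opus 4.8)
The plan is to deduce the uniqueness statement from Theorem~\ref{unique integration}, once both integrating objects have been arranged to lie over a single real symplectic groupoid. So suppose $\Phi \actson E \ractson G$ and $\Psi \actson F \ractson H$ are two WHSG's differentiating to the same generalized complex structure $(M,\II)$, and assume that the real symplectic groupoids $G \toto M$ and $H \toto M$ are each $s$-connected and $s$-simply-connected. By Proposition~\ref{only if}, both $G$ and $H$ integrate the underlying real Poisson structure $P$ of $\II$; in particular their Lie algebroids are both canonically identified with the Poisson (cotangent) Lie algebroid $(T^*M, P)$.

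Next I would invoke Lie's second theorem for Lie groupoids \cite{MackenzieXu1997}: two $s$-connected, $s$-simply-connected groupoids integrating the same Lie algebroid admit a unique isomorphism covering the identity morphism of that algebroid. This yields a Lie groupoid isomorphism $\psi : G \isoto H$ over $\Id_M$. The point that needs care is that $\psi$ should be an isomorphism of \emph{symplectic} groupoids, i.e.\ that $\psi^* \omega_H = \omega_G$. I would argue this from the fact that the multiplicative symplectic form on an $s$-simply-connected integration is determined by the Poisson structure alone: both $\omega_G$ and $\psi^* \omega_H$ are multiplicative symplectic forms on $G$ inducing $P$ on $M$, and such a form is fixed by its infinitesimal data, so the same uniqueness that produces $\psi$ forces the two forms to agree. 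Hence $\psi$ is a symplectic groupoid isomorphism.

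Using $\psi$ to transport the right $H$-action on $F$ to a right $G$-action, I may regard $\Psi \actson F \ractson G$ as a WHSG over the \emph{same} real symplectic groupoid $G$, with derivative still equivalent to that of $\Phi \actson E \ractson G$. Now Theorem~\ref{unique integration} applies directly and produces a holomorphic symplectic Morita equivalence $\Phi \actson D \ractson \Psi$, together with the isomorphism exhibiting the weak commutativity of the defining triangle. Packaging this holomorphic Morita equivalence with the symplectic isomorphism $\psi : G \isoto H$ gives precisely the data of an equivalence of WHSG's in the sense of Definition~\ref{WHSG defn}, which is the desired conclusion.

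The main obstacle I anticipate is exactly the symplectic-compatibility of the Lie~II isomorphism: Lie's second theorem is a statement about bare Lie groupoids, and one must upgrade the resulting $\psi$ to the symplectic category before the bookkeeping in Definition~\ref{WHSG defn} can be carried out. Once the canonicity of the multiplicative symplectic form on the $s$-simply-connected integration is in hand, the remaining steps are formal: transporting the action along $\psi$ and citing Theorem~\ref{unique integration} are routine, and the weak commutativity of the equivalence square is inherited from the construction in that theorem, which was performed over a fixed $G$.
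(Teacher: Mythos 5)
Your proof is correct and follows exactly the route the paper intends: it combines Proposition~\ref{only if}, Lie's second theorem for groupoids, and Theorem~\ref{unique integration}, which is precisely the paper's (one-line) argument. The extra care you take in upgrading the Lie~II isomorphism to a symplectic groupoid isomorphism is a detail the paper leaves implicit, and your justification of it is the standard one.
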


\section{Differentiation applied to holomorphic localizations}\label{localizations}
In this section, we describe a special class of weakly holomorphic symplectic groupoids, defined using special data, which we call \emph{holomorphic localizations}.  These will be the integrations that we construct in Section \ref{integration}.  In Section \ref{holomorphic localizations section}, and in particular in Proposition \ref{differentiating a localization}, we explain how the generalized complex structure differentiating a holomorphic localization (in the sense of Sections \ref{courant reduction section} and \ref{WHSG section}) may be computed from the special data.

First, the classical notion of a groupoid localization:
\begin{defn}
If $G \rightrightarrows M$ is a Lie groupoid and $\sr{U} = \{U_i,\ldots\}_{i\in \sr{I}}$ is an open cover of $M$, then the \emph{localization} of $G$ with respect to $\sr{U}$ is the groupoid with base
\begin{align*}
X := \bigsqcup_i U_i = \bigcup_{i, x \in U_i} (i,x)
\end{align*}
and arrows
\begin{align*}
G_\sr{U} := \bigsqcup_{i,j} G_{ij}  = \bigcup_{\substack{i,j \\ g \in G_{ij}}} (i,g,j),
\end{align*}
where $G_{ij} = s^{-1}(U_i) \cap t^{-1}(U_j)$.  The structure maps are
\begin{itemize}
\item $s(j,g,i) = (i,s(g))$
\item $t(j,g,i) = (j,t(g))$
\item $(k,h,j) \cdot (j,g,i) = (k,hg,i)$
\item $\Id_{(i,x)} = (i, \Id_x, i)$
\end{itemize}
\end{defn}

There is a natural covering map $\phi : G_\sr{U} \to G$ which collapses the disjoint union.  $\phi$ is an \emph{essential equivalence}, which in general may be used to construct a Morita equivalence.

\subsubsection{The localization bibundle}
In this case, $G_\sr{U}$ is Morita equivalent to $G$ via the bibundle
\begin{align}
E := \bigsqcup_{i} E_i = \bigcup_{\substack{i \\ g \in E_i}} (i,g),
\end{align}
where $E_i = t^{-1}(U_i) \subset G$.  $E$ has a left $G_\sr{U}$--action whose effect on indices is
\begin{align}
G_{ij} \fx{s}{t} E_i \to E_j,
\end{align}
and a right $G$--action fixing the indices:
\begin{align}
E_i \lto E_i \fx{s}{t} G.
\end{align}


\subsubsection{The \v{C}ech groupoid and special sections}\label{Cech section}
$\Id_{\sr{U}} := \phi^{-1}(\Id_M)$ is itself a subgroupoid of $G_\sr{U}$, called the \emph{\v{C}ech groupoid}, consisting of the identity elements of $G_\sr{U}$ along with additional bisections going \emph{between} the $U_i$'s.  We denote by $\Id_{ij}$ the bisection, $t^{-1}(U_j) \cap s^{-1}(U_i) \subset \Id_\sr{U}$, between $U_i$ and $U_j$.  $\Id_\sr{U}$ is just the localization of $\Id_G$, and $\phi : G_\sr{U} \to G$ is just the quotient by $\Id_\sr{U}$.

Similarly, $E$ has a special section (over $X = \bigsqcup \sr{U}$) corresponding to $\Id$ in $G$.  Denote this section $\Id_E$ (though neither $E$ nor $\Id_E$ is a groupoid), and denote the subsection $\Id_E \cap E_i$ over $U_i$ by $\Id_i$.  In fact, we have a sub-Morita-equivalence, $\Id_\sr{U} \actson \Id_E \ractson \Id_G$.

\note{picture here!!!}

\subsection{Holomorphic symplectic localizations}\label{holomorphic localizations section}

If $G$ is symplectic, then so are its localization, $G_\sr{U}$, and the bibundle, ${G_\sr{U} \actson E \ractson G}$.

\begin{defn}
A \emph{holomorphic localization} of a real symplectic groupoid, $G\toto M$, consists of a localization, $G_\sr{U}$, equipped with a multiplicative holomorphic symplectic structure whose imaginary part is equal to the symplectic structure coming from $G$.
\end{defn}

In this case, $G_\sr{U} \actson E \ractson G$ is a weakly holomorphic symplectic groupoid, and it differentiates to give a generalized complex structure $\II_M$ on $M$.  Since holomorphic localizations are precisely the integrations we construct in Section \ref{integration}, we would like to understand $\II_M$ concretely in terms of the localization data.  

\begin{rem}
To compute the generalized complex reduction of $\Tc E$ by $G_\sr{U}$, it is sufficient to look only at a subset $\D E \subset E$ which is surjective onto $M$.  In our case, we will take, as $\D E$, a neighbourhood of $\Id_E$.  We will compute $(\Tc E)_{red}|_{\D E}$, and then pass to an exact Courant algebroid on $M$ via the quotient determined by the action of $G_\sr{U}$.
\end{rem}

\begin{prop}\label{differentiating a localization}
Let $G_\sr{U}$ be a holomorphic localization of the real symplectic groupoid $G \toto M$ over the open cover $\sr{U}=\{U_1,U_2,\ldots\}$.  The weakly holomorphic symplectic groupoid $G_\sr{U} \actson E \ractson G$ determined by these data differentiates to an exact Courant algebroid, $\bb{A}$, on $M$, with a generalized complex structure, $\II$.  Then $(\bb{A},\II)$ may be computed as follows:
\begin{enumerate}
\item Each $(\bb{A},\II)|_{U_i}$ is equivalent to the standard $(\Tc U_i, \II_{I_i,\pi_i})$, where $I_i$ and $\pi_i$ are the complex structure and holomorphic Poisson structure differentiating $G_{ii}$. \label{diff local 1}
\item On $U_{ij} := U_i \cap U_j$, the copy of $\Tc U_i$ mentioned above is glued to $\Tc U_j$ by a $B$-transform $B_{ij}$, where $B_{ij}$ is the pullback to the section $\Id_{ij} \subset G_{ij}$ of the real part of the holomorphic symplectic form. \label{diff local 2}
\end{enumerate}
\end{prop}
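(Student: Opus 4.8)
The plan is to compute the reduction locally, using the special section $\Id_E\subset E$ and the Remark that it suffices to work in a neighbourhood $\D E$ of $\Id_E$, which is surjective onto $M$. For part \ref{diff local 1}, I would restrict to a single chart $U_i$, the component $E_i=t^{-1}(U_i)\subset G$, and its section $\Id_i\subset E_i$. The first point is that, infinitesimally near $\Id_i$, the only part of $G_\sr{U}$ acting within $E_i$ is the diagonal block $G_{ii}=s^{-1}(U_i)\cap t^{-1}(U_i)$: the source-fibres of $G_\sr{U}$ at the identities over $U_i$ lie in $G_{ii}$, so $\Lie(G_\sr{U})|_{U_i}=\Lie(G_{ii})$, while the off-diagonal blocks $G_{ij}$ ($j\neq i$) carry points out of $E_i$. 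Since the reduced Courant algebroid $(\Tc E)_{red}=K^\perp/K$ with its structure $\II_{red}$ is determined infinitesimally, over $U_i$ it agrees with the reduction of $\Tc E_i$ by the holomorphic symplectic groupoid $G_{ii}$; moreover, near $\Id_i$ each source-fibre $s^{-1}(x)\cap E_i$ is a single $G_{ii}$-orbit, so the passage to $M=E/G_\sr{U}$ is just the quotient by left multiplication, with quotient map $s:G_{ii}\to U_i$.

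Next I would identify this reduction with $\II_{I_i,\pi_i}$. Near $\Id_i$ the arrows of $E_i$ are close to identities, so $E_i$ is identified with $G_{ii}$ and inherits its holomorphic symplectic form, whose imaginary part is the real symplectic form $\omega_E$ driving the differentiation (this compatibility is the infinitesimal content of the sub-Morita-equivalence $\Id_\sr{U}\actson\Id_E\ractson\Id_G$). We are then exactly in the situation of Example \ref{holomorphic on holomorphic action}---a holomorphic symplectic groupoid acting holomorphically and symplectically on a holomorphic symplectic manifold---so Proposition \ref{hol reduction is hol poisson pushforward} identifies the reduction with the holomorphic Poisson pushforward of $E_i\iso G_{ii}$ along $s$, which is the holomorphic Poisson structure $\pi_i$ (and complex structure $I_i$) differentiating $G_{ii}$. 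The section $\Id_i$ supplies the invariant splitting realising the equivalence $(\bb{A},\II)|_{U_i}\iso(\Tc U_i,\II_{I_i,\pi_i})$.

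For part \ref{diff local 2}, over $U_{ij}$ we obtain two such trivialisations of $\bb{A}$, one from $\Id_i\subset E_i$ and one from $\Id_j\subset E_j$; since both $\Id_i$ and $\Id_j$ lie in a single $G_\sr{U}$-orbit over each point of $U_{ij}$, the two trivialisations must differ by the Courant action of the \v{C}ech bisection $\Id_{ij}\subset G_{ij}$ relating them. As $\Id_{ij}$ covers identities in $G$, its underlying diffeomorphism is the identity on $U_{ij}$, so this transition is an automorphism of $\Tc U_{ij}$ covering the identity, i.e.\ a $B$-transform. Tracking the modified Courant action $\tilde a_B$ of Example \ref{holomorphic symplectic action}, a groupoid element acts with the $B$-transform given by the pullback, along its bisection, of the multiplicative two-form $B=\Re\Omega$ that renders $TG_\sr{U}$ invariant; pulled back along $\Id_{ij}$ this is precisely $B_{ij}=\Id_{ij}^*\,\Re\Omega$, as claimed.

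The step I expect to be the main obstacle is the identification in part \ref{diff local 1}: verifying that near $\Id_i$ the real symplectic form $\omega_E$ on $E_i$ really is the imaginary part of the holomorphic symplectic form pulled back from $G_{ii}$, so that Proposition \ref{hol reduction is hol poisson pushforward} applies verbatim, and then fixing the orientation---i.e.\ that one obtains $\pi_i$ rather than $-\pi_i$ or its conjugate---which hinges on the sign conventions for $s$ versus $t$ and must match the requirement (Proposition \ref{only if}) that the underlying real Poisson structure be $P$. Once the invariant splittings from $\Id_i$ and $\Id_j$ are pinned down, part \ref{diff local 2} reduces to bookkeeping of the $B$-transform under the $\Id_{ij}$-action.
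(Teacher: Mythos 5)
Your proposal is correct and follows essentially the same route as the paper's proof: for part (1), identify a neighbourhood of $\Id_i\subset E_i$ with a neighbourhood of $\Id_{ii}\subset G_{ii}$ (both being copies of the same subset of $G$), transfer the holomorphic symplectic form so that Example \ref{holomorphic on holomorphic action} and Proposition \ref{hol reduction is hol poisson pushforward} apply and yield the invariant splitting identifying the reduction with $(\Tc U_i,\II_{I_i,\pi_i})$; for part (2), restrict the $B$-modified Courant action of Example \ref{holomorphic symplectic action} to the \v{C}ech bisection $\Id_{ij}$ and read off the gluing $B$-transform as the pullback of $\Re\Omega$. The compatibility you flag as the main obstacle (that $\Im$ of the transferred holomorphic symplectic form is $\omega_E$) is in fact built into the definition of a holomorphic localization, so it needs no separate verification.
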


\begin{proof}[Proof of \eqref{diff local 1}]
\renewcommand{\qedsymbol}{}
The sections $\Id_{ii} \subset G_\sr{U}$ and $\Id_i \subset E$ have neighbourhoods---call them $\D G_{ii} \subset $ and $\D E_i$ respectively---which may be identified with each other, since they are copies of the same subset of $G$.  The holomorphic symplectic structure on $\D G_{ii}$ determines a holomorphic symplectic structure, $B_i + i\omega_i$, on $\D E_i$ for which the (local) action of $\D G_{ii}$ is holomorphic symplectic.  As explained in Example \ref{holomorphic on holomorphic action}, the $B$-transformed $e^{-B_i}\cdot T(\D E_i) \subset \Tc (\D E_i)$ gives a $G_{ii}$--invariant, $\II_{\omega_E}$--invariant splitting of $\Tc (\D E_i)$ containing the reduction kernel $K$; in turn, this gives an identification of $(\Tc E \qq G_\sr{U})|_{U_i}$ with $\Tc U_i$, on which the reduced generalized complex structure is holomorphic Poisson.  This holomorphic Poisson structure, $(I_i,\pi_i)$, is precisely the derivative of the holomorphic symplectic subgroupoid $G_{ii} \toto U_i$.
\end{proof}
\begin{proof}[Proof of \eqref{diff local 2}]
$\Tc G_i$ is glued to $\Tc G_j$ by the action of $G_{ij}$, via the Courant action morphism
\begin{align}
{\tilde{a}|_{G_{ij}} : \Tc G_{ij} \fx{s}{t} \Tc G_i|_{U_{ij}} \to \Tc G_j}.
\end{align}
To get an explicit map from $\Tc G_i|_{U_{ij}}$ to $\Tc G_j$, we restrict $a$ further, to the \v{C}ech groupoid, i.e., the special section $\Id_{ij}$.
\begin{align}
\tilde{a}|_{\Id_{ij}} &: \Tc \Id_{ij} \fx{s}{t} \Tc G_i|_{U_{ij}} \to \Tc G_j \label{twisted action} \\
&: \Tc G_i|_{U_{ij}} \to \Tc  G_j \notag
\end{align}
This map reduces, on $K^\perp/K$, to the $G$-action defined in Section \ref{reduction section}, which gives the gluing for the reduction.  Recall that, for a weakly holomorphic symplectic groupoid, the Courant action $\tilde{a}$ was defined (in Example \ref{holomorphic symplectic action}) as the standard lift of the Lie groupoid action, modified by the $B$-field on $G_\sr{U}$.  The effect of \eqref{twisted action}, then, is to modify the standard lift of $\Id_{ij} : G_i|_{U_{ij}} \to G_j$ by this $B$-field pulled back to $\Id_{ij}$.
\end{proof}

\subsection{A parity condition for holomorphic localizations}\label{rank 2 case}

If a generalized complex structure is constructed, as in Proposition \ref{differentiating a localization}, by gluing holomorphic Poisson structures, then, since the complex rank of these Poisson structures is $0 \bmod 2$, the rank of the underlying \emph{real} Poisson structure will be $0 \bmod 4$.  But it is also possible for a generalized complex structure to have real Poisson rank $2 \bmod 4$, and thus the integrations of such structures cannot be realized directly by holomorphic localizations.  As explained in Section \ref{parity reduction section}, we deal with the $2 \bmod 4$ case by taking a product with symplectic $\R^2$ to give real rank $0 \bmod 4$, and noting that the result is Morita-equivalent to the original.

\section{Gauge transforms of holomorphic structures}\label{B-field section}

In this section, we describe a fundamental construction by which a complex structure on a holomorphic symplectic groupoid may be modified by a $B$-field on the base.  First, in Section \ref{holomorphic Poisson discussion}, we describe how this modification acts on a holomorphic Poisson manifold.  Then, in Section \ref{modification of groupoid}, we ``lift'' this construction to groupoids in a multiplicative way.

\subsection{Gauge transforms of holomorphic Poisson and holomorphic symplectic structures}\label{holomorphic Poisson discussion}

As explained in Section \ref{generalized complex section}, given a complex structure $I$ and a holomorphic Poisson structure $IP + iP$, we have the generalized complex structure
\begin{align}\label{P as gc}
\II_{I,P} :=
\begin{pmatrix}
-I & P \\
0 & I^*
\end{pmatrix}
\end{align}
We can ask under what conditions a $B$-field transform by some $B_{ij}$ will take $\II_{I,P}$ to another structure of the form \eqref{P as gc}, i.e., when
\begin{align}
\begin{pmatrix} 1 & 0 \\ B_{ij} & 1 \end{pmatrix}
\begin{pmatrix} -I & P \\ 0 & I^* \end{pmatrix}
\begin{pmatrix} 1 & 0 \\ -B_{ij} & 1 \end{pmatrix} &=
\begin{pmatrix} -J & P \\ 0 & J^* \end{pmatrix}
\end{align}
for some $J$.  This boils down to the equations, studied in \cite{Gualtieri2010},
\begin{align}
J  &= I + PB_{ij} \label{PB1} \\
0 &= J^* B_{ij} + B_{ij} I \label{PB2}
\end{align}
or, phrased purely in terms of $I$, $P$ and $B_{ij}$,
\begin{align}\label{poisson modification condition}
B_{ij}I + I^*B_{ij} + B_{ij}PB_{ij} = 0.
\end{align}

In the case of $P = \omega^{-1}$ for $B+i\omega$ a holomorphic symplectic structure, \emph{if} this condition is satisfied, then the generalized complex structure is transformed into one corresponding to the holomorphic symplectic structure $(B+B_{ij}) + i\omega$.

\begin{rem}\label{hol symp implies complex}
In fact, on a holomorphic symplectic manifold, the data of the complex structure $I$ and the holomorphic symplectic form $B+i\omega$ are redundant.  Any two of $I$, $B$ and $\omega$ determines the third (and on a Lie groupoid the multiplicativity of any two ensures the multiplicativity of the third).  Of course, for $B + i\omega$ to determine a \emph{bona fide} complex structure $I = \omega^{-1} B$, they must satisfy certain algebraic relations, but integrability of $I$ is given by the closedness of $B$ and $\omega$.
\end{rem}

\subsection{Modification of holomorphic symplectic groupoids}\label{modification of groupoid}

The following lemma is a reformulation of the fact \cite{Xu1991} that the $s$ and $t$ fibres of a symplectic groupoid are symplectic orthogonal.

\begin{lem}\label{symp orth}
If $\omega^{-1}$ is the nondegenerate Poisson structure on a symplectic groupoid, then
$$s_* \comp \omega^{-1} \comp t^* = t_* \comp \omega^{-1} \comp s^* = 0.$$
\end{lem}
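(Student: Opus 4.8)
The plan is to unwind the composition pointwise and reduce the vanishing to the symplectic orthogonality of the source and target fibres, which is precisely the fact this lemma reformulates. Write $T^sG = \ker s_*$ and $T^tG = \ker t_*$ for the source and target distributions. Since $\Lie(G) \iso T^*M$ forces $\dim G = 2\dim M$, and $s,t$ are submersions, both $T^sG$ and $T^tG$ are half-rank subbundles; the cited fact then asserts that they are mutual symplectic orthogonals, $(T^tG)^\omega = T^sG$ and $(T^sG)^\omega = T^tG$, where $(\cdot)^\omega$ denotes the symplectic complement with respect to $\omega$.

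For the first identity, fix $g \in G$ and a covector $\xi \in T^*_{t(g)}M$, and set $X := \omega^{-1}(t^*\xi) \in T_gG$. By the defining property of $\omega^{-1}$ (namely $\omega(X,\cdot) = t^*\xi$ as a covector, a relation insensitive to the sign convention chosen for $\omega^{-1}$), we have, for every $Y \in T_gG$,
\begin{align*}
\omega(X,Y) = \langle t^*\xi, Y\rangle = \langle \xi, t_*Y\rangle.
\end{align*}
In particular, if $Y \in T^t_gG = \ker t_*$ then $\omega(X,Y) = 0$, so $X$ is symplectically orthogonal to the entire target fibre, i.e.\ $X \in (T^t_gG)^\omega = T^s_gG$. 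Hence $s_*X = 0$, which is exactly the assertion $s_* \comp \omega^{-1} \comp t^* = 0$. Here the interpretation of the composite is the natural one: at each $g$ and each covector $\xi$ over $t(g)$, the output is the zero vector in $T_{s(g)}M$.

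The second identity is proved identically with the roles of $s$ and $t$ exchanged: setting $Y := \omega^{-1}(s^*\eta)$ gives $\omega(Y,Z) = \langle \eta, s_*Z\rangle$, which vanishes for all $Z \in T^s_gG$, so $Y \in (T^s_gG)^\omega = T^t_gG$ and therefore $t_*Y = 0$. I do not anticipate any genuine obstacle: the entire content is carried by the orthogonality of the fibres, and the only points needing minor care are fixing the convention for $\omega^{-1}$ (the argument is unaffected by it) and observing that the derived inclusion $X \in (T^t_gG)^\omega$ coincides with $X \in T^s_gG$ by the equality supplied by the cited fact.
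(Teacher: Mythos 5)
Your proof is correct and follows exactly the route the paper intends: the paper offers no written argument beyond the remark that the lemma ``is a reformulation of the fact that the $s$ and $t$ fibres of a symplectic groupoid are symplectic orthogonal,'' and your pointwise unwinding ($X=\omega^{-1}(t^*\xi)$ pairs trivially with $\ker t_*$, hence lies in $(\ker t_*)^\omega=\ker s_*$) is precisely that reformulation made explicit. No gaps.
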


We now describe the modification of a holomorphic symplectic structure on a groupoid:
\begin{prop}\label{modified complex groupoid}
Let $\Phi \toto X$ be a holomorphic symplectic groupoid with complex structure $I_0$ and holomorphic symplectic form $\Omega_0 = B_0 + i\omega$.  Denote also by $I_0$ the induced complex structure on $X$, and let $\pi_0 = I_0P + iP$ be the induced holomorphic Poisson structure on $X$.

If $B$ is a closed 2-form on $X$ such that
\begin{align}
BI_0 + I_0^*B + BPB = 0, \label{B cond}
\end{align}
then
\begin{align}
\Omega_1 := \Omega_0 + t^*(B) - s^*(B)
\end{align}
is a new multiplicative holomorphic symplectic structure on $\Phi$, which induces, on $X$, the  complex structure $I_1 := I_0 + PB$ and holomorphic Poisson structure $\pi_1 := I_1P + iP$.
\end{prop}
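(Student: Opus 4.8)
The plan is to dispatch multiplicativity and closedness immediately, reduce the holomorphic-symplectic assertion to a single pointwise algebraic identity via Remark~\ref{hol symp implies complex}, and then prove that identity by descending to the base using the symplectic orthogonality of Lemma~\ref{symp orth}.

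First I would write $\Omega_1 = B_1 + i\omega$ with $B_1 := B_0 + t^*B - s^*B$, noting that the imaginary part $\omega$ is unchanged. For any $2$-form $B$ on $X$ the combination $t^*B - s^*B$ is multiplicative, since $t\comp m = t\comp \b{p}_1$, $s\comp m = s\comp \b{p}_2$ and $s\comp \b{p}_1 = t\comp \b{p}_2$ on composable pairs; as $\Omega_0$ is multiplicative, so is $\Omega_1$. Closedness of $\Omega_1$ follows from $d\Omega_0 = 0$ and the hypothesis $dB = 0$. By Remark~\ref{hol symp implies complex}, since $B_1$ and $\omega$ are closed and multiplicative, they determine a multiplicative, integrable complex structure for which $\Omega_1$ is holomorphic symplectic \emph{provided} $I_1 := \omega^{-1}B_1$ squares to $-\Id$, equivalently $B_1\,\omega^{-1}B_1 = -\omega$. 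Writing $C := \omega^{-1}(t^*B - s^*B)$, so that $I_1 = I_0 + C$ with $I_0 = \omega^{-1}B_0$ and $(I_0)^2 = -\Id$, this reduces to the identity $I_0 C + C I_0 + C^2 = 0$ on $T\Phi$.

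The crux is this last identity, and it is where I expect the real work to lie. I would write $C = a\,B^\flat t_* - b\,B^\flat s_*$, where $a := \omega^{-1}t^*$, $b := \omega^{-1}s^*$, and $B^\flat : TX \to T^*X$. The four contractions $t_* a = P$, $s_* b = -P$, and $t_* b = s_* a = 0$ encode that $t$ is Poisson, $s$ is anti-Poisson, and---crucially---that the $s$- and $t$-fibres are symplectic orthogonal (Lemma~\ref{symp orth}); it is exactly these vanishing cross terms that let the total-space identity descend to the base. Using in addition that $t_*$ and $s_*$ intertwine $I_0$ on $\Phi$ with the induced $I_0$ on $X$ (as $t, s$ are holomorphic), together with $I_0\,\omega^{-1} = \omega^{-1}I_0^*$ (automatic from skew-symmetry of $\omega$ and $B_0$), each of $C^2$, $C I_0$ and $I_0 C$ collapses to the form $a\,(\cdot)\,t_* - b\,(\cdot)\,s_*$ with middle factors $B^\flat P B^\flat$, $B^\flat I_0$ and $I_0^* B^\flat$ respectively. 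Summing, $I_0 C + C I_0 + C^2 = a\,(B^\flat I_0 + I_0^* B^\flat + B^\flat P B^\flat)\,t_* - b\,(\cdots)\,s_*$, whose middle factor is precisely the left-hand side of \eqref{B cond}; the hypothesis therefore makes it vanish. The main obstacle is organizing these cancellations cleanly, and Lemma~\ref{symp orth} is the decisive input.

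It then remains to identify the structures induced on $X$. Since the imaginary part of $\Omega_1$ is still $\omega$, the real Poisson structure is unchanged, equal to $P$; and pushing $I_1 = I_0 + C$ forward along $t$ gives $t_* C = (t_* a)B^\flat t_* = P B^\flat t_*$, hence $t_* I_1 = (I_0 + PB)\,t_*$, so that the induced complex structure on $X$ is $I_1 = I_0 + PB$ (the computation through $s$ agrees). The differentiated holomorphic Poisson structure is thus $\pi_1 = I_1 P + iP$, in agreement with the base-level gauge transform of Section~\ref{holomorphic Poisson discussion} via \eqref{PB1} and \eqref{poisson modification condition}.
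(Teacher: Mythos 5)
Your proposal is correct and follows essentially the same route as the paper's proof: both reduce the claim to the pointwise identity of \eqref{poisson modification condition} for the multiplicative form $t^*B - s^*B$ (you phrase it as $I_0C + CI_0 + C^2 = 0$ for $C=\omega^{-1}(t^*B-s^*B)$, the paper as $CI_0 + I_0^*C + C\omega^{-1}C=0$ for the $2$-form, which are equivalent via $\omega^{-1}I_0^* = I_0\omega^{-1}$), kill the cross terms with Lemma~\ref{symp orth}, recognize the surviving terms as $t^*$ and $s^*$ of the hypothesis \eqref{B cond}, and then verify $t_*\comp I_1 = I_1\comp t_*$ exactly as in the paper. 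The only differences are cosmetic (your explicit factorization $C = a\,B^\flat t_* - b\,B^\flat s_*$, and your direct one-line argument for multiplicativity of $t^*B - s^*B$ where the paper cites the literature).
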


\begin{proof}
Let $B_s = s^*(B)$ and $B_t = t^*(B)$, and let $C = B_t - B_s$.  $t$ is real-Poisson and $s$ is anti-Poisson, i.e.,
\begin{align}\label{t and s}
t_*(\omega^{-1}) = P = -s_*(\omega^{-1}).
\end{align}
From Lemma \ref{symp orth} we see that
\begin{align}\label{B orth}
B_t \omega^{-1} B_s \;=\; t^* \comp B \comp t_* \comp \omega^{-1} \comp s^* \comp B \comp s_* \;=\; 0
\;=\; B_s \omega^{-1} B_t 
\end{align}
Combining \eqref{B cond}, \eqref{t and s} and \eqref{B orth},
\begin{align*}
CI_0 + I_0^*C + C\omega^{-1}C &= \left(B_tI_0 + I_0^*B_t + B_t \omega^{-1} B_t\right) \\
&  \qquad - \left(B_sI_0 + I_0^*B_s - B_s \omega^{-1} B_s\right) - B_t \omega^{-1} B_s - B_s \omega^{-1} B_t \\
&= t^*\left(BI_0 + I_0^*B + B P B\right) \\
&  \qquad - s^*\left(BI_0 + I_0^*B + B P B\right) - 0  \\
&= 0.
\end{align*}
Thus, by equation \eqref{poisson modification condition}, $\Omega_1 = \Omega_0 + C$ is indeed a holomorphic symplectic structure on $\Phi$, with new complex structure $I_1 = I_0 + \omega^{-1}C$.

Since $\Omega_0$ is multiplicative, and any 2-form expressable as $C = t^*B - s^*B$ is multiplicative \cite{bursztyn2004}, therefore $\Omega_1$ is also multiplicative (and, hence, so is $I_1$).

We claim that $I_1 = I_0 + \omega^{-1} C$ on the groupoid $\Phi$ induces $I_1 = I_0 + PB$ on the base $X$.  We check
\begin{align*}
t_* \comp I_1 &\;=\; t_* \comp I_0 \;+\; t_* \comp \omega^{-1} \comp B_t \;-\; t_* \comp \omega^{-1} \comp B_s \\
&\;=\; I_0 \comp t_* \;+\; t_* \comp \omega^{-1} \comp t^* \comp B \comp t_* \;-\; t_* \comp \omega^{-1} \comp s^* \comp B \comp s_* \\
&\;=\; I_0 \comp t_* \;+\; P \comp B \comp t_* \;-\; 0 \;=\; I_1 \comp t_*
\end{align*}
Thus, $t$ is $I_1$--holomorphic, and similarly so is $s$.  Finally, since $\pi_1$ is determined by $I_1$ and $P$, and $t$ is holomorphic and real-Poisson, therefore $t$ is holomorphic Poisson also for the new structures (similarly, $s$ is anti-Poisson).
\end{proof}

\begin{prop}\label{modification is equivalent}
The two different holomorphic symplectic groupoid structures, $\Phi$ and $\Phi'$, in Proposition \ref{modified complex groupoid} are holomorphically symplectically Morita equivalent in a natural way.
\end{prop}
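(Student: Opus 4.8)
The plan is to exhibit an explicit bibundle and verify it is a holomorphic symplectic Morita equivalence; it is the holomorphic analogue of the gauge-transformation bibundle of Bursztyn--Radko (compare \cite{bursztyn2004}). Write $\Phi'$ for the modified groupoid carrying $\Omega_1 = \Omega_0 + t^*B - s^*B$ and complex structure $I_1$. As \emph{real} symplectic groupoids $\Phi$ and $\Phi'$ coincide, since $\Omega_0$ and $\Omega_1$ have the same imaginary part $\omega$ ($B$ being real); only the real parts, equivalently the complex structures, differ. I take as bibundle the manifold $Q$ underlying $\Phi$, with left $\Phi$-action given by left multiplication (moment map $t$) and right $\Phi'$-action given by right multiplication (moment map $s$). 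At the level of real Lie groupoids this is precisely the identity Morita self-equivalence of the underlying $G$, so the two actions commute, are biprincipal, and have the correct quotients $Q/\Phi = X \cong Q/\Phi'$. It remains to equip $Q$ with a holomorphic symplectic structure rendering both actions holomorphic symplectic; the candidate is $\Omega_Q := \Omega_0 - s^*B\ (= \Omega_1 - t^*B)$, inducing the complex structure $I_Q = I_0 - \omega^{-1}s^*B$.

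The first key step is to check that $\Omega_Q$ is genuinely holomorphic symplectic. Its imaginary part is the nondegenerate $\omega$ and $s^*B$ is closed, so by Remark \ref{hol symp implies complex} the only issue is that $I_Q = \omega^{-1}(\mathrm{Re}\,\Omega_Q)$ squares to $-1$, integrability then being automatic. Writing $B_s = s^*B$, this amounts to verifying that $-B_s$ satisfies the gauge condition \eqref{poisson modification condition} on $\Phi$, namely $(-B_s)I_0 + I_0^*(-B_s) + (-B_s)\omega^{-1}(-B_s) = 0$. This is the pointwise (non-multiplicative) analogue of what Proposition \ref{modified complex groupoid} proves for the multiplicative $C = t^*B - s^*B$, and it follows from \eqref{B cond} by the same mechanism: using $s_*\omega^{-1}s^* = -P$ and holomorphicity of $s$ one computes $B_s\omega^{-1}B_s = -s^*(BPB)s_* = s^*(BI_0 + I_0^*B)s_* = B_sI_0 + I_0^*B_s$, where the middle equality is \eqref{B cond} and the last uses $I_0 s_* = s_* I_0$; rearranging gives the required identity. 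Thus $(Q,\Omega_Q)$ is holomorphic symplectic.

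The second step is the two action-compatibility identities. Using multiplicativity $m^*\Omega_0 = p_1^*\Omega_0 + p_2^*\Omega_0$ together with $s\circ m = s\circ p_2$, a direct computation gives $a_L^*\Omega_Q = p_\Phi^*\Omega_0 + p_Q^*\Omega_Q$ on $\Phi\fx{s}{t}Q$ and $a_R^*\Omega_Q = p_Q^*\Omega_Q + p_{\Phi'}^*\Omega_1$ on $Q\fx{s}{t}\Phi'$; in the second the cross terms cancel by the fiber-product relation $s\circ p_Q = t\circ p_{\Phi'}$ together with $\Omega_1 - \Omega_0 = t^*B - s^*B$. Taking imaginary parts recovers the standard real symplectic Morita equivalence, and taking real parts records the $B$-field gluing. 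Holomorphicity of the two actions then follows either from the determination of the complex structures by the real and imaginary parts of the holomorphic symplectic forms (Remark \ref{hol symp implies complex}), or by the same direct computation as in Proposition \ref{modified complex groupoid} showing the relevant maps intertwine the complex structures. Naturality is manifest, since $Q$, $\Omega_Q$ and both actions are built canonically from $(\Phi,\Omega_0,B)$ with no auxiliary choices.

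The main obstacle I expect is not the form-level bookkeeping but pinning down the complex structure $I_Q$ on the bibundle and confirming it is simultaneously compatible with $I_0$ (through the left action) and $I_1$ (through the right action); this is exactly where the gauge condition \eqref{B cond} must be invoked, and where one must remember that $-s^*B$, unlike the multiplicative $t^*B - s^*B$, is \emph{not} multiplicative, so $\Omega_Q$ is a genuine bibundle form rather than a groupoid form. A secondary point requiring care is the sign convention for the right action in Definition \ref{morita defn}; the computation above is consistent with the ``$+$'' convention recorded in Section \ref{lie groupoid section}, and a sign error there would instead force the mirror candidate $\Omega_0 + t^*B$.
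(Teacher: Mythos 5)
Your proposal is correct, and the object you construct is in fact literally the same bibundle the paper produces, reached by a different route. The paper localizes $\Phi$ over the two-chart cover $\{X_0,X_1\}$ of $X$, applies Proposition \ref{modified complex groupoid} to the localized groupoid with the 2-form $(0,B)$ on $X_0\sqcup X_1$, and then reads off the off-diagonal component $\tilde\Phi_{01}=t^{-1}(X_1)\cap s^{-1}(X_0)$ (carrying $\Omega_0+t^*B$) as the bibundle; its mirror $\tilde\Phi_{10}$ carries exactly your $\Omega_0-s^*B$. This packaging gets biprincipality, holomorphicity of both actions, and the compatibility identities for free from the multiplicative statement already proved, at the price of being less explicit. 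You instead verify everything by hand: your check that $-s^*B$ satisfies the gauge condition \eqref{poisson modification condition} on $\Phi$ (via $s_*\omega^{-1}s^*=-P$ and holomorphicity of $s$, flipping the sign from \eqref{B cond}) is sound, as are the two identities $a_L^*\Omega_Q=p_\Phi^*\Omega_0+p_Q^*\Omega_Q$ and $a_R^*\Omega_Q=p_Q^*\Omega_Q+p_{\Phi'}^*\Omega_1$ obtained from $s\circ m=s\circ p_2$ and $t\circ m=t\circ p_1$. The one point you gesture at rather than prove --- that these form identities imply the actions are holomorphic --- does hold: the graph of each action is half-dimensional and isotropic for the relevant complex-valued symplectic form, hence complex, by the same mechanism as Remark \ref{hol symp implies complex}; spelling this out would close the argument. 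Your approach buys an explicit description of the bibundle's holomorphic symplectic form and complex structure $I_Q=I_0-\omega^{-1}s^*B$, which the paper's localization argument leaves implicit; the paper's approach buys brevity and reuses the multiplicative machinery it has already built.
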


\begin{proof}
Let $\sr{U} := \{X_0,X_1\}$ be the open cover of $X$ consisting of two disjoint copies of $X \iso X_i$.  $\Phi$ localizes to the (holomorphic symplectic) groupoid $\Phi_\sr{U}$ with base $Y := X_0 \sqcup X_1$.  We modify the holomorphic structure on $\Phi_\sr{U}$, as in Proposition \ref{modified complex groupoid}, by the 2-form on $Y$ which equals $0$ on $X_0$ and $B$ on $X_1$, giving us the holomorphic symplectic groupoid $\tilde{\Phi}_\sr{U}$.  $\tilde{\Phi}_\sr{U}$ contains $\Phi = \tilde{\Phi}_{00}$ and $\Phi' = \tilde{\Phi}_{11}$ as full (holomorphic symplectic) subgroupoids, but it also contains components joining them.
In particular,
$$\tilde{\Phi}_{01} := t^{-1}(X_1) \cap s^{-1}(X_0) \;\subset\; \tilde{\Phi}_\sr{U}$$
is a holomorphic symplectic, biprincipal $(\Phi',\Phi)$--bibundle, equivalent, as a real symplectic bibundle, to a copy of $\Phi$.
\end{proof}

\begin{rem}\label{modification gives derivative}
We claim that the construction in the proof of Proposition \ref{modification is equivalent} is in some sense (right-)inverse to the differentiation construction prescribed in Proposition \ref{differentiating a localization}:

The holomorphic symplectic groupoid $\tilde{\Phi}_\sr{U}$ is a \emph{holomorphic localization} of (the imaginary part of) $\Phi$.  Thus, we can differentiate it according to Proposition \ref{differentiating a localization}.  It is clear that on $X_0$ (resp. $X_1$) we get the Courant algebroid $\Tc X_0$ (resp. $\Tc X_1$) endowed with the generalized complex structure coming from $(I_0,\pi_0)$ (resp. $(I_1,\pi_1)$).  But what is the $B$-field that Proposition \ref{differentiating a localization} determines we should use to glue $\Tc X_0$ to $\Tc X_1$?

Of course, \emph{it is precisely the $B$-field on $X$ which was used to modify $\Phi$}.  To see this, we inspect the modified holomorphic symplectic form on the special section, $\Id_{01} \subset \tilde{\Phi}_{01}$.  Before the modification by $B$, this section was holomorphic-Lagrangian (since it is identified with $\Id_\Phi$).  The modification pulls back $B$ from $X_1$ and adds it to the holomorphic symplectic form.
\end{rem}

\section{Existence of integrations: From GC to WHSG}\label{integration}

Here is the main ``integration'' result:

\begin{thm}\label{main theorem}
If $(M,\II)$ is a generalized complex manifold, then there exists a weakly holomorphic symplectic groupoid which differentiates to a generalized complex structure isomorphic to $\II$, if and only if the underlying real Poisson structure integrates to a symplectic groupoid.
\end{thm}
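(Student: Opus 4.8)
The plan is to prove both directions of the biconditional, with the forward (``only if'') direction being essentially already done and the reverse (``if'') direction carrying all the real content. For the \textbf{only if} direction: suppose a WHSG $\Phi \actson E \ractson G$ differentiates to a generalized complex structure isomorphic to $\II$. By Proposition~\ref{only if}, the real symplectic groupoid $G$ integrates the underlying real Poisson structure of the differentiated structure, hence of $\II$. This direction requires no further work beyond invoking that proposition.

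The substance is the \textbf{if} direction: assuming the underlying real Poisson structure $P$ of $(M,\II)$ integrates to a symplectic groupoid, I must \emph{construct} a WHSG differentiating to a structure isomorphic to $\II$. The strategy is to build a \emph{holomorphic localization} of the real symplectic groupoid $G$ integrating $P$, using the machinery of Sections~\ref{localizations} and~\ref{B-field section}. First I would invoke Bailey's normal-form theorem (Theorem~\ref{Bailey thm}): in a small enough neighbourhood $U_i$ of each point of $M$, the structure $\II$ is equivalent, after choosing a splitting, to a product of a symplectic factor with a holomorphic Poisson factor. To apply the localization construction of Proposition~\ref{differentiating a localization}, which glues \emph{holomorphic Poisson} charts, I need each local model to be genuinely holomorphic Poisson; by Corollary~\ref{parity corollary} this holds when the real Poisson rank is $0 \bmod 4$, and in the $2 \bmod 4$ case I would first use the parity reduction of Section~\ref{rank 2 case}, taking a product with symplectic $\R^2$ to land in the $0 \bmod 4$ case while remaining Morita-equivalent to the original. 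This reduces the problem to producing, over each $U_i$, a holomorphic symplectic structure on the restricted groupoid $G_{ii}$ whose imaginary part is the ambient real symplectic form, and then specifying consistent gluing $B$-fields $B_{ij}$ on the overlaps $U_{ij}$.

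The construction proceeds chart-by-chart: each local holomorphic Poisson structure $(I_i,\pi_i)$ on $U_i$ integrates to a local holomorphic symplectic groupoid, which—since the imaginary part of the Poisson structure is $P$—can be arranged to have imaginary part equal to the real symplectic structure on $G_{ii}$ coming from $G$. The key point is \emph{globalizing} these local holomorphic symplectic structures into a single multiplicative holomorphic symplectic structure on the localization $G_\sr{U}$. On each overlap, the two local complex structures $I_i$ and $I_j$ differ by a $B$-field, and the transition data must satisfy the cocycle condition \eqref{poisson modification condition} so that Proposition~\ref{modified complex groupoid} applies and the modifications assemble multiplicatively. Proposition~\ref{differentiating a localization} then guarantees that the resulting holomorphic localization $G_\sr{U} \actson E \ractson G$ differentiates back to a generalized complex structure whose charts are exactly the $(I_i,\pi_i)$ glued by the $B_{ij}$—that is, to a structure isomorphic to the original $\II$.

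The \textbf{main obstacle} I anticipate is the global extension/gluing step: passing from the purely local holomorphic normal forms supplied by Bailey's theorem to a single \emph{globally multiplicative} holomorphic symplectic form on $G_\sr{U}$. The difficulty is that multiplicativity is a global constraint on the groupoid, whereas Bailey's theorem is strictly local on $M$, and the local holomorphic structures need not a priori agree on overlaps even up to the controlled $B$-field ambiguity of Proposition~\ref{modified complex groupoid}. This is precisely the role flagged in the introduction for the ``local-to-global extension results for multiplicative forms on local Lie groupoids''—one must show the locally-defined multiplicative holomorphic structures extend and patch coherently, with the overlap $B$-fields satisfying \eqref{B cond} on the nose so that the transition groupoids $\tilde\Phi_{ij}$ of the type built in Proposition~\ref{modification is equivalent} genuinely exist as holomorphic symplectic bibundles. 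I expect this to require a careful \v{C}ech-type argument over the cover $\sr{U}$, controlling the cohomological obstruction to such patching, and this is where the bulk of the technical work lies.
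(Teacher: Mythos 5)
Your proposal follows essentially the same route as the paper: parity reduction to rank $0 \bmod 4$, a holomorphic cover from Bailey's normal form via Corollary \ref{parity corollary}, local integration of the holomorphic Poisson charts to holomorphic symplectic structures near the identity of the localization, gluing by the $B$-field modification of Proposition \ref{modified complex groupoid}, extension by the local-to-global multiplicativity lemma of the appendix, and verification of the derivative via Proposition \ref{differentiating a localization}. The one place you overestimate the difficulty is the anticipated \v{C}ech-cohomological obstruction to patching: in the paper this never arises, because the transition $B$-fields $B_{ij}=B_j-B_i$ come from globally coherent holomorphic cover data (so the cocycle condition and equation \eqref{poisson modification condition} hold by construction), and the local uniqueness of integrations (Lemma \ref{local integration of complex}) forces the modified local holomorphic symplectic forms to agree on overlaps automatically.
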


\begin{proof}
``Only if'' is Proposition \ref{only if}.  The remaining components are throughout this section, with the main construction given in Theorem \ref{construction of holomorphic localization}.  We outline the proof here.

Because of the discussion in Section \ref{parity reduction section}, in particular Lemma \ref{parity change lemma}, we can assume without loss of generality that the real Poisson structure associated to $\II$ has rank $0 \bmod 4$.  Then, as explained in Section \ref{holomorphic covers}, it admits a \emph{holomorphic cover}.  Thus, the construction of Theorem \ref{construction of holomorphic localization} is possible.
\end{proof}

\subsection{Parity reduction}\label{parity reduction section}
We would like to reduce to the case where the real Poisson structure underlying a generalized complex manifold $(M,\II)$ has rank $0 \bmod 4$.  If instead it has rank $2 \bmod 4$, then we will try to integrate $(M,\II) \x \R^2$, where $\R^2$ has the standard symplectic structure.

The standard symplectic $\R^4$ is a groupoid over the standard symplectic $\R^2$ by the additive action.  If $G \toto M$ is a (real) symplectic groupoid, then $G \x \R^4 \toto M \x \R^2$ is also a symplectic groupoid in the obvious way, and
\begin{lem}\label{parity is equivalent}
$G \x \R^4$ is symplectically Morita equivalent to $G$ with bibundle $G \x \R^2$.
\end{lem}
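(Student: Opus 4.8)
The plan is to exhibit the required equivalence as the product of two standard symplectic Morita equivalences, one for each factor.

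First I would observe that the groupoid $\R^4 \toto \R^2$ is, up to symplectomorphism, the pair groupoid of the standard symplectic plane $(\R^2,\omega)$: under the identification of the additive action groupoid with $\R^2 \x \R^2$ sending $(v,x) \mapsto (x+v,x)$, the source and target become the two projections and the standard form on $\R^4$ becomes $\mathrm{pr}_1^*\omega - \mathrm{pr}_2^*\omega$. It is classical that the pair groupoid of a symplectic manifold $S$ is symplectically Morita equivalent to the trivial groupoid $\{*\}\toto\{*\}$ over a point, with bibundle the symplectic manifold $S$ itself: the left action $(x,y)\cdot y = x$ is free and transitive, hence principal, its moment map is $\Id_S$, the right point-action is trivial, and one checks directly that $a^*\omega = \mathrm{pr}_1^*\omega - \mathrm{pr}_2^*\omega + \mathrm{pr}_2^*\omega$, so the action is symplectic. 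Applying this to $S = \R^2$ gives a symplectic Morita equivalence $\R^4 \actson \R^2 \ractson \{*\}$.

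Second, every symplectic groupoid $(G,\omega_G)$ is symplectically Morita equivalent to itself via the identity bibundle: $E = G$ with $\omega_E = \omega_G$, where $G$ acts on the left and right by multiplication, with moment maps $t$ and $s$. Both actions are principal, and the multiplicativity condition $a^*\omega_E = \mathrm{pr}_G^*\omega_G + \mathrm{pr}_E^*\omega_E$ is exactly the multiplicativity of $\omega_G$ on the symplectic groupoid.

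Finally I would take the product of these two equivalences. Since a product of biprincipal bibundles is again biprincipal, and the symplectic compatibility conditions are verified factorwise on the split form $\omega_G \oplus \omega$ (so that products of symplectic Morita equivalences are symplectic, cf.\ \cite{Xu1991}), the bibundle $G \x \R^2$, equipped with $\omega_G \oplus \omega$, realizes a symplectic Morita equivalence between $G \x \R^4$ and $G \x \{*\} = G$. Concretely, the left $(G\x\R^4)$-action combines left multiplication on $G$ with the pair-groupoid action on $\R^2$, with moment map $(t,\Id)\colon G\x\R^2 \to M\x\R^2$; the right $G$-action is right multiplication on $G$ with trivial action on $\R^2$, with moment map $s\comp\mathrm{pr}_G\colon G\x\R^2\to M$; and the left and right quotients are $M$ and $M\x\R^2$, matching the bases $M$ of $G$ and $M\x\R^2$ of $G\x\R^4$. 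The only point requiring care is the factorwise verification of the symplectic action law for the pair-groupoid factor, which is the short computation indicated above; everything else is formal.
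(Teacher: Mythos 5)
Your argument is correct. For what it's worth, the paper offers no proof of this lemma at all --- it is asserted as an immediate consequence of the set-up in Section \ref{parity reduction section} --- so there is no argument of the authors' to compare against; yours supplies the missing justification in what is surely the intended way. The decomposition into a product of two standard symplectic Morita equivalences is clean: the identification $(v,x)\mapsto (x+v,x)$ of the additive action groupoid $\R^4\toto\R^2$ with the pair groupoid of $(\R^2,\omega)$ carrying $\mathrm{pr}_1^*\omega-\mathrm{pr}_2^*\omega$ is right (and your ``up to symplectomorphism'' correctly absorbs the linear change of coordinates needed to match the literal standard form on $\R^4$); the pair groupoid of a symplectic manifold is symplectically Morita trivial with the manifold itself as bibundle, and the verification $a^*\omega = \mathrm{pr}_1^*\omega = (\mathrm{pr}_1^*\omega - \mathrm{pr}_2^*\omega) + \mathrm{pr}_2^*\omega$ is exactly the needed compatibility; the identity bibundle on $G$ is symplectic precisely by multiplicativity of $\omega_G$; and biprincipality and the symplectic action law are both checked factorwise on a product, so the product bibundle $(G\x\R^2,\omega_G\oplus\omega)$ does the job, with the quotients $M$ and $M\x\R^2$ matching the two bases as you note. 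No gaps.
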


$(M,P) \x (\R^2,\omega^{-1})$ is an integrable Poisson manifold precisely when $(M,P)$ is---if $G$ integrates $(M,P)$, then $G \x \R^4$ integrates $(M,P) \x (\R^2,\omega^{-1})$.  If we can find a weakly holomorphic symplectic groupoid
\begin{align*}
\Phi \actson E \ractson G \x \R^4
\end{align*}
integrating $(M,\II) \x \R^2$, then Lemma \ref{parity is equivalent} gives us another weakly holomorphic symplectic groupoid, ${\Phi \actson E \comp (G \x \R^2) \ractson G}$.

\begin{lem}\label{parity change lemma}
Let $\II$ be a generalized complex structure on exact Courant algebroid $\bb{A}$ on manifold $M$, and suppose that the derivative of the weakly holomorphic symplectic groupoid ${\Phi \actson E \ractson G \x \R^4}$ is isomorphic to $(M,\II) \x (\R^2,\II_\omega)$.  Let $E' = E \comp (G \x \R^2)$.  Then the derivative of $\Phi \actson E' \ractson G$ is isomorphic to $(M,\II)$. 
\end{lem}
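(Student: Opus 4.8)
The plan is to reduce Lemma \ref{parity change lemma} to the functoriality of differentiation (Proposition \ref{unique derivative}) combined with the compatibility of differentiation with the composition of Morita equivalences. The key observation is that $E' = E \comp (G \x \R^2)$ is obtained from $E$ by composing the right $(G \x \R^4)$-action with the bibundle $G \x \R^2$ of Lemma \ref{parity is equivalent}, which realizes the symplectic Morita equivalence between $G \x \R^4$ and $G$. Since composition of Morita equivalences preserves the symplectic (and holomorphic) structure \cite{Xu1991}, the data $\Phi \actson E' \ractson G$ is again a genuine weakly holomorphic symplectic groupoid: the left $\Phi$-action and its holomorphic atlas structure are untouched by composing on the right with a purely real symplectic bibundle. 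So $\Phi$ remains a holomorphic atlas for $G$.

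First I would make precise that the derivative depends only on the left $\Phi$-action together with the reduction data. Examining the definition of differentiation, $\II_M$ is computed as the generalized complex reduction of $(E, \II_{\omega_E})$ by the action $\Phi \actson E$, descending to $M = E/\Phi$. Composing on the right with $G \x \R^2$ changes $E$ to $E'$, but the left $\Phi$-action on $E'$ is induced from that on $E$, and the quotient $E'/\Phi$ is canonically identified with the base $M'$ of $G$. The heart of the argument is therefore to track how the generalized complex reduction transforms under the right composition. I expect that $E' / \Phi \iso (E/\Phi) \x_{\R^2} \text{pt}$ in an appropriate sense, realizing the passage from $M \x \R^2$ down to $M$: the factor $\R^2$ is ``absorbed'' by composing $G \x \R^4$ with $G \x \R^2$ over $G \x \R^2$.

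Concretely, I would argue as follows. By hypothesis the derivative of $\Phi \actson E \ractson G \x \R^4$ is $(M,\II) \x (\R^2, \II_\omega)$, whose base is $(M \x \R^2)$, the base of $G \x \R^4$. Passing to $E' = E \comp (G \x \R^2)$ corresponds, at the level of bases, to the projection $M \x \R^2 \to M$ that forgets the symplectic $\R^2$-factor, since $G \x \R^2$ is exactly the bibundle implementing the Morita equivalence $(G \x \R^4) \sim G$ and its induced map on orbit spaces collapses the $\R^2$. The generalized complex reduction commutes with this composition because the $\R^2$-factor enters only through its standard symplectic structure $\II_\omega$, which is untouched by the reduction of $E$ by $\Phi$ (the $\Phi$-action and the $\R^2$-directions are independent). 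Hence the reduced structure on $E'/\Phi = M$ is precisely the $M$-factor of $(M,\II) \x (\R^2, \II_\omega)$, namely $(M,\II)$.

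The hard part will be verifying that the generalized complex reduction genuinely factors through the right composition—that is, that reducing $E'$ by $\Phi$ yields the same result as first reducing $E$ by $\Phi$ and then performing the real symplectic Morita composition with $G \x \R^2$ on the already-reduced structure. This requires checking that the reduction kernel $K' \subset \Tc E'$ for the $\Phi$-action decomposes compatibly with the product structure inherited from $E \x_{G\x\R^4} (G \x \R^2)$, so that the $\II_{\omega_E}$-invariant splitting descends correctly and the $\R^2$ symplectic directions split off cleanly. Once this compatibility is established, the identification of the derivative of $\Phi \actson E' \ractson G$ with $(M,\II)$ follows from the uniqueness of the reduced generalized complex structure and the fact that composition with a real symplectic Morita equivalence does not alter the holomorphic atlas $\Phi$.
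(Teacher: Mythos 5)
Your overall strategy is the same as the paper's: realize the passage from $E$ to $E' = E \comp (G\x\R^2)$ as a collapse of the $\R^2$-factor and show that generalized complex reduction by $\Phi$ commutes with that collapse. However, two points deserve attention. First, the opening appeal to Proposition \ref{unique derivative} is a misdirection: that proposition governs a change of holomorphic atlas over a \emph{fixed} real groupoid $G$ (the definition of equivalence of WHSG's requires an isomorphism of the real groupoids), whereas here the real groupoid itself changes from $G\x\R^4$ to $G$, so the two WHSG's are not equivalent in that sense and functoriality cannot be invoked directly. Second, and more substantively, the step you defer as ``the hard part'' --- that the $\Phi$-reduction factors through the right composition --- is the entire content of the lemma, and your proposal stops exactly where the paper's proof begins. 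The paper's device is to observe that the constant bisections of the groupoid $\R^4\toto\R^2$ form a group $\R^2$ acting on $E$ by diffeomorphisms with $E' = E/\R^2$; this action lifts to $\Tc E$ by pushforward and inverse pullback, and one checks that the lift preserves $\II_{\omega_E}$, preserves the reduction kernel $K$ of the $\Phi$-action and $K^\perp$, and covers the obvious $\R^2$-action on $\bb{A}\x\Tc\R^2$. From the last of these it follows that ``reduce by $\R^2$'' and ``reduce by $\Phi$'' commute, so the $\Phi$-action descends to $\Tc E'$ and its reduction is $(\bb{A},\II)$; one then verifies by inspection that this descended action coincides with the Courant action already carried by $\Tc E'$ as part of the WHSG structure (a point your sketch asserts but does not isolate). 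Your plan is compatible with all of this, but as written it records the statement to be proved rather than a proof of it.
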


\begin{proof}
For simplicity, we consider the constant sections of the groupoid $\R^4$, which are just the group $\R^2$.  Then $\R^2$ acts as a group on $E$ by diffeomorphisms.  We note here that $E' = E \comp (G \x \R^2)$ is just $E / \R^2$.

The group action $\R^2 \actson E$ lifts to an action of $\R^2$ on $\Tc E$ by pushforward and inverse pullback.  We claim that
\begin{enumerate}
\item $\R^2 \actson \Tc E$ preserves $\II_{\omega_E}$.
\item $\R^2 \actson \Tc E$ preserves $K$ (the reduction kernel of the $\Phi$ action) and $K^\perp$.
\item The action $\R^2 \actson K^\perp / K$ lifts the obvious action of $\R^2$ on $\bb{A} \x \Tc \R^2$. \label{lift claim}
\end{enumerate}

Let $S \subset TE$ be the tangents to the orbits of $\R^2$.  These orbits are symplectic and so, using the symplectic form on $E$, $\Tc S$ naturally sits inside $\Tc E$.  $\R^2 \actson \Tc E$ also preserves $\Tc S$.

Either upstairs on $\Tc E$, or downstairs on $\bb{A} \x \Tc\R^2$, we can quotient by the subbundles $\Tc S$ or $\Tc\R^2$ respectively, and then quotient by the action of $\R^2$.  Since both of these subbundles are GC-invariant, the generalized complex structure passes to these ``reductions.''

From Claim \eqref{lift claim} it follows that ``reduce by $\R^2$'' and ``reduce by $\Phi$'' commute.  In other words, the Courant action of $\Phi$ on $\Tc E$ passes to a Courant action of $\Phi$ on $(\Tc E / \Tc S) / \R^2$ (which is naturally isomorphic to $\Tc E'$), and the Courant reduction of $\Tc E'$ by this action of $\Phi$ gives the same result as the ``reduction'' $(\bb{A} \x 0_{\R^2}) / \R^2$, namely, $\bb{A}$ with generalized complex structure $\II$.

But $\Phi$ already has a Courant action on $\Tc E'$ coming from the WHSG structure.  By inspection we find that these two actions are the same.
\end{proof}

\subsection{Holomorphic covers}\label{holomorphic covers}

Let $(M,\II)$ be a generalized complex manifold whose underlying real Poisson structure, $P$, has rank $0 \bmod 4$.  Suppose, without loss of generality, that $\II$ is defined on $TM \dsum T^*M$ (with a possibly twisted bracket \eqref{bracket formula}).  Corollary \ref{parity corollary} says that, locally, $\II$ is gauge-equivalent to a holomorphic Poisson structure.  Thus, $M$ admits an open cover, $\sr{U} = \{U_i,\ldots\}$, with corresponding complex structures, $\{I_i,\ldots\}$, and $B$-fields, $\{B_i,\ldots\}$, such that
\begin{align}
\II |_{U_i} &= e^{B_i} \; \JJ_i \; e^{-B_i} \notag \\
&= \begin{pmatrix} 1 & 0 \\ B_i & 1 \end{pmatrix}
\begin{pmatrix} I_i & P|_{U_i} \\ 0 & -I_i^* \end{pmatrix}
\begin{pmatrix} 1 & 0 \\ -B_i & 1 \end{pmatrix}. \label{holomorphic cover data}
\end{align}
On each $U_i$, $P$ and $I_i$ define a holomorphic Poisson structure
\begin{align}
\pi_i = I_i P|_{U_i} + i P|_{U_i}.
\end{align}
We call the data $\{(U_i,B_i)\,,\,\ldots\}$ a \emph{holomorphic cover} of $(M,\II)$.

Given a holomorphic cover, we should not expect that the complex structures $I_i$ and $I_j$ agree on overlaps $U_{ij} = U_i \cap U_j$.  However, the holomorphic Poisson structures will be related by closed $B$-fields $B_{ij} = B_j - B_i$.  In Section \ref{holomorphic Poisson discussion}, we saw that this imposes certain relations, namely,
\begin{align}
I_i + PB_{ij} &= I_j  \qquad\textnormal{and} \label{QB1} \\
B_{ij}I_i + I_i^*B_{ij} + B_{ij}PB_{ij} &= 0. \label{QB2}
\end{align}

\subsection{The integration}\label{the integration}
Given a generalized complex structure, we will build a holomorphic localization, in the sense of Section \ref{localizations}, by taking a groupoid integrating the real Poisson structure, localizing with respect to a holomorphic cover, and then ``integrating'' the complex structures on the cover to each component of the localization in some way.

In \cite{LGSX2009} it is shown that a holomorphic Poisson structure is integrable to a holomorphic symplectic groupoid precisely when the real or imaginary part of the Poisson structure is integrable to a real symplectic groupoid.  Thus, if $(M,I,\pi)$ is a holomorphic Poisson manifold whose imaginary part has an integrating groupoid $G$, then $(I,\pi)$ has an integrating holomorphic symplectic groupoid.  It follows from Lie's second theorem for groupoids \cite{MackenzieXu1997} that we have \emph{local} uniqueness of integrations:

\begin{lem}\label{local integrations are equivalent}
If $G$ and $G'$ are symplectic groupoids, both integrating the Poisson manifold $(M,P)$, then there are neighbourhoods $U \subset G$  and $U' \subset G'$ of the identity sections such that $U \iso U'$ canonically.
\end{lem}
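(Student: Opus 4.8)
The plan is to deduce this from Lie's second theorem for groupoids \cite{MackenzieXu1997}, by first packaging the hypothesis ``$G$ and $G'$ both integrate $(M,P)$'' as a canonical isomorphism of Lie algebroids and then integrating it near the identity. As recalled in Section~\ref{lie groupoid section}, for a symplectic groupoid $(G,\omega)$ integrating $(M,P)$ the form $\omega$ canonically identifies $\Lie(G)$ with the cotangent Lie algebroid $T^*M$, with anchor $t_* = P$ and the bracket determined by $P$; the same holds for $(G',\omega')$. Composing these two canonical identifications yields a canonical isomorphism of Lie algebroids $\varphi \colon \Lie(G) \isoto \Lie(G')$ covering $\Id_M$.

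Next I would integrate $\varphi$. By the local form of Lie's second theorem for groupoids \cite{MackenzieXu1997}, every morphism of Lie algebroids integrates to a germ, along the identity section, of a Lie groupoid morphism, and this germ is unique. Applying this to $\varphi$ and to $\varphi^{-1}$ produces local morphisms $\Phi \colon U \to G'$ and $\Psi \colon U' \to G$ on neighbourhoods of the identity sections, inducing $\varphi$ and $\varphi^{-1}$ respectively. Since $\Phi$ covers $\Id_M$ and its vertical derivative along $\ker(ds)|_M$ is the isomorphism $\varphi$, the inverse function theorem makes $\Phi$ a local diffeomorphism near $M$; the uniqueness clause forces the germs of $\Psi\comp\Phi$ and $\Phi\comp\Psi$ to be the identities, so after shrinking $U$ and $U'$ we obtain a groupoid isomorphism $\Phi \colon U \isoto U'$. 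Because $\varphi$ is canonical and its integrating germ is unique, $\Phi$ is canonical.

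Finally I would check that $\Phi$ matches the symplectic structures, which is what makes the isomorphism meaningful for the later gluing. Both $\omega$ and $\omega'$ are multiplicative symplectic forms whose associated infinitesimally multiplicative (IM) $2$-forms, under the identifications $\Lie(G)\iso T^*M\iso\Lie(G')$, are the single tautological IM form encoding $P$. Since $\Phi$ integrates $\varphi$, it intertwines these IM forms, so $\Phi^*\omega'$ and $\omega$ are multiplicative forms on $U$ sharing the same IM data; by the local uniqueness in the correspondence between multiplicative and IM $2$-forms on source-connected groupoids \cite{bursztyn2004} we conclude $\Phi^*\omega' = \omega$ on a possibly smaller source-connected neighbourhood of the identity.

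I expect the genuine content to lie in this last step rather than in the bare groupoid isomorphism: once the canonical algebroid identification is set up, the diffeomorphism $\Phi$ is an immediate application of Lie~II, but upgrading it to a \emph{symplectic} isomorphism requires invoking the correct local multiplicative-to-IM form dictionary (equivalently a van~Est--type argument) to see that matching infinitesimal data forces the two multiplicative symplectic forms to coincide near the identity. The remaining care is bookkeeping about successive shrinkings of $U$ and $U'$ so that all three properties — diffeomorphism, groupoid morphism, and symplectomorphism — hold simultaneously on a common neighbourhood.
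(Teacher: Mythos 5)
Your proof is correct and takes essentially the same route as the paper, which offers no argument beyond the one-line assertion that the lemma ``follows from Lie's second theorem for groupoids'' \cite{MackenzieXu1997}; your construction of the canonical algebroid isomorphism $\Lie(G)\iso T^*M\iso\Lie(G')$ and its local integration is precisely the intended reading. Your extra step matching the symplectic forms via the multiplicative/IM-form correspondence of \cite{bursztyn2004} is a worthwhile addition, since that compatibility is what the paper implicitly uses in Lemma \ref{local integration of complex}.
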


As a consequence,
\begin{lem}\label{local integration of complex}
If $(G,\omega)$ is a real symplectic groupoid integrating $(M,P)$, and if $(I,\pi)$ is a holomorphic Poisson structure on $M$ with $\Im(\pi)=P$, then, on a sufficiently small neighbourhood of $\Id_G$, $\omega$ is the imaginary part of a unique holomorphic symplectic structure integrating $(I,\pi)$.
\end{lem}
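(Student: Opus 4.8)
The plan is to obtain both existence and uniqueness by combining the integrability criterion of \cite{LGSX2009} with the local rigidity of symplectic integrations recorded in Lemma \ref{local integrations are equivalent}.

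For existence, I would first invoke \cite{LGSX2009}: since the imaginary part $\Im(\pi) = P$ of the holomorphic Poisson structure integrates to the real symplectic groupoid $G$, the holomorphic Poisson manifold $(M, I, \pi)$ admits an integrating holomorphic symplectic groupoid $(H, \Omega_H = B_H + i\omega_H)$. Its imaginary part $(H, \omega_H)$ is then a real symplectic groupoid integrating $\Im(\pi) = P$, exactly like $(G,\omega)$. Applying Lemma \ref{local integrations are equivalent} to these two real symplectic integrations of $(M,P)$ yields a canonical symplectomorphism $\psi : U \isoto V$ between neighbourhoods $U \subset G$ and $V \subset H$ of the identity sections. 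Pulling back the complex structure and the holomorphic symplectic form $\Omega_H$ along $\psi$ equips $U$ with a holomorphic symplectic structure whose imaginary part is $\omega|_U$ and which integrates $(I,\pi)$, establishing existence on a neighbourhood of $\Id_G$.

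For uniqueness, suppose $\Omega_1 = B_1 + i\omega$ and $\Omega_2 = B_2 + i\omega$ are two holomorphic symplectic structures, defined near $\Id_G$, both integrating $(I,\pi)$ and both having imaginary part $\omega$. By Remark \ref{hol symp implies complex}, fixing $\omega$ makes each $\Omega_k$ equivalent to the datum of its complex structure $I_G^{(k)} = \omega^{-1}B_k$ on $G$, so it suffices to show $I_G^{(1)} = I_G^{(2)}$. Each $(G, I_G^{(k)}, \Omega_k)$ is a local holomorphic symplectic integration of the same holomorphic Poisson manifold $(M, I, \pi)$; the holomorphic version of Lie's second theorem \cite{MackenzieXu1997} (equivalently, the holomorphic analogue of Lemma \ref{local integrations are equivalent}) then furnishes a canonical biholomorphic symplectomorphism between them on a smaller neighbourhood of $\Id_G$, inducing the identity on the base $M$ and on the Lie algebroid $T^*M$. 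This map is in particular a symplectomorphism for the common imaginary form $\omega$ and a morphism of real symplectic groupoids integrating $(M,P)$; but the canonical such morphism between $(G,\omega)$ and itself supplied by real Lie II is the identity. Hence the canonical map is $\Id_G$, forcing $I_G^{(1)} = I_G^{(2)}$ and therefore $\Omega_1 = \Omega_2$.

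The main obstacle I anticipate is precisely this uniqueness step: one must ensure that the two \emph{a priori} distinct complex structures $I_G^{(k)}$, living on the one fixed real manifold $G$, are compared by a single map that is simultaneously the canonical holomorphic isomorphism and the identity real symplectomorphism. Making this rigorous requires matching the holomorphic and real incarnations of Lie's second theorem and exploiting that, once $\omega$ is fixed, Remark \ref{hol symp implies complex} collapses the holomorphic data entirely onto the complex structure, so that agreement of the underlying real symplectic groupoids together with agreement of the induced holomorphic Poisson targets pins the complex structure down uniquely.
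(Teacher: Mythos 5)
Your argument is correct and follows essentially the same route the paper intends: the paper derives this lemma directly "as a consequence" of the integrability criterion of \cite{LGSX2009} together with the local uniqueness statement of Lemma \ref{local integrations are equivalent}, which is exactly the combination you use for both existence and uniqueness. You have merely written out explicitly the transport of the holomorphic structure along the canonical local isomorphism and the Lie~II comparison that the paper leaves implicit.
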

(An analogous claim holds generally for holomorphic Lie algebroids.)

Since, \emph{a priori}, we only have local holomorphic symplectic structures, we will need some way to extend them to the whole groupoid.  A differential form $\theta$ defined on only a subset of a Lie groupoid is \emph{multiplicative} if the relation $m^*(\theta) = \b{p}_1^*(\theta) + \b{p}_2^*(\theta)$ holds wherever it is well-defined.  Then,
\begin{lem:appendix}
Let $G$ be an $s$-connected, $s$-simply-connected Lie groupoid, let $G_\sr{U}$ be its localization with respect to an open cover $\sr{U}$, let $\D G \subset G$ be an $s$-connected neighbourhood of $\Id_G$, and let $\D_\sr{U} G \subset G_\sr{U}$ be the neigbourhood of the \v{C}ech groupoid which localizes $\D G$.  Then,
\begin{enumerate}
\item A multiplicative form on $\D_\sr{U} G$ has a unique extension to a multiplicative form on $G_\sr{U}$.
\item If this form is (holomorphic) symplectic on $\D_\sr{U} G$ then it is (holomorphic) symplectic on $G_\sr{U}$.
\end{enumerate}
\end{lem:appendix}
This is a technical result about \emph{local Lie groupoids}, which we treat in Appendix \ref{local lie groupoids section}.

We will use the essential fact that Proposition \ref{modified complex groupoid}, on the modification of a multiplicative holomorphic symplectic form, goes through without change for forms which are only locally defined.  Now we may give the main construction:

\begin{thm}\label{construction of holomorphic localization}
Let $(M,\II)$ be a generalized complex manifold that admits a holomorphic cover $\sr{U} = \{(U_i,B_i), \ldots\}$.  Let $P$ be its underlying real Poisson structure, and suppose that $(M,P)$ integrates to an $s$-connected and $s$-simply-connected symplectic groupoid $G$.  Then the localization $G_\sr{U}$ is a holomorphic symplectic groupoid which differentiates, in the sense of Section \ref{holomorphic localizations section}, to $(M,\II)$.
\end{thm}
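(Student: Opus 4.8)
The plan is to equip the localization $G_\sr{U}$ with a multiplicative holomorphic symplectic form whose imaginary part is the (localized) real symplectic form $\omega$, and then verify via Proposition \ref{differentiating a localization} that the resulting WHSG $G_\sr{U} \actson E \ractson G$ differentiates to $(M,\II)$. By the extension result, Lemma~\ref{appendix lemma}, it suffices to produce such a form on the neighbourhood $\D_\sr{U} G$ of the \v{C}ech groupoid and check multiplicativity there; since $G$ is $s$-connected and $s$-simply-connected, it then extends uniquely to a multiplicative, still (holomorphic) symplectic, form on all of $G_\sr{U}$.

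First I would integrate the diagonal blocks. For each $i$, the holomorphic Poisson structure $(I_i,\pi_i)$ on $U_i$ has imaginary part $P$, so Lemma \ref{local integration of complex} produces, on a neighbourhood of $\Id_{ii}$, a unique holomorphic symplectic form $\Omega_i$ integrating $(I_i,\pi_i)$ with $\Im \Omega_i = \omega$. Viewing $\Omega_i$ through the covering map $\phi$ as a form near the identities over $U_i$ in $G$, I would define on each off-diagonal block $\D G_{ij}$ near $\Id_{ij}$
\[
\Omega_{ij} := \Omega_i + t^*(B_{ij}), \qquad B_{ij} = B_j - B_i.
\]
Because $B_{ij}$ satisfies the gauge condition \eqref{QB2}, the computation in the proof of Proposition \ref{modified complex groupoid}---which applies verbatim to locally defined forms, and in particular to the one-sided modification $C = t^*(B_{ij})$---shows that $\Omega_{ij}$ is again holomorphic symplectic with imaginary part $\omega$, and that $t$ becomes holomorphic for $I_j$ while $s$ remains holomorphic for $I_i$. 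The full modification relation $\Omega_j = \Omega_i + (t^*-s^*)(B_{ij})$, valid since $I_j = I_i + PB_{ij}$ by \eqref{QB1}, gives the symmetric expression $\Omega_{ij} = \Omega_j + s^*(B_{ij})$, so the definition does not privilege the source chart.

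The crux is multiplicativity near the \v{C}ech groupoid. On a composable pair in $G_{jk} \fx{s}{t} G_{ij}$, with product in $G_{ik}$, the three forms $\Omega_i,\Omega_j,\Omega_k$ are simultaneously defined over the triple overlap $U_i \cap U_j \cap U_k$---which is exactly why one must work on a neighbourhood of the entire \v{C}ech groupoid rather than of the identity section alone. I would verify $m^*\Omega_{ik} = \b{p}_1^*\Omega_{jk} + \b{p}_2^*\Omega_{ij}$ by expanding both sides using the definition of $\Omega_{ij}$, the multiplicativity of each $\Omega_i$, the matching identity $\b{p}_2^* t^* = \b{p}_1^* s^*$ for composable arrows, and the modification relation for $\Omega_j$ against $\Omega_i$; the cross terms cancel precisely because of the cocycle identity $B_{ij} + B_{jk} = B_{ik}$, which holds since $B_{ij} = B_j - B_i$. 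This produces a multiplicative holomorphic symplectic form on $\D_\sr{U} G$, which Lemma~\ref{appendix lemma} promotes to $G_\sr{U}$, exhibiting it as a holomorphic localization.

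Finally I would compute the differentiation with Proposition \ref{differentiating a localization}. Its first part returns, on each $U_i$, the holomorphic Poisson structure $(I_i,\pi_i) = \JJ_i$ differentiating $G_{ii}$. For its second part, since multiplicative forms vanish on the identity section, the pullback of $\Re \Omega_{ij}$ to $\Id_{ij}$ reduces to $\Id_{ij}^* t^*(B_{ij}) = B_j - B_i$, so $\Tc U_i$ is glued to $\Tc U_j$ by the $B$-transform $B_j - B_i$. This is exactly the gluing data recorded by the holomorphic cover \eqref{holomorphic cover data}, where $\II|_{U_i} = e^{B_i}\, \JJ_i\, e^{-B_i}$; hence the differentiation is canonically $(M,\II)$. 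I expect the main obstacle to be the multiplicativity check of the third paragraph: making the one-sided modifications on the off-diagonal blocks cohere under groupoid multiplication, which is what forces both the cocycle condition on the $B_{ij}$ and the passage to a neighbourhood of the whole \v{C}ech groupoid.
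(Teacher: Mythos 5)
Your proposal is correct and follows essentially the same route as the paper's proof: integrate the diagonal blocks using the local uniqueness of Lemma \ref{local integration of complex}, modify each off-diagonal block by the pullback of $B_{ij}$ through $t$ (which is exactly what the paper's two-point-cover modification of Proposition \ref{modified complex groupoid} produces there), extend to all of $G_\sr{U}$ by Lemma \ref{appendix lemma}, and recover $\II$ by reading off the gluing $B$-field on the special sections $\Id_{ij}$ as in Remark \ref{modification gives derivative}. The only presentational divergence is the multiplicativity step, which you verify by the direct cocycle computation using $\b{p}_1^*s^* = \b{p}_2^*t^*$ and $B_{ij}+B_{jk}=B_{ik}$, whereas the paper localizes over triple intersections and invokes the local version of Proposition \ref{modified complex groupoid} --- the same underlying cancellation, made explicit in your version.
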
 

\begin{proof}
Denote $\Phi := G_\sr{U}$.  By Lemma \ref{local integration of complex}, the holomorphic Poisson data $(I_i,\pi_i)$ on the disjoint union $X := \bigsqcup \sr{U}$ integrates to a holomorphic symplectic structure, $\Omega_{\D \Phi}$, on a neighbourhood, $\D \Phi$, of $\Id_\Phi$.  However, for Lemma \ref{appendix lemma} to apply, we need to find a (multiplicative) holomorphic symplectic structure on a neighbourhood, $\D_\sr{U} \Phi$, of the whole \v{C}ech groupoid $\Id_\sr{U}$.

\subsubsection*{Restriction of the localization is localization of the restriction}

In the disjoint union $X$, there are two copies of $U_{ij} := U_i \cap U_j$, which we denote $U_{ij}^i \subset U_i$ and $U_{ij}^j \subset U_j$.  The restriction of the localization,
\begin{align}
\Phi^{ij} := \Phi|_{U_{ij}^i \sqcup U_{ij}^j},
\end{align}
is the same as the localization of the restriction, i.e., $G|_{U_{ij}}$ localized over the double cover $\{U_{ij}^i,U_{ij}^j\}$.

Similarly, we can get a neighbourhood, $\D^{ij} \Phi \subset \Phi^{ij}$, of the \v{C}ech groupoid restricted to $U_{ij}^i \sqcup U_{ij}^j$ by taking a neighbourhood, $\D G|_{U_{ij}}$, of $\Id_{U_{ij}}  \subset G$, and localizing over the double cover $\{U_{ij}^i,U_{ij}^j\}$.

\subsubsection*{Local holomorphic symplectic structure}

$\D G|_{U_{ij}}$ has a holomorphic symplectic form, $\Omega_i$, integrating $(I_i,\pi_i)$, and this localization construction copies $\Omega_i$ to a form $\Omega_{ij}^i$ on $\D^{ij} \Phi$.  As it stands, $\Omega_{ij}^i$ will agree with $\Omega_{\D \Phi}$ on $\D \Phi|_{U_{ij}^i}$, but not on $\D \Phi|_{U_{ij}^j}$ (where $\Omega_{\D \Phi}$ integrates $(I_j,\pi_j)$).

Therefore, we modify $\Omega_{ij}^i$, as in Propositions \ref{modified complex groupoid} and \ref{modification is equivalent}, by the $B$-field which is $0$ on $U_{ij}^i$ and $B_{ij}$ on $U_{ij}^j$.  Since this is the $B$-field which takes $(I_i,\pi_i)$ to $(I_j,\pi_j)$, and since integrations are locally unique (Lemma \ref{local integration of complex}), the modification of $\Omega_{ij}^i$ agrees with $\Omega_{\D \Phi}$ now on both $\D \Phi|_{U_{ij}^i}$ and $\D \Phi|_{U_{ij}^j}$.  Combining this construction for all pairs $(i,j)$, we get a holomorphic symplectic structure, $\Omega_{\D_\sr{U}\Phi}$, on a neighbourhood, $\D_\sr{U} \Phi$, of the \v{C}ech groupoid, whose imaginary part is just the real symplectic structure on $G_\sr{U}$.

\subsubsection*{Multiplicativity}

It remains to show that $\Omega_{\D_\sr{U}\Phi}$ is multiplicative.  To this end, we apply the analogous construction to triple intersections:

$U_{ijk} = U_i \cap U_j \cap U_k$ makes three appearances in $X$, which we call $U_{ijk}^i$, $U_{ijk}^j$ and $U_{ijk}^k$.  We denote by $\Phi^{ijk}$ the restriction of $\Phi$ to their disjoint union.  Similarly to the above, $\Phi^{ijk}$ is just the localization of $G|_{U_{ijk}}$ with respect to the triple cover.  Once again, we can integrate one of the three holomorphic Poisson structures to give a holomorphic symplectic structure on a neighbourhood, $\D^{ijk}\Phi$, of the \v{C}ech groupoid, and then adjust by a $B$-field so that this holomorphic structure matches the structure $\Omega_{\D_\sr{U}\Phi}$ constructed previously.

But Proposition \ref{modified complex groupoid}, in its local version, tells us that this modified holomorphic symplectic structure is multiplicative (wherever the multplication is defined on $\D^{ijk}\Phi$).  So $\Omega_{\D_\sr{U}\Phi}$ is multiplicative, possibly on a restriction of $\D_\sr{U}\Phi$.  Then we are in the case of Lemma \ref{appendix lemma}, and so $\Phi$ is holomorphic symplectic (with the imaginary part coming from the real symplectic structure on $G$).

\subsubsection*{Checking the derivative}

We claim that the holomorphic localization we have constructed differentiates to the original generalized complex structure.  This is a straightforward consequence of how the GC structure was presented, along with how the modified holomorphic symplectic structure determines a gluing B-field.  As we discussed in Remark \ref{modification gives derivative}, modifying the holomorphic symplectic structure on the groupoid by the pullback of $B_{ij}$ shifts the real 2-form on the special section $\Id_{ij}$ by precisely $B_{ij}$.  As in Proposition \ref{differentiating a localization}, to differentiate the holomorphic localization, we differentiate each $G_{ii}$ to give a holomorphic Poisson structure on $U_i$; then, for any $i,j$, we pull back the holomorphic symplectic form to the special section, $\Id_{ij}$, giving a $B$-field with which to glue the holomorphic Poisson structure on $U_i$ to the one on $U_j$.  This gives us precisely the $B$-field we started with, allowing us to reconstruct the original GC structure.
\end{proof}

\section{Examples}\label{examples section}

In this section we give some examples of integrations of generalized complex structures.  In Section \ref{twisted complex section}, we describe how a \emph{twisted complex structure} can be seen in the integrating holomorphic groupoid; in Section \ref{symplectic example section}, we consider the case of a symplectic manifold; finally, in Section \ref{hopf example} we consider a Hopf surface endowed with a generalized complex structure which does not admit a global holomorphic gauge.


\subsection{Twisted complex manifolds}\label{twisted complex section}
Already for generalized complex manifolds of purely complex type we can see, in some cases, a nonstandard holomorphic structure on the groupoid.

Let $(M,I)$ be a complex manifold and $h$ a real, closed $(2,1) + (1,2)$--form $h$.  This determines a \emph{twisted complex structure}, i.e., the generalized complex structure $\II = \II_{I,h}$ determined by the complex structure, but living on the twisted Courant algebroid $(\Tc M, h)$.  $\II$ is locally equivalent to $\II_I$, the standard realization of the complex structure on the untwisted $\Tc M$, i.e., $(M,\II)$ has a holomorphic cover, $\{(U_i,B_i), \,\ldots \}$, such that $\II_I|_{U_i} = e^{B_i} \II e^{-B_i}$.  The $B_i$'s will not be closed in general, but their differences, $B_{ij} := B_j - B_i$, will be.  Thus, they determine a class in $H^{1}(M,\Omega^{1,1}_{cl})$, which determines the isomorphism class of the twisted structure $\II$.

The holomorphic cover of $(M,\II)$ allows us to construct its integration via holomorphic localization.  First, we integrate the real Poisson structure, which in this case is trivial, to $G = T^*M$.  Then we localize to $G_\sr{U}$, endow it with a complex structure coming from that on $T^*G$, and then modify the complex structure on the intermediate components, $G_{ij}$, by pulling back the $B$-fields $B_{ij}$.  The point is that, while locally about $\Id$, $G_\sr{U}$ is just the usual holomorphic tangent bundle, globally it must be (holomorphically) Morita-inequivalent to $T^*M$, if $h$ is cohomologically nontrivial.

Of course, each $G_{ij}$ is equivalent as a real symplectic bibundle to (a cover of) $T^*U_{ij}$.  Not only that, but the fibers have the standard complex structure coming from $T^*M$, and the projections $s$ and $t$ are holomorphic.  Nonetheless, the data of the nontrivial gluing of $\Tc U_i$ to $\Tc U_j$ is encoded in the failure of the special section $\Id_{ij}$ to be holomorphic under the modified complex structure.

\subsection{Symplectic manifolds}\label{symplectic example section}

A symplectic groupoid $G$ integrating a symplectic manifold $(M,\omega)$ is Morita equivalent to a single point with a discrete isotropy group over it (being a quotient of $\pi_1(M)$).  Therefore, there is only one way (up to equivalence) to put a weak holomorphic structure on such a $G$.  This is not surprising since, if the complex structure is transverse to the symplectic leaves, then in this case there is nothing to say.

However, up to parity issues, one \emph{can} choose a holomorphic localization of $(M,\omega)$, and then apply our construction to give a holomorphic structure on the localization $G_\sr{U}$.  The point is that there is a wide variety of holomorphic atlases representing the trivial holomorphic structure.  These are interesting if one wants a notion of ``weakly holomorphic subobject,'' since a given subgroupoid of $G$ might be a \emph{holomorphic} subgroupoid in some, but likely not all, holomorphic atlases.  We don't go into detail here, but in upcoming work it is shown that such weakly holomorphic Lagrangian subobjects are precisely the integrations of \emph{generalized complex branes}.

Holomorphic localizations of symplectic manifolds are potentially useful for a number of other purposes, such as deformation quantization.

\subsection{Spinor formalism}\label{spinor section}
We review very briefly an alternative formalism for generalized complex geometry, which we make use of in the example of Section \ref{hopf example}.

The $+i$-eigenbundle, $L \subset \C \tens \Tc M$, of a generalized complex structure is a complex \emph{Dirac structure}, i.e., a maximal-isotropic, Courant involutive subbundle, which furthermore has \emph{real rank zero}, i.e., $L \cap \bar{L} = 0$.  Conversely, any complex Dirac structure of real rank zero determines a generalized complex structure.

$T_\C M \dsum T_\C^*M$ has a \emph{Clifford action} on the mixed degree complex forms, $\Omega_\C^\bullet(M)$, namely,
\begin{align}
(X+\xi) \cdot \rho = \iota_X \rho + \xi \^ \rho.
\end{align}
A \emph{pure spinor} is a mixed degree complex form $\rho$ for which $\Ann(\rho)$ under this action is a maximal isotropic subbundle $L \subset T_\C M \dsum T_\C^*M$.  Such maximal isotropics are in 1-1 correspondence with complex line bundles of pure spinors.  $L \cap \bar{L} = 0$ if and only if the \emph{Mukai pairing} (not discussed here; see \cite{Gualtieri2011}) of the line bundle $\kappa$ with $\bar{\kappa}$ vanishes.  Finally, $L$ is Courant involutive for some twisting 3-form $H$ if and only if
\begin{align}\label{integrability of spinors}
(d + H\^)\, \Gamma(\kappa) \in (TM \dsum T^*M) \cdot \Gamma(\kappa).
\end{align}
In particular, it is sufficient that $\kappa$ has local, $(d+H\^)$--closed generators, though this is not necessary.

Thus, generalized complex structures are in 1-1 correspondence with complex pure spinor line bundles which satisfy the Mukai/real-rank-zero condition and condition \eqref{integrability of spinors}.

A complex structure, represented as a GC structure via a spinor bundle, corresponds to its canonical bundle in the usual sense.  A symplectic structure, as in \eqref{complex and symplectic example}, is generated by the spinor
\begin{align}\label{symplectic generator}
e^{i\omega} := 1 + i\omega - \tfrac{1}{2} \omega\^\omega + \ldots
\end{align}
Given a local generator, $\zeta$, of the canonical bundle of a complex structure, a holomorphic Poisson structure $\pi$ determines a GC structure generated by the spinor
\begin{align}\label{holomorphic Poisson generator}
e^\pi \cdot \zeta = \zeta + \iota_\pi \zeta + \tfrac{1}{2} \iota_\pi \iota_\pi \zeta + \ldots.
\end{align}

A $B$-transform acts on a spinor $\rho$ by $\rho \mapsto e^B \^ \rho$.

\subsection{Hopf surface}\label{hopf example}

Let $X = \left(\C^2 \setminus \{0\}\right) / \Z$, where $1 \in \Z$ acts on $\C^2 \setminus \{0\}$ via multiplication by 2.  $X$ is a primary Hopf surface.  We will define a generalized complex structure $\II$ on $X$ by first describing it on $\C^2 \setminus \{0\}$ and then passing to the quotient.

Let $x_1,x_2$ be the standard coordinates for $\C^2$.  We take $\II$ corresponding to the pure spinor line bundle generated by the spinor,
\begin{align}\label{hopf spinor}
\rho &= 1 \,+\, \frac{1}{R^2} \left( \frac{2x_1}{\bar{x}_2} d\bar{x}_1 \^ d\bar{x}_2 \,+\, dx_1 \^ d\bar{x}_1 \,+\, dx_2 \^ d\bar{x}_2 \right) \\
&\hspace{4em} \qquad\qquad +\, \frac{1}{R^4} dx_1 \^ d\bar{x}_1 \^ dx_2 \^ d\bar{x}_2, \notag
\end{align}
where $R^2 := x_1 \bar{x}_1 + x_2 \bar{x}_2$.  This is just $e^C = 1 + C + \tfrac{1}{2}C \^ C + \ldots$, where $C$ is an almost-everywhere-defined 2-form,
\begin{align}\label{2-form for hopf}
C = \frac{1}{R^2} \left( \frac{2x_1}{\bar{x}_2} d\bar{x}_1 \^ d\bar{x}_2 \,+\, dx_1 \^ d\bar{x}_1 \,+\, dx_2 \^ d\bar{x}_2 \right).
\end{align}
Both $\rho$ and $C$ are defined whenever $\bar{x}_2 \neq 0$.  Near $\bar{x}_2 = 0$, $\II$ is given by $\rho' = \tfrac{\bar{x}_2}{x_1}$, which generates the same line bundle.  This defines an (almost) generalized complex structure on $\C^2 \setminus \{0\}$, and since $\rho$ and $C$ are homogeneous of degree $0$, they pass to the quotient $X$.

$\rho$ is $(d+H\^)$--closed for the \emph{real} 3-form
\begin{align}\label{dC}
-H = dC = \frac{1}{R^4}\Big[ (x_2 d\bar{x}_2 - \bar{x}_2 dx_2) dx_1 \^ d\bar{x}_1 \,+\, 
(x_1 d\bar{x}_1 - \bar{x}_1 dx_1) dx_2 \^ d\bar{x}_2 \Big],
\end{align}
therefore $\II$ is integrable on the $H$-twisted Courant aglebroid.

In order to construct the weakly holomorphic integration of $(X,\II)$, we need local presentations of $\II$ as holomorphic Poisson structures.  Note that $\II$ does \emph{not} admit a gauge in which it is \emph{globally} holomorphic: if it did, we would have a holomorphic Poisson structure whose vanishing locus (in our coordinates, $\{x_2=0\}$), was a connected, reduced anti-canonical divisor.  Such a divisor does not exist on any primary Hopf surface.  So our complex gauges will only be locally defined, and will determine complex structures different from the one coming from $\C^2 \setminus \{0\}$.

\begin{rem}
This generalized complex structure was first described by Gualtieri \cite{Gualtieri2014} as one part of a generalized Kahler structure.  The presentation in \cite{Gualtieri2014} is in terms of bi-Hermitian data.  To translate from those data to the spinor \eqref{hopf spinor} we have given here, one must use the bi-Hermitian--to--generalized-Kahler formula \cite[Section 6]{Gualtieri2014} , then find the $+i$-eigenbundle and thence the associated pure spinor.
\end{rem}

\subsubsection{Local form near $x_2 = 0$}\label{form near x2=0}
The imaginary part of $C$ is a symplectic form defined away from $\{x_2=0\}$, and its inverse is the underlying real Poisson structure of $\II$.  It has a coordinate expression as
\begin{align}\label{Im(C)}
2\Im(C) = C-\Bar{C} &= d\left(\log\frac{\bar{x}_1}{x_1}\right) \^ d\left(\log\frac{\bar{x}_2 x_2}{R^2}\right) 
 + d\left(\log R^2\right) \^ d\left(\log\frac{\bar{x}_2}{x_2}\right). 
\end{align}
$C$ has the same imaginary part as the 2-form
\begin{align}
W &:= d\log\left(\frac{\bar{x}_1 R^2}{x_1}\right) \^ d\log\left(\frac{\bar{x}_2}{R}\right).
\end{align}
Thus, they differ by the real B-field
\begin{align}
B_2 := W - C = \frac{x_2 \bar{x}_2}{2R^2} d\log\left(\frac{\bar{x}_1}{x_1}\right) d\log\left(\frac{\bar{x}_2}{x_2}\right).
\end{align}

We define new holomorphic coordinates
\begin{align}
w_1 = \log\left(\bar{x}_1 R^2 / x_1\right)  \quad\textnormal{and}\quad  w_2 = \bar{x}_2 / R,
\end{align}
with respect to which $W = d w_1 \^ d \log w_2$ is a (singular) holomorphic symplectic form.  The range for these coordinates is $w_1 \in \C, |w_2| < 1$, and the $\Z$-action, $(w_1,w_2) \mapsto (w_1 + 4n, w_2)$, is holomorphic (so they do in fact define a complex structure on $X_2 := X \setminus \{x_1=0\}$).  The transformation $(x_1,\bar{x}_1,x_2,\bar{x}_2) \mapsto (w_1,\bar{w}_1,w_2,\bar{w}_2)$ has Jacobian determinant $4/R^4$, which never vanishes on $\C^2 \setminus \{0\}$, and so these coordinates are nondegenerate away from $x_1 = 0$ (where they are not defined).

If we transform $\II$ by $e^{B_2}$, we have a generalized complex structure $\II_2$ on $X_2$ generated by the spinor
\begin{align*}
w_2 e^{B_2} \rho &= w_2 e^W \\
&= w_2 + dw_1 \^ dw_2,
\end{align*}
which is induced by the holomorphic Poisson structure $w_2 \del_{w_1} \^ \del_{w_2}$ (which is also $\Z$--invariant).  Note that $B_2$ is not closed, and compensates for the curvature $H$, so that $\II_2$ in this gauge is untwisted.

\begin{rem}
We do not here give details on how we found this local holomorphic symplectic structure.  Briefly, the expression \eqref{Im(C)} can be thought of as a Darboux form in singular coordinates, and it is straightforward to write a real Darboux form as the imaginary part of a holomorphic symplectic structure.
\end{rem}

\subsubsection{Local form near $x_1 = 0$}\label{form near x1=0}

The formulas near $x_1=0$ are more complicated than those given above.  We define some intermediate variables:
\begin{align}
\begin{array}{cc}
p_1 = \frac{1}{2} \left( \frac{x_1}{\bar{x}_2} + \frac{\bar{x}_1}{x_2} \right) & q_1 = \frac{1}{2} \left( \frac{x_1}{\bar{x}_2} - \frac{\bar{x}_1}{x_2} \right) \\
p_2 = x_2\bar{x}_2 & q_2 = \frac{\bar{x}_2}{x_2}
\end{array}
\end{align}
Then we define a pair of complex coordinates near $x_1=0$:
\begin{align}
z_1 = \frac{p_2\left(\sqrt{p_1^2+1} + q_1\right)}{\sqrt{p_1^2+1} - q_1}, \qquad z_2 = q_2 \left(p_1 + \sqrt{p_1^2 + 1}\right).
\end{align}
The transformation $(x_1,\bar{x}_1,x_2,\bar{x}_2) \mapsto (z_1,\bar{z}_1,z_2,\bar{z}_2)$ has vanishing Jacobian determinant along two level sets of $\Re\left(\frac{x_1}{\bar{x}_2}\right)$.  In any case, this does not occur in some tubular neighbourhood of $\{x_1=0,x_2\neq 0\}$ in $\C^2 \setminus \{0\}$, whose quotient by the $\Z$--action is some $X_1 \subset \left(X \setminus \{x_2=0\} \right)$.  The $\Z$--action is, once again, holomorphic in these coordinates, and so they determine a complex structure on $X_1$.

On $X_1$, we have the holomorphic symplectic structure $Z = d\log z_1 \^ d\log z_2$, which has the same imaginary part as $C$, and a real 2-form, $B_1 := Z - C$, relating them.  Then $\II_2 := e^{B_1}\II e^{-B_1}$ on $X_1$ is generated by the pure spinor
\begin{align}
z_1 z_2\, e^{B_1} \cdot \rho &= z_1 z_2\, e^{\frac{dz_1 \^ dz_2}{z_1z_2}} \notag \\
&= z_1 z_2 + dz_1 dz_2.
\end{align}
This corresponds to the ($\Z$--invariant) holomorphic Poisson structure ${z_1 z_2\, \del_{z_1} \del_{z_2}}$.  ($z_1$ and $z_2$ vanish nowhere in $X_1$, so in fact this is nondegenerate.)

Once again, $B_1$ is not closed, since $\II$ is twisted whereas $\II_2$ is not.  However, $B_{12} := B_2 - B_1$, relating the unwtisted structures $\II_1$ and $\II_2$, \emph{is} closed.

\subsubsection{Groupoid description}

We describe the integration in terms of the holomorphic localization.  In \cite{Gualtieri-Li}, there is a construction and classification of integrations for log-symplectic manifolds, namely, Poisson manifolds $(M^n,\pi)$ for which the section $\pi^n$ of $\^ ^n TM$ is transverse to the zero section.  Then $\pi$ is nondegenerate almost everywhere and, near its degeneracy locus, $\pi$ has a local model of the form $x\, \del_x \^ \del_y + \ldots$.

As we saw, our $X_2 \subset X$ has such a linear model---in local holomorphic coordinates.  The construction in \cite{Gualtieri-Li} works just as well for holomorphic log-symplectic structures, and even though $X$ itself is not holomorphic, the construction will still go through.

Away from $D := \{x_2 = 0\} \subset X$, the Poisson structure is nondegenerate, and so we start by considering the $s$-connected and $s$-simply-connected groupoid integrating $T^*X \iso TX$, namely, the fundamental groupoid, $\Pi_1(X)$, of paths modulo homotopies.  In general, if $\b{c} : \tilde{X} \to X$ is the universal cover of $X$, then $\Pi_1(X)$ may be identified with $\tilde{X} \x \tilde{X}$ modulo diagonal deck transformations, with $t = \b{c} \comp \b{p}_1$ and $s = \b{c} \comp \b{p}_2$.  In this case, the universal cover of $X$ is $\tilde{X} := \C^2 \setminus \{0\}$.

On $X$ (and, indeed, on $\C^2 \setminus \{0\}$), we have the real, almost-everywhere-defined symplectic structure $\omega := \Im(C)$, and the real Poisson structure $P = \omega^{-1}$.  If we pull back $\omega$ to $t^*(\omega) - s^*(\omega)$ on $\Pi_1(X)|_{X \setminus D}$, then the resulting symplectic groupoid differentiates to $P$ on $X \setminus D$.

Similarly, consider the holomorphic Poisson structures $\pi_1 = z_1 z_2\, \del_{z_1} \^ \del_{z_2}$ on $X_1$ and $\pi_2 = w_2\, \del_{w_1} \^ \del_{w_2}$ on $X_2$, with their holomorphic log-symplectic structures $Z$ and $W$ respectively.  Using the cover $\sr{X} := \{X_1,X_2\}$, we localize $\Pi_1(X)$ to
\begin{align}
\tilde{\Phi} := \Pi_1(X)_\sr{X} \toto X_1 \sqcup X_2,
\end{align}
with components $\tilde{\Phi}_{11}$, $\tilde{\Phi}_{22}$, $\tilde{\Phi}_{12}$ and $\tilde{\Phi}_{21}$.  The holomorphic symplectic form equal to $Z$ on $X_1$ and $W$ on $X_2$ pulls back through $t^* - s^*$ to give a holomorphic symplectic form $\Omega$ on $\tilde{\Phi}$---everywhere except $s^{-1}(D) \cup t^{-1}(D)$---along with an induced complex structure.  $\Omega$ differentiates to the given holomorphic Poisson structure on $(X_1 \sqcup X_2) \setminus D$.

\subsubsection{The groupoid near the singular locus}
$\Omega$ does not extend to all of $\tilde{\Phi}$, so we must change the groupoid near $D$.  This modification has been studied in, for example, \cite{Gualtieri-Li}, and from \cite{RadkoShlyakhtenko} we get the following coordinate description.

On $X_2$, we have the holomorphic coordinates $(w_1,w_2)$ defined in Section \ref{form near x1=0}, and we define a groupoid $\Phi_{22} \toto X_2$ with holomorphic coordinates $(p_1,p_2,w_1,w_2)$.  The structure maps are
\begin{align}
s(p_1,p_2,w_1,w_2) &= (w_1,w_2)  \\
  t(p_1,p_2,w_1,w_2) &= (w_1 + w_2 p_1, w_2 e^{p_2}) \notag \\
m\left((p'_1,p'_2,w'_1,w'_2),\, (p_1,p_2,w_1,w_2)\right) &= (p_1 + e^{p_2} p_1',\, p_2 + p_2',\, w_1,\, w_2) \notag \\
 \textnormal{(when the product is defined)} \notag \\ 
\textnormal{and}\quad \Inv(p_1,p_2,w_1,w_2) &= (-p_1e^{-p_2},\, -p_2,\, w_1+w_2p_1,\, w_2 e^{p_2}),  \notag
\end{align}
and the multiplicative, holomorphic symplectic form is
\begin{align}
\Omega &= t^*(dw_1 \^ d\log w_2) - s^*(dw_1 \^ d\log w_2) \notag \\
&= d(w_1 + w_2 p_1) \^ dp_2 + dp_1 \^ dw_2,
\end{align}
which extends to the singular locus $\{w_2=0\}$.

Note that this structure is equivariant for the diagonal deck transformations \note{???}, namely, the $\Z$--action $(p_1,p_2,w_1,w_2) \mapsto {(p_1,p_2,w_1 + 4n, w_2)}$.  Thus, as long as we restrict the coordinates such that $|w_2| < 1$ and $\left|w_2 e^{p_2} \right| < 1$, this does in fact define a Lie groupoid $\Phi_{22} \toto X_2$.

The groupoid homomorphism
\begin{align}
(t,s) : \Phi_{22}|_{X_2 \setminus D} \to \{X \setminus D\} \x \{X \setminus D\}
\end{align}
lifts to an injective homomorphism
\begin{align}
\Phi_{22}|_{X_2 \setminus D} \into \Pi_1(X)|_{X \setminus D}
\end{align}
by the construction of $\Pi_1(X)$ as a quotient of $\tilde{X} \x \tilde{X}$.  Thus, $\Phi_{22}$ glues to $\tilde{\Phi}_{22}$ along $\Phi_{22}|_{X_2 \setminus D}$, giving a global lie groupoid
\begin{align}
\Phi := \left(\Phi_{22} \sqcup_{X_2 \setminus D} \tilde{\Phi}_{22}\right) \,\sqcup\, \tilde{\Phi}_{11} \sqcup \tilde{\Phi}_{12} \sqcup \tilde{\Phi}_{21}.
\end{align}

$\Phi$ is a holomorphic symplectic groupoid integrating the holomorphic Poisson structure on $X_1 \sqcup X_2$.  To get the underlying real groupoid of which it is a localization, we glue $\Phi_{22}$ (non-holomorphically) to $\Pi_1(X \setminus D)$ similarly to above.

\appendix

\section{Multiplicative structures on local Lie groupoids}\label{local lie groupoids section}

In this section, we explain some general theory about the ``integration'' of multiplicative structures from local Lie groupoids to global Lie groupoids.  In the analogous, more typical setting (eg., \cite{Ortiz}) one has an $s$-connected and $s$-simply-connected Lie groupoid and some kind of multiplicative structure on its Lie algebroid (which one may view as an infinitesimal neighbourhood of the identity section), and one shows that said structure integrates to the groupoid.

Our reasons for studying this local-to-global correspondence of structures is as follows.  In the construction of Theorem \ref{main theorem}, we wish to build holomorphic localizations of a real symplectic groupoid.  However, we will not initially be able to produce a holomorphic symplectic structure on the whole of the localized groupoid.  (They will not be $s$-connected, so standard integration results don't apply.)  Instead, by using local integration results and the gauge transform of Section \ref{B-field section}, we will produce multiplicative holomorphic symplectic structures on only a local piece of the groupoid.  The results of this section, then, will allow us to extend these structures to the whole groupoid.

\begin{defn}
	A \emph{local Lie groupoid}, $\D G \toto M$, is the same as a Lie groupoid, except that multiplication need only be defined in an open set, $\D^2 G \subset \D G \x_M \D G$, containing $\Id \x_M \D G \cup \D G \x_M \Id$.  Identities and inverses (satisfying the usual conditions) should still exist in all cases, $\D^2 G$ should be $\Inv \x \Inv$--invariant, and whenever one side of the associativity equation is defined, so is the other (and they are equal).  Structure maps are smooth and $s$ and $t$ are surjective submersions.  Multiplicative structures are defined in the same way as for Lie groupoids.
\end{defn}

\subsection{Formal completions}

In a local Lie groupoid there may be pairs, $(h,g)$, which are \emph{formally composable} in the sense that $s(h) = t(g)$, but which don't lie in $\D^2 G$, thus the product $h\cdot g$ is not defined.

A local Lie groupoid, $\D G$, generates a groupoid, $\pair{\D G}$---the \emph{formal completion} of $G$---consisting of composable sequences modulo the relations $m : \D^2 G \to \D G$.  We can see this arrow space as a certain colimit.  Let
$$\D G^{(n)} = \underbrace{\D G \tensor[_t]{\x}{_s} \D G \tensor[_t]{\x}{_s} \ldots \tensor[_t]{\x}{_s} \D G}_{n \;\text{times}}.$$
For $0<i<n$, let $\widetilde{\D G}^{(n)}_i$ be the same as $\D G^{(n)}$ except with $\D^2 G$ replacing $\D G \tensor[_t]{\x}{_s} \D G$ at what would have been the $i$-th and $i+1$-th entries.  The inclusion $\D^2 G \into \D G \tensor[_t]{\x}{_s} \D G$ induces an inclusion $\widetilde{\D G}^{(n)}_i \into \D G^{(n)}$.

If it is smooth then, as a manifold, $\pair{\D G}$ is the colimit in the category of smooth manifolds of the diagram consisting of every $\D G^{(n)}$ and every $\widetilde{\D G}^{(n)}_i$, with arrows consisting of all inclusions described above, as well as \emph{local compositions},
\begin{align*}
\begin{diagram}[width=2em,height=2em]
\wt{\D G}^{(n+1)}_i & = \quad & \ldots  \tensor[_t]{\x}{_s} \D^{2} G \tensor[_t]{\x}{_s}  \ldots \\
& & \dTo^{m^n_i} \\
\D G^{(n)} & = \quad & \ldots  \tensor[_t]{\x}{_s} \D G \tensor[_t]{\x}{_s}  \ldots \\
\end{diagram}
\end{align*}
This colimit description uniquely determines $\pair{\D G}$'s smooth structure \emph{if it exists}.  In this representation, the groupoid product may be expressed as concatenation of sequences.  

\begin{rem}
	Roughly speaking, a local Lie groupoid is to its formal completion as a Lie algebroid is to its $s$-connected, $s$-simply-connected integration: elements of $\pair{\D G}$ may be factored into products of ``small'' elements (in $\D G$), uniquely up to a notion of homotopy; similarly, elements of the s.s.c. integrating groupoid may be factored into ``products of infinitesimal elements,'' i.e., Lie algebroid paths, uniquely up to homotopy.  Consequently---furthering the analogy---multiplicative structures on $\D G$ may be extended to multiplicative structures on $\pair{\D G}$, as we shall see.
\end{rem}

There is an obvious notion of \emph{local Lie groupoid homomorphism}.  Such a local homomorphism, $\phi : \D G \to \D H$, extends to a unique Lie groupoid homomorphism, $\tilde{\phi} : \pair{\D G} \to \pair{\D G}$ (if the completions are smooth).

\begin{lem}\label{local generates global}
	If $G \toto M$ is an $s$-connected and $s$-simply-connected Lie groupoid, and if $\D G \subset G$ is an $s$-connected local Lie groupoid, then $\pair{\D G} \iso G$ as Lie groupoids.
\end{lem}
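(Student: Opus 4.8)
The plan is to exhibit the canonical map out of the formal completion and prove it is an isomorphism fibrewise over the source. Since multiplication in $\D G$ is the restriction of that in $G$, the inclusion $\iota \colon \D G \into G$ is a local Lie groupoid homomorphism, and as $G$ is already a global groupoid we have $\pair{G} = G$. Hence, by the extension property for local homomorphisms stated above, $\iota$ extends to a unique Lie groupoid homomorphism $\tilde\iota \colon \pair{\D G} \to G$ covering $\Id_M$, restricting to the tautological embedding on the copy of $\D G$ inside $\pair{\D G}$. It suffices to show $\tilde\iota$ is a bijection and a local diffeomorphism; since both $s$-maps are surjective submersions to $M$ and $\tilde\iota$ covers $\Id_M$, I would check these claims on each source fibre $\tilde\iota_x \colon s^{-1}(x)_{\pair{\D G}} \to s^{-1}(x)_G$.

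\textbf{Surjectivity.} First I would prove the standard \emph{generation lemma}: an $s$-connected open neighbourhood of the identity section of an $s$-connected groupoid generates the whole groupoid. Let $H \subset G$ be the set of finite products of elements of $\D G$; it is a subgroupoid containing the identities. For $g \in H$ with $t(g) = y$, the right translation $R_g \colon s^{-1}(y)_G \to s^{-1}(x)_G$ is a diffeomorphism carrying $\D G \cap s^{-1}(y)$ (an open neighbourhood of $\Id_y$) onto an open neighbourhood of $g$ lying in $H$; the same map applied at a point of the complement shows $s^{-1}(x)_G \setminus H$ is open as well. As $s^{-1}(x)_G$ is connected and meets $H$, we get $H = G$. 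Thus every arrow is a finite $\D G$-product, so $\tilde\iota_x$ is onto.

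\textbf{Injectivity.} This is the heart of the matter, and the only place where $s$-\emph{simple}-connectedness is used. A $\D G$-factorisation $g = g_n \cdots g_1$ determines the chain of partial products $\Id_x = p_0, p_1, \dots, p_n = g$ in $s^{-1}(x)_G$, with $p_i = g_i\, p_{i-1}$, so that consecutive terms are ``$\D G$-close'' in the sense that $p_i \in (\D G \cap s^{-1}(t(p_{i-1}))) \cdot p_{i-1}$, an open neighbourhood of $p_{i-1}$. The relations imposed in the colimit defining $\pair{\D G}$ --- composing an adjacent pair lying in $\D^2 G$, and inserting or deleting identities --- are precisely the elementary homotopies of such chains. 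The key technical input I would invoke is the standard correspondence (as in the chain construction of the universal cover) identifying homotopy classes of $\D G$-chains from $\Id_x$ to $g$ with homotopy classes rel endpoints of genuine paths from $\Id_x$ to $g$ in $s^{-1}(x)_G$. Because $G$ is $s$-simply-connected, $s^{-1}(x)_G$ is simply connected, so any two such paths are homotopic; hence any two $\D G$-factorisations of $g$ represent the same class in $\pair{\D G}$, giving injectivity of $\tilde\iota_x$. The hard part is exactly this chain-to-path dictionary: one must show that a homotopy of paths can be subdivided finely enough to be realised by a sequence of elementary chain moves, and conversely that each elementary move preserves the homotopy class --- precisely the point at which the vanishing of $\pi_1(s^{-1}(x)_G)$ is brought to bear.

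\textbf{Conclusion.} Having shown $\tilde\iota$ is a bijective groupoid homomorphism, it remains to see it is smooth with smooth inverse. Near the identity section $\tilde\iota$ is, by construction of the colimit, the open embedding of $\D G$ into $G$, hence a local diffeomorphism there. Since $\tilde\iota$ is a homomorphism it intertwines the right translations $R_g$, which are diffeomorphisms of source fibres; transporting a chart at $\Id$ by $R_g$ produces compatible charts at every point and shows $\tilde\iota$ is a local diffeomorphism everywhere. A bijective local diffeomorphism is a diffeomorphism, so $\tilde\iota \colon \pair{\D G} \iso G$ is an isomorphism of Lie groupoids (in particular confirming that the colimit smooth structure on $\pair{\D G}$ exists and agrees with that of $G$).
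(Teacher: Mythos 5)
Your proposal is correct and follows essentially the same route as the paper: the canonical homomorphism $\pair{\D G}\to G$ induced by the inclusion, surjectivity from $s$-connectedness via a generation argument in each source fibre, and injectivity by converting two factorisations of $g$ into two paths in the (simply connected) source fibre and subdividing the connecting homotopy into elementary moves coming from $\D^2 G$. The single step you defer to a ``standard correspondence'' --- that a sufficiently fine subdivision of the homotopy is realised by relations already present in $\D G$ --- is exactly the point the paper makes explicit, by producing a neighbourhood $W$ of the identity section with $W \tensor[_s]{\x}{_s} W \subset \tau^{-1}(\D^2 G)$ (where $\tau=(\Inv,\Id)$) and choosing a triangulation of the homotopy subordinate to the cover by right-translates of $W$.
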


\note{Picture here?}

\begin{proof}
	The inclusion $\D G \into G$ determines a smooth homomorphism $\gamma : \pair{\D G} \to G$.  Since $G$ is $s$-connected, for any $g \in G$ we may choose a path from the identity to $g$ in through its $s$-fibre.  By partitioning this path, $g$ can be factored into a sequence of ``small'' elements, each lying in $\D G$, and thus $g$ has a preimage in some $\D^{(n)} G$, showing that $\gamma$ is surjective.  Furthermore, one can find a neighbourhood of $g$ which lifts smoothly to $\D^{(n)}$ in this way.
	
	We will argue that $\gamma$ is also injective, i.e., that $g$ has a \emph{unique} lift.  Then $\pair{\D G} \iso G$ as groupoids.  By the local smoothness of the lift, it will also follow that $\pair{\D G} \iso G$ as a \emph{smooth} colimit.
	
	Suppose $g \in G$ is expressed as a product (in $G$) of elements in $\D G$ in two different ways, as $g_n \cdot g_{n-1} \cdot \ldots \cdot g_1$ and $h_m \cdot \ldots \cdot h_1$.  Since $\D G$ is $s$-connected, each $g_i$ or $h_i$ may be joined to the identity via a path in its $s$-fibre.  By composing translations (in $G$) of these paths, we get a pair of paths from $\Id_{s(g)}$ to $g$, one passing through each partial composition $g_i \cdot \ldots\cdot g_1$, and the other passing through each partial composition $h_i \cdot \ldots \cdot h_1$.  Since $G$ is $s$-simply-connected, there is a homotopy between these paths in the $s$-fibre of $g$.  We may triangulate this homotopy (including, as vertices, the partial compositions along either edge), and any such triangulation gives us a step-by-step way of converting the sequence $g_n \cdot \ldots \cdot g_1$ to $h_m \cdot \ldots \cdot h_1$ via relations \emph{in $G$}.  If our triangulation is fine enough, we can hope that each triangle corresponds to a relation in $\D G$, i.e., coming from $\D^2 G$.  We will be precise about this ``fineness'' condition.
	
	We have the map
	\begin{align}
	\tau := (\textrm{Inv},\Id) : G \tensor[_s]{\x}{_s} G \to G \tensor[_t]{\x}{_s} G,
	\end{align}
	giving us the open set $\tau^{-1}(\D^2 G)$ consisting of all pairs $(g,h)$, $s(g)=s(h)$, such that the product $h\cdot g^{-1}$ is defined in $\D G$.  Since $\Id \x_M \Id \subset \tau^{-1}(\D^2 G)$, we can find a ``square'' around the identity in $\tau^{-1}(\D^2 G)$, i.e., we can find an open neighbourhood $W \subset \D G$ of $\Id$ such that $W \tensor[_s]{\x}{_s} W \subset \tau^{-1}(\D^2 G)$.  Then for any $g,h \in W$, $h\cdot g^{-1}$ is defined in $\D G$.  In other words, for any relation $g_2 \cdot g_1 = h$ in $G$, if $g_1$ and $h$ are both in $W$, then, in $\D G$, $g_2 = h\cdot g_1^{-1}$, and so $g_2 \cdot g_1 = h$ also.
	
	All possible right-translations (in $G$) of $W$ give an open cover of the homotopy surface, and there exists a triangulation subordinate to this cover, i.e., where every triangle lies in a translation of $W$.  One can then see that every triangle is realized by a relation in $\D G$.
\end{proof}

\begin{prop}\label{local structure gives global structure}
	If $\D G \toto M$ is a local Lie groupoid with smooth formal completion $\pair{\D G}$, then a multiplicative form on $\D G$ extends to $\pair{\D G}$ multiplicatively in a unique way.
\end{prop}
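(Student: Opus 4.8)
The plan is to let multiplicativity dictate the extension and then check that the resulting formula is consistent with the colimit relations defining $\pair{\D G}$. Write $\theta$ for the given multiplicative form, and let $c_n : \D G^{(n)} \to \pair{\D G}$ be the concatenation map sending a composable sequence to its product in $\pair{\D G}$. Iterating the multiplicativity identity $m^*(\theta) = \b{p}_1^*(\theta) + \b{p}_2^*(\theta)$, any multiplicative extension $\tilde\theta$ is forced to satisfy
\[
c_n^*(\tilde\theta) = \eta_n := \sum_{i=1}^n \b{p}_i^*(\theta),
\]
where $\b{p}_i : \D G^{(n)} \to \D G$ is the $i$-th projection. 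Because $\pair{\D G}$ is by hypothesis the smooth colimit of the diagram of the $\D G^{(n)}$, the maps $c_n$ are jointly surjective and locally admit smooth sections (as in the lifting argument of Lemma \ref{local generates global}); hence this formula pins down $\tilde\theta$ uniquely, which already establishes the uniqueness clause.

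For existence I would define $\tilde\theta$ locally by $\tilde\theta = \sigma^*(\eta_n)$ for a local section $\sigma$ of some $c_n$, and verify that this is independent of all choices. Two such local descriptions differ by a finite chain of the structure maps of the colimit diagram --- the inclusions $\wt{\D G}^{(n)}_i \into \D G^{(n)}$ and the local compositions $m^n_i : \wt{\D G}^{(n+1)}_i \to \D G^{(n)}$ --- so it suffices to check that the family $\{\eta_n\}$ is invariant along these. Invariance under the inclusions is immediate. Invariance under $m^n_i$ is the crux: regarding $\wt{\D G}^{(n+1)}_i$ inside $\D G^{(n+1)}$ via the inclusion $\iota$, one compares $\iota^*(\eta_{n+1})$ with $(m^n_i)^*(\eta_n)$. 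For every slot $j \notin \{i,i+1\}$ the corresponding summands agree term by term, while the single summand of $(m^n_i)^*(\eta_n)$ coming from the composed slot is the pullback of $m^*(\theta)$ along the projection to the $i$-th and $(i+1)$-th entries (which lie in $\D^2 G$ on $\wt{\D G}^{(n+1)}_i$). By multiplicativity of $\theta$ this equals $\b{p}_i^*(\theta) + \b{p}_{i+1}^*(\theta)$, exactly matching the two corresponding summands of $\iota^*(\eta_{n+1})$. Thus $(m^n_i)^*(\eta_n) = \iota^*(\eta_{n+1})$, the family $\{\eta_n\}$ descends, and $\tilde\theta$ is a well-defined smooth form on $\pair{\D G}$.

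Finally I would confirm that $\tilde\theta$ is itself multiplicative. The product in $\pair{\D G}$ is concatenation of sequences, so on composable $x = c_n(g_n,\dots,g_1)$ and $y = c_m(h_m,\dots,h_1)$ the defining formula yields $\eta_{m+n}$ on the concatenation, which splits precisely as the pullback of $\eta_n$ plus the pullback of $\eta_m$; this is the identity $m^*(\tilde\theta) = \b{p}_1^*(\tilde\theta) + \b{p}_2^*(\tilde\theta)$ on $\pair{\D G} \tensor[_t]{\x}{_s} \pair{\D G}$. The substantive point throughout is the compatibility computation of the previous paragraph: it is the one place where the multiplicativity hypothesis on $\theta$ is used, and it is what guarantees that the forced local formula actually glues to a global form rather than merely being imposed chart by chart. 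Everything else --- surjectivity and local sections of the $c_n$, and the descent of forms along the structure maps --- is formal given the hypothesis that $\pair{\D G}$ is a smooth colimit.
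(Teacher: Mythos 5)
Your proposal is correct and follows essentially the same route as the paper: both derive the forced formula $F_n = \sum_{i=1}^n \b{p}_i^*(\theta)$ on $\D G^{(n)}$ from iterated multiplicativity, and both reduce existence to checking compatibility of this family with the local composition maps $m^n_i$ in the colimit diagram, which is exactly where local multiplicativity of $\theta$ on $\D^2 G$ enters. You spell out the term-by-term comparison and the final multiplicativity check a bit more explicitly than the paper does, but there is no difference in substance.
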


\begin{proof}
	From the colimit definition, a form $F$ on $\pair{\D G}$ consists of forms $F_n$ on each $\D G^{(n)}$ which respect pullbacks through each $m^n_i$.
	
	Recall that a form is multiplicative if $m^*(F) = \b{p}_1^*(F) + \b{p}_2^*(F)$.  In $\pair{\D G}$, multiplication is concatenation of sequences, so it follows that if $F$ is to be multiplicative then, necessarily,
	\begin{align}
	F_n = \sum_{i=1}^n \b{p}_i^*(F_1).
	\end{align}
	If $F_1$ is \emph{locally} multiplicative, then we can check that, indeed, $(m^n_i)^*(F_n) = F_{n+1}$.  The form $F$ thus defined is the unique multiplicative extension of $F_1$.
\end{proof}

In the above result, we can often say something about the properties of the extension form:
\begin{lem}\label{local symplectic means global symplectic}
	If a multiplicative 2-form $F_0$ on $\D G$ is (holomorphic) symplectic then its extension to $\pair{\D G}$ is (holomorphic) symplectic.
\end{lem}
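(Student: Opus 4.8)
The plan is to unpack ``(holomorphic) symplectic'' into the two conditions of closedness and nondegeneracy for the extended form $F$ furnished by Proposition \ref{local structure gives global structure}, and to verify each separately. Closedness is immediate: in the colimit description $F$ pulls back on each $\D G^{(n)}$ to $F_n = \sum_{i=1}^n \b{p}_i^*(F_0)$, and since $d$ commutes with pullback and $F_0$ is closed, each $F_n$ is closed; as closedness is a local condition this forces $dF = 0$. In the holomorphic case the same computation, run in the holomorphic category (the $\b{p}_i$ and the completion maps are holomorphic and $F_0$ is a closed holomorphic $2$-form), shows $F$ is again a closed holomorphic $2$-form. Thus the real content of the lemma is the nondegeneracy of $F$.

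For nondegeneracy I would argue that it propagates along the groupoid product. Every arrow of $\pair{\D G}$ is, by construction, a finite product $g = g_n \cdots g_1$ of elements $g_i \in \D G$, where $F$ restricts to the nondegenerate $F_0$. By induction on the length of such a factorization it suffices to prove the following: if $F$ is nondegenerate at two composable arrows $g_2, g_1$ (with $s(g_2)=t(g_1)$), then it is nondegenerate at the product $g := g_2 g_1$. Here the key tools are that $F$ is multiplicative on all of $\pair{\D G}$ (Proposition \ref{local structure gives global structure}), i.e., $m^*F = \b{p}_1^*F + \b{p}_2^*F$ on $\pair{\D G} \fx{s}{t} \pair{\D G}$, together with the fact that a multiplicative nondegenerate form forces $\dim \D G = 2\dim M$, so that $\ker ds$ and $\ker dt$ are each of dimension $\dim M$ and, by Lemma \ref{symp orth}, mutual symplectic-orthogonal complements.

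The heart of the argument is a kernel chase. Let $w \in \ker F_g$. Since $m$ is a submersion, write $w = dm(v_2, v_1)$ with $s_*v_2 = t_* v_1$; multiplicativity gives, for every composable pair $(u_2,u_1)$,
\[
F_{g_2}(v_2, u_2) + F_{g_1}(v_1, u_1) = F_g\big(w,\, dm(u_2,u_1)\big) = 0 .
\]
Testing with $u_1 = 0$ (so $u_2 \in \ker ds_{g_2}$) and with $u_2 = 0$ (so $u_1 \in \ker dt_{g_1}$), and invoking Lemma \ref{symp orth}, I obtain $v_2 \in (\ker ds_{g_2})^{\perp} = \ker dt_{g_2}$ and $v_1 \in (\ker dt_{g_1})^{\perp} = \ker ds_{g_1}$. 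Since $\ker dm$ consists exactly of the pairs $(-dL_{g_2}\xi,\, dR_{g_1}\xi)$ with $\xi \in \Lie(G)_{t(g_1)}$, and $dR_{g_1}$ carries $\Lie(G)_{t(g_1)}$ isomorphically onto $\ker ds_{g_1}$, I may subtract the unique such element matching $v_1$, reducing to the case $v_1 = 0$ with $(v_2', 0)$, $v_2' \in \ker ds_{g_2} \cap \ker dt_{g_2}$, and unchanged image $w$. Now the composability condition places no constraint on the test vector $u_2$ (because $t$ is a submersion, every value $s_* u_2$ is realized as some $t_* u_1$), so the displayed identity yields $F_{g_2}(v_2', u_2) = 0$ for all $u_2 \in T_{g_2}\D G$; nondegeneracy of $F_0$ at $g_2$ forces $v_2' = 0$, hence $w = 0$.

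The main obstacle is precisely this propagation step, and within it the delicate point is disentangling the fibrewise kernels from the composability constraint built into the fibre product. The two ingredients that make it close are Lemma \ref{symp orth} (to identify $(\ker ds)^{\perp}$ with $\ker dt$ and so locate $v_1,v_2$ in the source and target fibres) and the submersivity of $t$ (to free the test vector once $v_1$ has been absorbed into $\ker dm$); both are available in the present setting. The holomorphic version requires no new idea, since the identical chase runs verbatim over $\C$ with $F_0$ a holomorphic symplectic form.
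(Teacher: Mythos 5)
Your argument for the real symplectic case is correct, but it takes a genuinely different route from the paper's. The paper does not separate closedness from nondegeneracy at all: it observes that through each factor $g_i \in \D G$ there is a local bisection which is (holomorphic) Lagrangian for $F_0$, that multiplication by the product of these bisections is a diffeomorphism from a neighbourhood of $\Id_{s(g)}$ onto a neighbourhood of $g = g_n\cdots g_1$, and that multiplicativity together with isotropy of the bisections forces this diffeomorphism to preserve $F$; hence every local property of $F$ near the identity section propagates to all of $\pair{\D G}$. You replace this translation argument by a direct computation of the radical of $F$ at a product: using $m^*F = \b{p}_1^*F+\b{p}_2^*F$, the $s$--$t$ symplectic orthogonality of Lemma \ref{symp orth}, and the description of $\ker dm$, you show that the radical of $\b{p}_1^*F+\b{p}_2^*F$ on the fibre product lies inside $\ker dm$, so nondegeneracy is preserved under composition and spreads by induction on the length of a factorization. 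This is more self-contained (no Lagrangian bisections need to be produced), at the modest cost of invoking Lemma \ref{symp orth} pointwise at each stage of the induction, where it must be rederived from multiplicativity since that lemma is stated for globally symplectic groupoids; your chase is otherwise sound.

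The gap is in the holomorphic case, which is the case the paper actually needs in Theorem \ref{construction of holomorphic localization}. ``Holomorphic symplectic'' here does not merely mean that the closed complex $2$-form $\Omega = B + i\omega$ is nondegenerate over $\C$: by Remark \ref{hol symp implies complex}, the pair $(B,\omega)$ is required to determine a complex structure $I = \omega^{-1}B$ on the groupoid, i.e.\ to satisfy pointwise algebraic identities such as $(\omega^{-1}B)^2 = -1$ (integrability of $I$ then following from closedness), and $\pair{\D G}$ carries no a priori complex structure for your phrase ``runs verbatim over $\C$'' to refer to. Your kernel chase controls only the radical of the extended bilinear form; it does not show that these algebraic relations between the extended $B$ and $\omega$ continue to hold away from $\D G$, and nondegeneracy of $B+i\omega$ alone does not imply them. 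The paper's bisection argument gets this for free, since the translating diffeomorphism carries the whole package $(B,\omega,I)$ from the identity section to an arbitrary point. To close your proof you would need a separate propagation step for the pointwise identity $(\omega^{-1}B)^2=-1$ under multiplication, or else revert to the translation argument for the holomorphic case.
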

\begin{proof}
An element $g \in \pair{\D G}$ is a formal composition of elements $g_1,g_2,\ldots$ in $\D G$.  Through each $g_i$ there exist local bisections which are (holomorphic) Lagrangian for $F_0$.  The product of such a collection of bisections determines a diffeomorphism from a neighbourhood of $\Id_{s(g)}$ to a neighbourhood of $g$.  A calculation shows that, since these bisections are (holomorphic) Lagrangian and $F$ is multiplicative, this diffeomorphism preserves $F$.
\end{proof}
\longversion{
\begin{lem}\label{local symplectic means global symplectic}
If a multiplicative 2-form on a Lie groupoid is (holomorphic) symplectic near the identity then it is (holomorphic) symplectic everywhere.
\end{lem}
\begin{proof}
A closed multiplicative 2-form $F$ on a Lie groupoid $G$ admits, through any point $g \in G$, an isotropic local bisection $\sigma$.  Multiplication by $\sigma$ takes a neighbourhood of $\Id_{s(g)}$ to a neighbourhood of $g$, and since $\sigma$ is isotropic and $F$ is multiplicative, $F$ is preserved by this map.  But then any local properties of $F$ about $\Id_{s(g)}$ also hold about $g$.
\end{proof}
}

\subsection{Localized local Lie groupoids}

Given an open cover, $\sr{U}$, of $M$, the localization construction of Section \ref{localizations} may be applied without modification to a local Lie groupoid $\D G \toto M$, giving a local Lie groupoid $\D_\sr{U} G \toto \bigsqcup \sr{U}$.  We could call this a \emph{localized local Lie groupoid}.  $\D_\sr{U} G$ and its product are defined, not only in a neighbourhood of the identity, but in a neighbourhood of the \v{C}ech groupoid, $\Id_\sr{U}$.  Since a localized local Lie groupoid is just a certain kind of local Lie groupoid, it generates a formal completion as described above.

As before, quotienting by the \v{C}ech groupoid cancels the localization, so that $G = G_\sr{U} / \Id_\sr{U}$, $\D G = \D_\sr{U} G / \Id_\sr{U}$ and $\pair{\D G} = \pair{\D_\sr{U} G} / \Id_\sr{U}$.

\begin{lem}\label{localized local generates localized global}
	If $G \toto M$ is a Lie groupoid, $\D G \subset G$ is a local Lie groupoid such that $\pair{\D G} \iso G$, and $\sr{U}$ is an open cover of $M$, then
	$$\pair{\D_\sr{U} G} \iso G_\sr{U}.$$
\end{lem}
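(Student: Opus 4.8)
My plan is to prove the statement by showing that formal completion commutes with localization, i.e.\ that the natural homomorphism $\pair{\D_\sr{U} G}\to G_\sr{U}$ is an isomorphism, realizing both sides as the pullback of $G\iso\pair{\D G}$ along the cover. First I would recall that localization is pullback along $p\colon X=\bigsqcup_i U_i\to M$, so that an arrow of $G_\sr{U}$ is a triple $(j,g,i)$ with $g\in G$, $s(g)\in U_i$ and $t(g)\in U_j$. The inclusion $\D_\sr{U}G\into G_\sr{U}$ is a local homomorphism, hence (since $\D_\sr{U}G$ is a localized local Lie groupoid) extends to a unique homomorphism $\gamma_\sr{U}\colon\pair{\D_\sr{U}G}\to G_\sr{U}$, which sends the class of a composable index--sequence $(j_n,g_n,i_n),\dots,(j_1,g_1,i_1)$ (with $i_{k+1}=j_k$) to $(j_n,\,\psi[g_n\cdots g_1],\,i_1)$, where $[g_n\cdots g_1]\in\pair{\D G}$ is the concatenation and $\psi\colon\pair{\D G}\isoto G$ is the given isomorphism. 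The key structural observation is that $\gamma_\sr{U}$ is $\Id_\sr{U}$--equivariant and, upon quotienting by $\Id_\sr{U}$ (using $\pair{\D G}=\pair{\D_\sr{U}G}/\Id_\sr{U}$ and $G=G_\sr{U}/\Id_\sr{U}$), descends to $\psi$. The task then reduces to proving that $\gamma_\sr{U}$ is a bijection and a local diffeomorphism.

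Surjectivity I would obtain directly from the covering property of $\sr{U}$: given $(j,g,i)\in G_\sr{U}$, write $\psi^{-1}(g)\in\pair{\D G}$ as a composable sequence $g_n\cdots g_1$ of elements of $\D G$, choose for each interior point $t(g_k)$ some index $\ell_k$ with $t(g_k)\in U_{\ell_k}$, and set the outer indices to $i$ and $j$. The resulting index--sequence lies in $\D_\sr{U}G$ and maps to $(j,g,i)$ under $\gamma_\sr{U}$.

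The hard part, and the main obstacle, will be injectivity: one must verify that the equivalence relation defining the formal completion lifts \emph{coherently} to the localized groupoid, independently of how interior indices are chosen. Suppose two index--sequences share the same image $(j,g,i)$. Then their outer indices agree, and their underlying index--free sequences both represent $\psi^{-1}(g)$, so they are joined by a finite zigzag of elementary moves in $\pair{\D G}$, each replacing an adjacent pair $(g_{k+1},g_k)\in\D^2G$ by its product $g_{k+1}g_k$ (or the reverse split). A merge lifts verbatim to a local composition in $\D_\sr{U}G$, which simply erases the interior index $i_{k+1}=j_k$; a split introduces a new interior point $t(g_k)$ requiring an index choice, but any two admissible choices $a,b$ satisfy $(j_{k+1},g_{k+1},a)\cdot(a,g_k,i_k)=(j_{k+1},g_{k+1}g_k,i_k)=(j_{k+1},g_{k+1},b)\cdot(b,g_k,i_k)$ in $\D_\sr{U}G$, hence yield the same element of $\pair{\D_\sr{U}G}$ after composing. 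This index--independence is exactly what makes the lift well defined, so the entire zigzag lifts to $\pair{\D_\sr{U}G}$ and connects the two given classes, proving injectivity. Finally, as in the proof of Lemma~\ref{local generates global}, a neighbourhood of any point of $G_\sr{U}$ lifts smoothly through the colimit defining $\pair{\D_\sr{U}G}$, so $\gamma_\sr{U}$ is a local diffeomorphism with smooth inverse; therefore $\pair{\D_\sr{U}G}\iso G_\sr{U}$ as Lie groupoids.
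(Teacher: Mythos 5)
Your route is genuinely different from the paper's: there, the canonical map $\gamma : \pair{\D_\sr{U} G} \to G_\sr{U}$ is placed into an equivalence of short exact sequences $\Id_\sr{U} \to \pair{\D_\sr{U} G} \to \pair{\D G}$ and $\Id_\sr{U} \to G_\sr{U} \to G$, with the outer maps isomorphisms, and the conclusion is a one-line diagram chase. You record exactly the equivariance and descent that make that argument work, but then set it aside in favour of a direct element-level verification. That is a legitimate choice, and your surjectivity and smoothness steps are fine, but the injectivity step has a gap.

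The gap is that your zigzag lift only controls interior indices that are \emph{created} at split steps. Take two preimages of $(j,g,i)$ whose underlying index-free sequences are identical, say $\tilde A = \big((j,g_2,a),(a,g_1,i)\big)$ and $\tilde B = \big((j,g_2,b),(b,g_1,i)\big)$ with $a \neq b$ and $(g_2,g_1) \notin \D^2 G$. The zigzag in $\pair{\D G}$ joining the index-free sequences may be empty (and in general leaves some interior positions untouched), so lifting it from $\tilde A$ returns $\tilde A$, not $\tilde B$; your choice-independence identity does not apply here because it presupposes that the adjacent pair is composable in $\D G$. So the argument as written does not show $[\tilde A] = [\tilde B]$ in $\pair{\D_\sr{U} G}$. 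The repair is to insert \v{C}ech identities, which is precisely the mechanism the paper's exact-sequence proof packages: since $\D^2 G \supseteq \Id \x_M \D G$, one has the local compositions $(b,g_1,i) = (b,\Id_{t(g_1)},a)\cdot(a,g_1,i)$ and $(j,g_2,a) = (j,g_2,b)\cdot(b,\Id_{t(g_1)},a)$ in $\D_\sr{U} G$, whence $[\tilde A] = [(j,g_2,b)]\cdot[(b,\Id_{t(g_1)},a)]\cdot[(a,g_1,i)] = [\tilde B]$. With this observation the class of a decorated sequence in $\pair{\D_\sr{U} G}$ depends only on its outer indices and on the class of its index-free sequence in $\pair{\D G}$, which yields injectivity cleanly and subsumes your split-step bookkeeping.
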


In particular, by Lemma \ref{local generates global}, this holds whenever $G$ is $s$-connected and $s$-simply-connected, and $\D G \subset G$ is $s$-connected.

\begin{proof}
	We have a canonical map $\gamma : \pair{\D_\sr{U} G} \to G_\sr{U}$ fitting into the following equivalence of short exact sequences:
	\begin{diagram}[width=2em,height=2em]
		0 & \rTo & \Id_\sr{U} & \rTo & \pair{\D_\sr{U} G} & \rTo & \pair{\D G} & \rTo & 0 \\
		& & \dTo>{\rotatebox{90}{$\sim$}} & & \dTo^\gamma & & \dTo>{\rotatebox{90}{$\sim$}} & & \\
		0 & \rTo & \Id_\sr{U} & \rTo & G_\sr{U} & \rTo & G & \rTo & 0
	\end{diagram}
	Therefore, $\gamma$ is also an isomorphism.
\end{proof}

Finally, by combining Lemma \ref{localized local generates localized global} (with the remark) and Lemma \ref{local symplectic means global symplectic}, we get the result used in Section \ref{integration}:  
\begin{lem}\label{appendix lemma}
Let $G$ be an $s$-connected, $s$-simply-connected Lie groupoid, let $G_\sr{U}$ be its localization with respect to an open cover $\sr{U}$, let $\D G \subset G$ be an $s$-connected neighbourhood of $\Id_G$, and let $\D_\sr{U} G \subset G_\sr{U}$ be the neigbourhood of the \v{C}ech groupoid which localizes $\D G$.  Then,
	\begin{enumerate}
		\item A multiplicative form on $\D_\sr{U} G$ has a unique extension to a multiplicative form on $G_\sr{U}$. \label{lem part 1}
		\item If this form is (holomorphic) symplectic on $\D G$ then it is (holomorphic) symplectic on $G$. \label{lem part 2}
	\end{enumerate}
\end{lem}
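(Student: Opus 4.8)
The plan is to read this as a corollary that simply assembles the three preceding results of the appendix---Proposition \ref{local structure gives global structure}, Lemma \ref{localized local generates localized global}, and Lemma \ref{local symplectic means global symplectic}---since all of the genuine work has already been done in proving those. The only task here is to check that their hypotheses are satisfied and to compose them in the correct order.

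First I would observe that $\D_\sr{U} G$, being a localization of the local Lie groupoid $\D G$, is itself a (localized) local Lie groupoid, so the entire formal-completion machinery of this appendix applies to it verbatim, with $\D_\sr{U} G$ playing the role of $\D G$. Because $G$ is $s$-connected and $s$-simply-connected and $\D G \subset G$ is $s$-connected, Lemma \ref{local generates global} gives $\pair{\D G} \iso G$; feeding this into Lemma \ref{localized local generates localized global} (precisely the situation of its concluding remark) yields the crucial identification
\begin{align*}
\pair{\D_\sr{U} G} \iso G_\sr{U}.
\end{align*}
In particular $\pair{\D_\sr{U} G}$ is smooth, being isomorphic to the manifold $G_\sr{U}$.

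With the smoothness of the formal completion in hand, part \ref{lem part 1} is immediate: Proposition \ref{local structure gives global structure} states that a multiplicative form on $\D_\sr{U} G$ extends in a unique multiplicative way to $\pair{\D_\sr{U} G}$, which under the identification above is exactly $G_\sr{U}$. For part \ref{lem part 2}, I would again invoke the fact that $\D_\sr{U} G$ is an ordinary local Lie groupoid, so Lemma \ref{local symplectic means global symplectic} applies without change: if the multiplicative form is (holomorphic) symplectic on a neighbourhood of the \v{C}ech groupoid, then its extension is (holomorphic) symplectic on all of $\pair{\D_\sr{U} G} \iso G_\sr{U}$. The mechanism, as in that lemma, is that multiplication by local (holomorphic) Lagrangian bisections through an arbitrary point is a diffeomorphism from a neighbourhood of some $\Id_x$ that preserves the multiplicative form, thereby transporting nondegeneracy from the identity section to every point.

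The main obstacle is not any fresh computation but the smoothness requirement buried in Proposition \ref{local structure gives global structure}: that extension result is only available once $\pair{\D_\sr{U} G}$ is known to be a manifold. This is exactly why the identification $\pair{\D_\sr{U} G} \iso G_\sr{U}$ must be secured first---it simultaneously furnishes the smooth structure needed to apply the extension proposition and identifies the target of the extension with the groupoid $G_\sr{U}$ we actually care about. Once that ordering is respected, both parts follow with no further argument.
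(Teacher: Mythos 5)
Your proposal is correct and matches the paper's own argument, which likewise derives the lemma by combining Lemma \ref{localized local generates localized global} (together with the remark invoking Lemma \ref{local generates global}) with Proposition \ref{local structure gives global structure} and Lemma \ref{local symplectic means global symplectic}. The ordering you emphasize---securing $\pair{\D_\sr{U} G} \iso G_\sr{U}$ first so that the smoothness hypothesis of the extension proposition is met---is exactly the intended logic.
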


\bibliographystyle{hyperamsplain-nodash}
\bibliography{references.bib}{}

\end{document}